
\documentclass[10pt]{amsart}
\usepackage{amsmath}
\usepackage{amssymb}
\usepackage{mathtools}      
\usepackage{mathabx} 
\usepackage{enumitem}
\usepackage{amsthm}
\usepackage{thmtools}
\usepackage{tikz,pgfplots}
\pgfplotsset{compat=1.18} 
\usetikzlibrary{decorations.text}
\usetikzlibrary{arrows.meta}
\usepackage[font=small]{caption}
\usepackage[colorlinks=true,backref=page,bookmarksopen=true]{hyperref} 
\usepackage[capitalise]{cleveref} 

\usepackage{xcolor}
\hypersetup{
    colorlinks,
    linkcolor={red!50!black},
    citecolor={blue!50!black},
    urlcolor={blue!80!black}
}

\renewcommand*{\backref}[1]{}
\renewcommand*{\backrefalt}[4]{\quad \tiny 
  \ifcase #1 ({\color{red} \bf NOT CITED.})%
  \or    (Cited on p.~#2.)%
  \else   (Cited on pp.~#2.)%
  \fi}
  


\declaretheorem[numberwithin=section]{theorem} 
\declaretheorem[sibling=theorem]{proposition} 

\declaretheorem[sibling=theorem]{lemma}

\declaretheorem[sibling=theorem]{question}
\declaretheorem[sibling=theorem, style=remark]{remark}

\declaretheorem[sibling=theorem, style=definition]{definition}


\setcounter{tocdepth}{1}    
\setcounter{secnumdepth}{3}
\hypersetup{bookmarksdepth = 3} 
\numberwithin{equation}{section}

\setlist[enumerate,1]{label={\upshape \arabic*.},ref=\arabic*}

\newcommand{\N}{\mathbb{N}} \newcommand{\Z}{\mathbb{Z}}  \newcommand{\R}{\mathbb{R}} 

\newcommand{\st}{\;\mathord{:}\;}

\DeclareMathOperator{\diam}{diam}

\renewcommand{\setminus}{\smallsetminus}
\renewcommand{\emptyset}{\varnothing}


\newcommand{\arxiv}[2]{\href{http://arxiv.org/abs/#1}{arXiv: {#1} [{#2}]}}
\newcommand{\doi}[1]{\href{http://doi.org/#1}{\tt doi}} 
\newcommand{\directlink}[1]{\href{#1}{\tt URL}}
\newcommand{\MRev}[1]{\href{https://mathscinet.ams.org/mathscinet-getitem?mr=#1}{\tt MR}} 
\newcommand{\Zbl}[1]{\href{https://zbmath.org/?q=an:#1}{\tt Zbl}} 

\DeclareFontFamily{U} {MnSymbolA}{} 
\DeclareFontShape{U}{MnSymbolA}{m}{n}{
   <-6> MnSymbolA5
   <6-7> MnSymbolA6
   <7-8> MnSymbolA7
   <8-9> MnSymbolA8
   <9-10> MnSymbolA9
   <10-12> MnSymbolA10
   <12-> MnSymbolA12}{}
\DeclareFontShape{U}{MnSymbolA}{b}{n}{
   <-6> MnSymbolA-Bold5
   <6-7> MnSymbolA-Bold6
   <7-8> MnSymbolA-Bold7
   <8-9> MnSymbolA-Bold8
   <9-10> MnSymbolA-Bold9
   <10-12> MnSymbolA-Bold10
   <12-> MnSymbolA-Bold12}{}
\DeclareSymbolFont{MnSyA} {U} {MnSymbolA}{m}{n}
\DeclareFontFamily{U} {MnSymbolC}{}
\DeclareFontShape{U}{MnSymbolC}{m}{n}{
  <-6> MnSymbolC5
  <6-7> MnSymbolC6
  <7-8> MnSymbolC7
  <8-9> MnSymbolC8
  <9-10> MnSymbolC9
  <10-12> MnSymbolC10
  <12-> MnSymbolC12}{}
\DeclareFontShape{U}{MnSymbolC}{b}{n}{
  <-6> MnSymbolC-Bold5
  <6-7> MnSymbolC-Bold6
  <7-8> MnSymbolC-Bold7
  <8-9> MnSymbolC-Bold8
  <9-10> MnSymbolC-Bold9
  <10-12> MnSymbolC-Bold10
  <12-> MnSymbolC-Bold12}{}
\DeclareSymbolFont{MnSyC} {U} {MnSymbolC}{m}{n}

\DeclareMathSymbol{\smallplus}{\mathord}{MnSyC}{20} 
\DeclareMathSymbol{\smallpm}{\mathord}{MnSyC}{22} 

\begin{document}

\title{Monochromatic nonuniform hyperbolicity}
\author{Jairo Bochi}
\address{Department of Mathematics, The Pennsylvania State University}
\email{\href{mailto:bochi@psu.edu}{bochi@psu.edu}}
\date{September 12, 2025}
\subjclass[2020]{37D25}

\begin{abstract}
We construct examples of continuous $2$-dimensional linear cocycles which are not uniformly hyperbolic despite having the same non-zero Lyapunov exponents with respect to all invariant measures. The base dynamics can be any non-trivial subshift of finite type. According to a theorem of DeWitt--Gogolev and Guysinsky, such cocycles cannot be H\"older-continuous. Our construction uses the nonuniformly hyperbolic cocycles discovered by Walters in 1984.
\end{abstract}

\maketitle

\section{Introduction}

\subsection{Regularity thresholds}

Qualitative and quantitative properties of dynamical systems depend crucially on regularity, and  rough systems may display peculiar behavior. 
For example, as discovered by Denjoy almost a hundred years ago, there exist non-minimal circle $C^1$-diffeomorphisms without periodic points, but such examples cannot be made of class $C^2$ (see e.g.\ \cite[Chap.~12]{KatokH}).

On the other hand, several perturbative techniques, such as the Closing Lemma, are known only in low regularity. As a result, the goal of describing properties of ``typical'' or ``generic'' dynamical systems becomes harder if restricted to high regularity. 

H\"older continuity is a natural and frequent regularity assumption in hyperbolic dynamics: see \cite[{\S}6.5.e]{HKatok} for discussion. 
Exotic phenomena can be found below the H\"older threshold: see \cite{Bowen,RobY,Pugh,Quas,AvilaB,JenkMorris,BCS,Kocsard,BMOS,Kosloff,Bochi_isolated} for examples.

This paper is motivated by a result of DeWitt and Gogolev \cite{DWG} and Guysinsky \cite{Guy}. Their theorem concerns H\"older-continuous linear cocycles over hyperbolic base dynamics and provides a strong and uniform conclusion (existence of a dominated splitting) based on a relatively weak and nonuniform hypothesis (narrow Lyapunov spectrum). In this paper, we construct examples showing that the H\"older regularity assumption cannot be dropped in the DeWitt-Gogolev-Guysinsky result.

\subsection{Setting and terminology}
Suppose that $X$ is a compact metric space and $T \colon X \to X$ is a homeomorphism.
If $G$ is a topological group and $A \colon X \to G$ is a continuous map,
then the pair $(T,A)$ is called a \emph{continuous $G$-cocycle}, or simply, a \emph{$G$-cocycle}.
Associated to the pair $(T,A)$ we have a unique map $(x,n) \in X \times \Z \mapsto  A^{(n)}(x) \in G$ 
that satisfies the following conditions:
\begin{equation}\label{e.cocycle_eq}
A^{(1)}(x) = A(x) \, , \quad 
A^{(m+n)}(x) = A^{(m)}(T^n x) A^{(n)}(x) \, .
\end{equation}
Some authors call this map a cocycle (see \cite[{\S}1.3.k]{HKatok}).
When $G$ is a matrix group, we speak of \emph{linear cocycles}.

Let us recall some basic facts about linear cocycles.
We restrict our discussion to the case where $G$ is the group $\mathrm{SL}^\smallpm(2,\R)$ of 
real $2 \times 2$ matrices with determinant~$\pm 1$. 
Proofs of all facts stated below can be found for example in \cite[\S~3.8]{DamanikF}. 

Let $(T,A)$ be $\mathrm{SL}^\smallpm(2,\R)$-cocycle.
Let $\mu$ be a Borel probability measure on $X$ which is invariant and ergodic with respect to $T$.
Then there exists a nonnegative number $\lambda(T,A,\mu)$, called the \emph{(top) Lyapunov exponent} of the cocycle $(T,A)$ with respect to the measure $\mu$, such that
\begin{equation}
\lim_{n \to \pm \infty} \frac{1}{n} \log \|A^{(n)}(x) \| = \lambda(T,A,\mu)  \quad \text{for $\mu$-almost every $x$} \, .
\end{equation}
The \emph{Lyapunov spectrum} of the cocycle is defined as the set 
\begin{equation}\label{e.spec}
	\Lambda(T,A) \coloneqq \big\{\lambda(T,A,\mu) \st \mu \text{ is ergodic} \big\} \, .
\end{equation}

An ergodic measure is called \emph{hyperbolic} if $\lambda(T,A,\mu)>0$.
In that case, for $\mu$-almost every $x$, there exists a unique \emph{Oseledets splitting} of $\R^2$ as a sum $E^\mathrm{u}(x) \oplus E^\mathrm{s}(x)$ of one-dimensional spaces such that $\frac{1}{n} \log \|A^{(n)}(x) v \|$ converges as $n \to \pm \infty$ to $\lambda(T,A,\mu)$ for all nonzero vectors $v$ in $E^\mathrm{u}(x)$, and to $-\lambda(T,A,\mu)$ for all nonzero vectors in $E^\mathrm{s}(x)$.
The number $-\lambda(T,A,\mu)$ is the \emph{bottom Lyapunov exponent}.
The Oseledets splitting depends measurably on the point, and is invariant in the sense that 
\begin{equation}
A(x) (E^\mathrm{u}(x)) = E^\mathrm{u}(Tx) \, , \quad
A(x) (E^\mathrm{s}(x)) = E^\mathrm{s}(Tx) \, .
\end{equation}
The original reference is \cite{Oseledets}.

We say that the cocycle $(T,A)$ is \emph{uniformly hyperbolic} if the norms $\|A^{(n)}(x) \|$ grow exponentially in a uniform way, that is,
\begin{equation}
\liminf_{n \to \pm \infty} \inf_{x\in X} \frac{1}{|n|} \log \|A^{(n)}(x) \| > 0 \, .
\end{equation}
In that case, for every point $x\in X$, there exists a \emph{hyperbolic splitting} $\R^2 = E^\mathrm{u}(x) \oplus E^\mathrm{s}(x)$ such that
vectors in $E^\mathrm{s}(x)$ (resp.\ $E^\mathrm{u}(x)$) are uniformly contracted under positive (resp.\ negative) iterates, that is, 
\begin{equation}
\limsup_{n \to + \infty} \sup_{x \in X} \frac{1}{n} 
\log \max \left\{ 
\big\| A^{(n)}(x)|_{E^\mathrm{s}(x)} \big\| , \ 
\big\| A^{(-n)}(x)|_{E^\mathrm{u}(x)} \big\|
\right\} 
< 0 \, .
\end{equation}
The hyperbolic splitting is continuous and invariant.
Furthermore, it coincides $\mu$-almost everywhere with the Oseledets splitting, for any ergodic probability measure~$\mu$.

\medskip

We remark that all concepts introduced so far make sense in arbitrary dimension, but we will refrain from providing details. 
In general, there are several Lyapunov exponents, and the measure is called \emph{hyperbolic} if none of them is zero. Such measures play an important role in differentiable dynamics: see \cite{BP}. Uniform hyperbolicity was introduced in differentiable dynamics by Anosov and Smale in the 1960s.
A uniformly hyperbolic splitting is always \emph{dominated} in the sense of \cite[{\S}B.1]{BDV}, and the converse is true for $\mathrm{SL}^\smallpm(2,\R)$-cocycles. In ODE theory, uniform hyperbolicity and dominated splittings are sometimes called \emph{exponential dichotomy} \cite[{\S}1.4]{JohnEtc} and \emph{exponential separation} \cite[p.~189]{ColoniusK}, respectively. Some types of Lyapunov spectra are studied in the papers \cite{Bochi_ICM,Park,BSert}.

\subsection{The main result}

Back to $\mathrm{SL}^\smallpm(2,\R)$-cocycles, note that if $(T,A)$ is uniformly hyperbolic, then all its ergodic measures are hyperbolic; actually, the Lyapunov spectrum \eqref{e.spec} is away from zero, i.e., $\inf \Lambda(T,A) > 0$.
Consider the converse statement:
\begin{equation}\label{e.converse}
\inf \Lambda(T,A) > 0 
\quad \overset{?}{\Rightarrow} \quad 
(T,A) \text{ uniformly hyperbolic} \, ;
\end{equation}
in other words, does ``uniform nonuniform hyperbolicity'' imply uniform hyperbolicity?
The answer is negative since there exist $\mathrm{SL}^\smallpm(2,\R)$-cocycles over uniquely ergodic base dynamics which are not uniformly hyperbolic despite having a positive Lyapunov exponent with respect to the unique invariant probability measure. Herman \cite{Herman} provides several examples of such cocycles. Another class of examples was constructed by Walters \cite{Walters}, and we will discuss it in detail later (see \cref{ss.Walters}). Actually, the very first examples of (continuous-time) nonuniform hyperbolicity with uniquely ergodic base were exhibited in the Soviet literature: see \cite[{\S}8.7]{JohnEtc} for references and a modern exposition.

\medskip

Let us specialize further to the situation where $T \colon X \to X$ is a transitive subshift of finite type (SFT), and therefore (as long as we exclude the uninteresting case of finite state space), far from being uniquely ergodic.
Then statement~\eqref{e.converse} holds for locally constant cocycle maps $A$.
More generally, it holds if the cocycle map $A$ is H\"older continuous and fiber-bunched: see \cite[Theorem~1.5]{Velozo}. 
However, statement~\eqref{e.converse} 
does not hold for all H\"older cocycles, as shown by an example of Velozo Ruiz \cite[Theorem~4.1]{Velozo} (preceded by constructions of \cite{CLR,Gogolev} in smooth dynamics).
Let us note that, even though the choice of metric in $X$ is not canonical, the class of H\"older maps is: see \cite[{\S}1.9.a]{KatokH}).

A result of DeWitt and Gogolev \cite[Theorem~1.3]{DWG} (see also Guysinsky \cite{Guy}), if specialized to dimension~$2$, provides other sufficient conditions for the validity of statement~\eqref{e.converse}.

\begin{definition}
We say that a continuous $\mathrm{SL}^\smallpm(2,\R)$-cocycle $(T,A)$ is \emph{monochromatic} if its Lyapunov spectrum $\Lambda(T,A)$ is a singleton $\{\lambda_0\}$. Equivalently, all ergodic Borel probability measures have same Lyapunov exponent $\lambda_0$.
\end{definition}

\begin{theorem}[DeWitt and Gogolev, Guysinsky] \label{t.DGG}
Let $T$ be a transitive SFT and let $A \colon X \to \mathrm{SL}^\smallpm(2,\R)$ be a H\"older map.
If the cocycle $(T,A)$ is monochromatic with a positive Lyapunov exponent, then it is uniformly hyperbolic.
\end{theorem}

In order to apply the theorem above, it is actually sufficient to check that all invariant measures supported on periodic orbits have the same Lyapunov exponent -- this follows from Kalinin's approximation theorem \cite[Theorem~1.4]{Kalinin}, which valid in the SFT / H\"older setting (but not in general, see \cite{Bochi_isolated}).  
Actually, \cref{t.DGG} is still valid if the Lyapunov spectrum $\Lambda(T,A)$ is ``narrow'', that is, sufficiently close to a point: see \cite[Theorem~1.3]{DWG}. 

In this paper, we prove that the H\"older hypothesis is indispensable in \cref{t.DGG}:

\begin{theorem}\label{t.main_SFT}
Let $T \colon X \to X$ be a subshift of finite type, where $X$ is not a finite set. 
Fix a constant $\lambda_0>0$.
Then there exists a continuous map $A \colon X \to \mathrm{SL}^{\smallpm}(2,\R)$ such that 
the cocycle $(T,A)$ is monochromatic with Lyapunov exponent $\lambda_0$, but it is not uniformly hyperbolic. 
\end{theorem}

Actually, the conclusion holds not only for SFTs, but also for essentially any base dynamics that contains a nontrivial mixing SFT as a subsystem: see \cref{t.main_embed}.

\subsection{Localized nonuniform hyperbolicity}

In our examples, the obstruction to uniform hyperbolicity is ``localized'' in the following sense:
there exists a $T$-invariant compact set $Y \subset X$ such that $T|_Y$ is transitive and uniquely ergodic, the restricted cocycle $(T|_Y,A|_Y)$ is not uniformly hyperbolic, and for every compact invariant set $Z \subset X \setminus Y$, the restricted cocycle $(T|_Z,A|_Z)$ is uniformly hyperbolic. 
Let us note that the aforementioned example of Velozo Ruiz \cite[Theorem~4.1]{Velozo} has the same localization property. However, in his case the set $Y$ is relatively simple, namely the closure of an orbit which is homoclinic to a fixed point, while in our situation the set $Y$ is rather complicated: the restricted cocycle $(T|_Y,A|_Y)$ is of Walters type.
The Katok area-preserving diffeomorphism \cite{Katok} is another example of ``localized nonuniform hyperbolicity'' and in this case the set $Y$ is finite.  

All those examples (including ours) have the common feature that hyperbolicity degenerates in a well-controlled way as one approaches the set $Y$. The main difficulty which the present work overcomes is allowing hyperbolicity to degenerate while keeping the Lyapunov spectrum monochromatic. 

As outlined previously, our construction uses a cocycle of Walters type as an obstruction to uniform hyperbolicity. We leave the definition of Walters cocycles for \cref{ss.Walters}, but let us mention the feature that sets them apart from other examples of nonuniformly hyperbolic cocycles: Walters cocycles are formed by entrywise \emph{nonnegative} matrices. This is the key property that makes our proof possible. An informal explanation of the relevance of this property is given at \cref{ss.positive_summary}.

\subsection{Alternative methods?}

At this point the reader may wonder whether Velozo Ruiz's relatively simple construction can be adapted to a proof of \cref{t.main_SFT}. Let us recall some additional features of the example from \cite{Velozo}: the base dynamics is the full $2$-shift and the matrix function is of the form $A(x) = A_0 R_{\theta(x)}$, where $A_0$ is the constant matrix $\mathrm{diag}(2,1/2)$ and $R_\theta$ denotes the rotation matrix by angle $\theta$. The angle function $\theta$ is carefully chosen: among other conditions, it is nonnegative, supported on the neighborhood of a point $q$ which is homoclinic to a fixed point $p$, and it attains its maximum $\pi/2$ at the point $q$. Since the right-angle rotation interchanges the two invariant directions of the original cocycle $A_0$, the resulting cocycle cannot be uniform hyperbolic. Nevertheless, with an appropriate choice of the function $\theta$, it can be shown that orbits passing near -- but not through -- $q$ have sufficient time to ``recover'' from the rotation, leading to Lyapunov exponents that are uniformly bounded away from zero. 

The example is therefore a deformation of a constant cocycle $A_0$, whose Lyapunov exponent is $\log 2$. 
While it is clear that proximity to $q$ results in a controlled loss of Lyapunov exponent, one might ask whether this loss could be compensated by further perturbations away from $q$. Consider periodic points near $q$ whose orbits remain most if the time close to $p$. Any attempt at compensation for such a periodic orbit would need to occur near $p$, where we wish to preserve the original Lyapunov exponent $\log 2$. These two goals seem incompatible. For this reason, the method does not appear to offer a promising route toward the construction of examples like ours.

\medskip
\noindent\textbf{Notation:} Integer intervals are denoted using double brackets, that is, if $m,n \in \Z$, then $\ldbrack m,n \rdbrack \coloneqq \{ k \in \Z \st m \le k \le n\}$.

\section{Scheme of the proof}

\subsection{Veech-like maps}

Walters' construction requires special properties of the base dynamics, which are encapsulated in the following definition: 

\begin{definition}
Let $Y$ be a compact metric space and let $T \colon Y \to Y$ be a homeomorphism. We say that $T$ is a \emph{Veech-like map} if $T$ is minimal and uniquely ergodic and $T^2$ is minimal but not uniquely ergodic. 
\end{definition}

Veech-like maps were first constructed long ago by Veech in the study of a number-theoretical problem \cite{Veech69}. His examples were later identified as being isomorphic to interval exchange transformations (IET): see \cite{KeynesNewton}, \cite[p.~798]{Veech77}. For a modern perspective and variations of Veech's construction, see \cite{Ferenczi}. 

We will use shift maps that are Veech-like:

\begin{theorem} \label{t.existence_Veech}
There exists a shift space on finitely many symbols such that the corresponding shift transformation is a Veech-like map and has zero topological entropy. 
\end{theorem}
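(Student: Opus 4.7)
The plan is to construct the required subshift by coding a Veech interval exchange transformation (IET). Veech \cite{Veech69} exhibited, for suitable irrational parameters, an IET $T_0 \colon Y_0 \to Y_0$ on $Y_0=[0,1)$ that is minimal and uniquely ergodic while $T_0^2$ is minimal but admits two distinct ergodic invariant measures (exchanged by $T_0$). I would take one such $T_0$, arranging in addition that Keane's infinite-distinct-orbit condition (IDOC) holds. Writing $\{I_a\}_{a\in\mathcal{A}}$ for the partition of $Y_0$ into the half-open continuity intervals, the itinerary map $\phi \colon Y_0 \to \mathcal{A}^\Z$ given by $\phi(x)_n = a \Leftrightarrow T_0^n(x)\in I_a$ satisfies $\phi\circ T_0 = \sigma\circ\phi$, and we define the shift space as $X \coloneqq \overline{\phi(Y_0)}$.

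Next, I would transfer the Veech properties from $(Y_0, T_0)$ to $(X, \sigma)$. Under IDOC, the natural collapse map $\pi\colon X\to Y_0$ defined by $\{\pi(\omega)\} = \bigcap_n T_0^{-n}(\overline{I_{\omega_n}})$ is a continuous surjective factor map that is bijective off the countable set of itineraries associated with one-sided $T_0$-limits of the partition endpoints. Because $\pi$ is almost 1-1, push-forward and pull-back set up a bijection between $\sigma^k$-invariant Borel probabilities on $X$ and $T_0^k$-invariant Borel probabilities on $Y_0$ for every $k\ge 1$. This transfers unique ergodicity of $T_0$ to $\sigma|_X$ and non-unique ergodicity of $T_0^2$ to $\sigma^2|_X$. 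Minimality of $\sigma|_X$ follows from minimality of $T_0$ combined with the density of $\phi(Y_0)$ in $X$; for $\sigma^2|_X$ one additionally needs to check that each ``extra'' sequence in $X\smallsetminus \phi(Y_0)$ has dense $\sigma^2$-orbit, and this reduces to density of $T_0^2$-orbits of the finitely many partition endpoints, which is given by Veech's hypothesis.

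Finally, zero topological entropy is immediate from the classical fact that IET codings have linearly bounded word-complexity (Keane, Ferenczi), so $h_{\mathrm{top}}(\sigma|_X)=0$. The main delicate point of the argument is the preservation of $\sigma^2$-minimality under the closure operation that defines $X$: passing to the closure introduces extra bi-infinite sequences at the partition boundaries, and one must verify that these sequences do not assemble into a proper closed $\sigma^2$-invariant subset. This is exactly the step where the Veech hypothesis on $T_0^2$ is essential, since without density of $T_0^2$-orbits of endpoints the symbolic model could easily lose minimality under the square of the shift.
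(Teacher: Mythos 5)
Your route---code a Veech interval exchange---is the classical one and is genuinely different from the paper's. The paper deliberately avoids it: it builds a self-contained substitution-like subshift over $\{\mathord{\uparrow},\mathord{\downarrow},0\}$ via the recursion $e_{k+1}=e_k^{m_k}\,0\,e_k\,\overline{e_k}^{m_k}\,0\,\overline{e_k}$ and verifies minimality, unique ergodicity, zero entropy, and non-unique ergodicity of $T^2$ by direct word-frequency counts, precisely because ``we could not find an exact reference'' for the symbolic statement. The same difficulty is the soft spot of your sketch: your load-bearing input is ``Veech exhibited an IET $T_0$ that is minimal and uniquely ergodic while $T_0^2$ is minimal with two ergodic measures.'' What Veech and Keynes--Newton construct, as usually stated, are minimal \emph{non-uniquely ergodic} systems ($\mathbb{Z}/2$ skew products over rotations, later recognized as IETs)---objects playing the role of $T_0^2$, not of $T_0$. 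Extracting from them a uniquely ergodic square root $T_0$ realized as an IET, and checking Keane's condition for the special (non-generic) parameters at which unique ergodicity of the square fails, is exactly the nontrivial content; your proposal does not say how to get this from the cited sources, so as written the argument is a reduction to a statement you have not located or proved.

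The coding steps are standard but also not free, and you have correctly identified where the work is. Two points need to be carried out rather than asserted. First, the almost 1-1 measure correspondence must be run for $\sigma^2$ as well as for $\sigma$: one needs the non-injectivity set of $\pi$ (a countable union of orbits) to be null for every $\sigma^2$-invariant measure, which follows from aperiodicity but should be said. Second, minimality of $\sigma^2|_X$ does not follow formally from minimality of $T_0^2$ plus density of $\phi(Y_0)$; one must show at the level of the language that every admissible word occurs syndetically \emph{in both parities}. The paper's \cref{l.minimalities} does exactly this for its explicit words (every sufficiently long admissible word contains each $e_k$ in both odd and even positions); in the IET picture the analogous parity-uniform recurrence for the two-sided coding is only gestured at. If you supply (i) a precise construction or reference for the Veech IET with IDOC and (ii) the parity recurrence, your approach goes through and is shorter. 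Note, though, that the paper's explicit construction buys something your route does not: the concrete function $\varphi$ and the limit $c=\lim\theta(e_k)/|e_k|$, which are reused in \cref{t.our_Walters} to exhibit an explicit Walters cocycle.
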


In order to keep this paper self-contained, we will present a complete proof of the statement above in \cref{s.symbolic_Veech}.

The next lemma describes some basic properties of Veech-like maps:

\begin{lemma}\label{l.double_ergodicity}
If $T \colon Y \to Y$ is a Veech-like map, then $T^2$ admits exactly two ergodic probability measures $\nu_0$ and $\nu_1$, both of which have support $Y$. Furthermore, $T_*(\nu_0) = \nu_1$, $T_*(\nu_1) = \nu_0$, and the unique $T$-invariant probability measure is $\nu = \frac{\nu_0+\nu_1}{2}$. 
\end{lemma}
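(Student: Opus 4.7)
My plan is to exploit the unique ergodicity of $T$ to derive the symmetrization identity
\[
\mu + T_*\mu = 2\nu \qquad \text{for every $T^2$-invariant probability measure $\mu$,}
\]
where $\nu$ is the unique $T$-invariant probability measure. The identity is immediate because $\frac{1}{2}(\mu + T_*\mu)$ is $T$-invariant and must therefore equal $\nu$. I will also use the elementary facts that $T_*$ is an affine involution on the simplex $\mathcal{M}(T^2)$ of $T^2$-invariant probability measures, and that it permutes the ergodic ones (since $T$ is a topological conjugacy of $T^2$ with itself).

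The first real step, which I expect to be the main obstacle, is to show that $\nu$ is \emph{not} ergodic for $T^2$. Since $T^2$ is not uniquely ergodic, there exists an ergodic measure $\mu_0 \in \mathcal{M}(T^2)$ with $\mu_0 \neq \nu$; setting $\mu_1 := T_*\mu_0$ (also ergodic for $T^2$), the identity gives $\nu = \frac{1}{2}(\mu_0 + \mu_1)$. If $\nu$ were itself ergodic, hence an extreme point of the simplex $\mathcal{M}(T^2)$, this convex combination would force $\mu_0 = \nu$, a contradiction.

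Next I pick any ergodic $\nu_0 \in \mathcal{M}(T^2)$ and set $\nu_1 := T_*\nu_0$. Since $\nu$ is not $T^2$-ergodic we have $\nu_0 \neq \nu$, and then $\nu_1 \neq \nu_0$ (otherwise $\nu_0$ would be $T$-invariant and hence equal $\nu$). The identity gives $\nu = \frac{1}{2}(\nu_0 + \nu_1)$. To show $\nu_0, \nu_1$ are the only $T^2$-ergodic measures, suppose some other ergodic $\mu \in \mathcal{M}(T^2)$ is distinct from both. Distinct ergodic measures are mutually singular, so $\mu \perp \nu_0$ and $\mu \perp \nu_1$, whence $\mu \perp \nu$; but $\mu \leq \mu + T_*\mu = 2\nu$ implies $\mu \ll \nu$, a contradiction.

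It remains to verify $\operatorname{supp}(\nu_i) = Y$ and the relation $T_*\nu_1 = \nu_0$. For the supports, each $\operatorname{supp}(\nu_i)$ is a nonempty closed $T^2$-invariant subset of $Y$, hence equals $Y$ by the assumed minimality of $T^2$. For the swapping, $T_*\nu_1 = T_*^2 \nu_0 = (T^2)_*\nu_0 = \nu_0$. Once the non-ergodicity of $\nu$ is established, the remaining assertions follow mechanically from the symmetrization identity and the standard mutual singularity of distinct ergodic measures.
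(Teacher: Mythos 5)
Your proof is correct and follows essentially the same route as the paper: symmetrize an ergodic measure $\mu_0\neq\nu$ of $T^2$ by $T_*$ and use unique ergodicity of $T$ to get $\nu=\frac12(\mu_0+T_*\mu_0)$. The only cosmetic difference is in the uniqueness step, where you combine mutual singularity of distinct ergodic measures with $\mu\le 2\nu$, whereas the paper invokes uniqueness of the ergodic decomposition of $\nu$; both are standard and equally valid.
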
	
	
Note that the measures $\nu_0$ are $\nu_1$ are mutually singular but ``intermingled'', as they have the same support.

\begin{proof}
Let $\nu$ be the unique $T$-invariant Borel probability measure.
Since $\nu$ is $T^2$-invariant but $T^2$ is not uniquely ergodic, there exists an $T^2$-invariant measure $\nu_0$ different from $\nu$. We can assume that $\nu_0$ is ergodic with respect to $T^2$. Let $\nu_1 \coloneqq T_*(\nu_0)$. Then $T_*(\nu_1) = T_*^2 (\nu_0) = \nu_0$. The probability measure $\frac{1}{2}\nu_0+ \frac{1}{2}\nu_1$ is $T$-invariant and therefore it must coincide with $\nu$. Like $\nu_0$, the measure $\nu_1$ is ergodic with respect to $T^2$. Since $\nu_0 \neq \nu$, we have $\nu_1 \neq \nu_0$. So we found two different ergodic measures for $T^2$. Note that the ergodic decomposition of $\nu$ with respect to $T^2$ is $\frac{1}{2}\nu_0+ \frac{1}{2}\nu_1$. Now, suppose $\tilde{\nu}_0$ is another ergodic measure for $T^2$. Then, letting $\tilde{\nu}_1 \coloneqq T_*(\nu_1)$, a repetition of the argument above yields that $\frac{1}{2}\tilde \nu_0+ \frac{1}{2}\tilde \nu_1$ is an ergodic decomposition of $\nu$ with respect to $T^2$. By uniqueness of ergodic decomposition, we conclude that $\tilde{\nu}_0$ equals either $\nu_0$ or $\nu_1$. So $T^2$ admits exactly two ergodic measures, as we wanted to show. Since $T^2$ is minimal, each of those measures has full support.
\end{proof}

\subsection{Walters cocycles}\label{ss.Walters}

We now describe the class of nonuniformly hyperbolic cocycles introduced by Walters \cite{Walters}. 

\begin{theorem}[Walters \cite{Walters}] \label{t.Walters}
Let $T \colon Y \to Y$ be a Veech-like map.
Then, for every $\lambda_0>0$, there exists a continuous map $B \colon Y \to \mathrm{SL}^{\smallpm}(2,\R)$ 
such that the cocycle $(T,B)$ is not uniformly hyperbolic and its Lyapunov exponent $\lambda(T,B,\nu)$ with respect to the unique $T$-invariant probability measure $\nu$ equals $\lambda_0$. Furthermore, the matrices of the cocycle are of the form
\begin{equation}\label{e.Walters}
B(y) = \begin{pmatrix} 0 & b(y) \\ 1/b(y) & 0 \end{pmatrix} \, .
\end{equation}
\end{theorem}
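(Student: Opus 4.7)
My plan is to set $b \coloneqq e^\phi$ for a continuous $\phi \colon Y \to \R$ chosen so that $\int \phi \, d\nu_1 - \int \phi \, d\nu_0 = 2\lambda_0$. Such $\phi$ exists because $\nu_0 \neq \nu_1$ by \cref{l.double_ergodicity}, so some continuous function separates them, and a scalar multiple achieves the prescribed value.

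The reason this choice of $\phi$ is natural is a multiplicative cancellation specific to the matrices in \eqref{e.Walters}. A direct computation gives
\[
B(Ty)\,B(y) \;=\; \begin{pmatrix} b(Ty)/b(y) & 0 \\ 0 & b(y)/b(Ty) \end{pmatrix},
\]
and by induction the even iterates are diagonal: $B^{(2m)}(y) = \mathrm{diag}\bigl(\alpha_m(y),\, \alpha_m(y)^{-1}\bigr)$, with
\[
\log \alpha_m(y) \;=\; \sum_{j=0}^{m-1} g(T^{2j}y), \qquad g \coloneqq \phi \circ T - \phi.
\]
Thus $\log \alpha_m(y)$ is a Birkhoff sum over the dynamics $T^2$. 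Since $T_*\nu_0 = \nu_1$, one has $\int g \, d\nu_0 = \int \phi \, d\nu_1 - \int \phi \, d\nu_0 = 2\lambda_0$, and symmetrically $\int g \, d\nu_1 = -2\lambda_0$. Applying the Birkhoff ergodic theorem to the two $T^2$-ergodic components of $\nu$ then yields $|\log \alpha_m(y)|/m \to 2\lambda_0$ for $\nu$-almost every $y$. Because $\|B^{(2m)}(y)\| = e^{|\log \alpha_m(y)|}$ and $B$ is uniformly bounded on $Y$ (absorbing the odd iterates), I obtain $\lambda(T,B,\nu) = \lambda_0$.

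For non-uniform hyperbolicity, I argue by contradiction. If $(T, B)$ were uniformly hyperbolic, there would be a continuous invariant unstable line field $E^\mathrm{u} \colon Y \to \mathbb{RP}^1$. The diagonal form of $B^{(2m)}(y)$, together with $\log \alpha_m(y)/m \to +2\lambda_0$ along every $\nu_0$-generic orbit, forces the Oseledets unstable direction to equal $[1{:}0]$ at every point of $\mathrm{gen}(\nu_0)$, and symmetrically $[0{:}1]$ at every point of $\mathrm{gen}(\nu_1)$. By \cref{l.double_ergodicity}, both measures have full support on $Y$, so both generic sets are dense in $Y$. Continuity of $E^\mathrm{u}$ then forces $E^\mathrm{u} \equiv [1{:}0]$ on the closure $\overline{\mathrm{gen}(\nu_0)} = Y$, contradicting $E^\mathrm{u} = [0{:}1]$ on the non-empty set $\mathrm{gen}(\nu_1)$.

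The step I expect to be most delicate is the Lyapunov exponent bookkeeping: $\nu$ is $T$-ergodic yet splits into two $T^2$-ergodic pieces on which the Birkhoff averages of $g$ take opposite nonzero values, so one must apply the ergodic theorem over $T^2$ and then reassemble the behavior along $T$-orbits. Once the diagonal structure of the even iterates is in hand, the non-hyperbolicity argument becomes essentially automatic from the intermingling of the supports of $\nu_0$ and $\nu_1$ guaranteed by \cref{l.double_ergodicity}.
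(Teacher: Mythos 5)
Your proposal is correct and follows essentially the same route as the paper: choose $b=e^{\phi}$ with the integrals of $\phi$ against $\nu_0,\nu_1$ differing by $2\lambda_0$, observe that $B^{(2m)}$ is diagonal with logarithm a Birkhoff sum over $T^2$, apply the ergodic theorem to the two $T^2$-ergodic components, and derive non-uniform hyperbolicity from the fact that the Oseledets unstable direction is $[1{:}0]$ on a dense set and $[0{:}1]$ on another dense set (both measures having full support by \cref{l.double_ergodicity}). The only difference is an immaterial sign convention in the definition of the transfer function $g$ versus the paper's $\psi$.
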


Any cocycle as in the conclusion of the theorem will be called a \emph{Walters cocycle}.
For the convenience of the reader, we provide a proof of \cref{t.Walters}.
We note that the cocycle actually defined by Walters \cite{Walters} is slightly different from ours (the determinant was not normalized to $-1$), and his proof used a different argument (see \cref{r.liminf_zero} below). 

\begin{proof}[Proof of \cref{t.Walters}]
Let $\nu_0$ and $\nu_1$ be the two $T^2$-invariant probability measures provided by \cref{l.double_ergodicity}.
Since $\nu_0 \neq \nu_1$,
given a number $\lambda_0>0$, we can find a continuous function $\varphi$ such that 
\begin{equation}\label{e.lambda}
\int_Y \varphi \, d\nu_0 - \int_Y \varphi \, d\nu_1  = 2\lambda_0 \, .
\end{equation}
Let $b \coloneqq e^\varphi$ and define $B$ by formula \eqref{e.Walters}.
We claim that: 
\begin{itemize}
\item the Lyapunov exponent $\lambda(T,B,\nu)$ equals $\lambda_0$;
\item for $\nu_0$-a.e.\ $y$, the Oseledets spaces are $E^\mathrm{u} (y) = \R \left( \begin{smallmatrix} 1 \\ 0 \end{smallmatrix} \right)$ and $E^\mathrm{s}(y) = \R \left( \begin{smallmatrix} 0 \\ 1 \end{smallmatrix} \right)$; 
\item for $\nu_1$-a.e.\ $y$, the Oseledets spaces are $E^\mathrm{u} (y) = \R \left( \begin{smallmatrix} 0 \\ 1 \end{smallmatrix} \right)$ and $E^\mathrm{s}(y) = \R \left( \begin{smallmatrix} 1 \\ 0 \end{smallmatrix} \right)$.
\end{itemize}
Consider the function 
\begin{equation}\label{e.psi}
\psi \coloneqq \varphi  - \varphi \circ T \, .
\end{equation}
By \cref{l.double_ergodicity}, $T_* \nu_0 = \nu_1$, and so 
\begin{equation}\label{e.two_lambda}
\int \psi \, d\nu_0 = \int \varphi \, d\nu_0 -   \int \varphi \circ T \, d\nu_0 = \int \varphi \, d\nu_0 -   \int \varphi  \, d\nu_1 = 2\lambda_0 \, .
\end{equation}
Similarly, $\int \psi \, d\nu_1 = -2 \lambda_0$.
The relevance of the function $\psi$ comes from the fact that
\begin{equation}
B^{(2)}(y) = B(T(y)) B(y) = \begin{pmatrix} e^{\psi(y)} & 0 \\ 0 & e^{-\psi(y)} \end{pmatrix} \, ;
\end{equation}
more generally, $B^{(2n)}(y) $ is a diagonal matrix of determinant $1$ whose top-left entry is
\begin{equation}
\exp \sum_{j = 0}^{n-1} \psi(T^{2j}(y)) \, .
\end{equation}
Note that the log of this quantity is a Birkhoff sum with respect to $T^2$.
So, by the ergodic theorem, for $\nu_0$-almost every $y$, the Lyapunov exponent of the vector $\left( \begin{smallmatrix} 1 \\ 0 \end{smallmatrix} \right)$ is $\frac{1}{2} \int \psi \, d\nu_0$, which by \eqref{e.two_lambda}, equals $\lambda_0$. Thus the expanding Oseledets space is $E^\mathrm{u} (y) = \R \left( \begin{smallmatrix} 1 \\ 0 \end{smallmatrix} \right)$, as claimed above. The other claims are proved similarly.

Since both measures $\nu_0$ and $\nu_1$ have full support, the Oseledets splitting cannot coincide $\nu$-a.e.\ with a continuous splitting. 
So the cocycle $(T,B)$ is not uniformly hyperbolic. 
\end{proof}

\begin{remark}\label{r.liminf_zero} 
Walters \cite{Walters} showed that the cocycle is not uniformly hyperbolic by proving that there exist points $y \in Y$ for which the sequence $\frac{1}{n} \log \|B^{(n)}(y)\|$ does not converge.
In fact, the set of accumulation points of the sequence above is $[0,\lambda_0]$ for all points $y$ in a residual subset of $Y$; see \cite[Corollary~4.13]{BPS}.
\end{remark}

\subsection{Monochromatic extensions}

The key technical result of this paper is the following, whose proof is postponed to \cref{s.main_positive}:

\begin{theorem}\label{t.main_positive}
Let $T \colon X \to X$ be a biLipschitz homeomorphism of a compact metric space.
Let $Y \subseteq X$ be a nonempty $T$-invariant compact set such that the restriction $T|_Y$ is uniquely ergodic with a corresponding measure $\nu$.
Let $B \colon X \to \mathrm{SL}^{\smallpm}(2,\R)$ be a continuous map such that $B(x)$ is a nonnegative matrix for all $x \in X$.
Assume that the Lyapunov exponent $\lambda_0 \coloneqq \lambda(T|_Y,B|_Y,\nu)$ is positive.
Then there exists a continuous map $A \colon X \to \mathrm{SL}^{\smallpm}(2,\R)$ such that $A(y)=B(y)$ for all $y \in Y$ and 
$\lambda(T,A,\mu) = \lambda_0$ for every ergodic Borel probability measure $\mu$. 
\end{theorem}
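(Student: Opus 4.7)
The plan is to construct $A$ as a continuous modification of $B$ on $X \setminus Y$, exploiting the nonnegativity of $B$ (which guarantees the positive cone $\mathcal{C}_+ \subset \R^2$ is forward-invariant under all fiber actions) together with the unique ergodicity of $T|_Y$. A preliminary observation reduces the task: any ergodic Borel probability measure $\mu$ on $X$ satisfies $\mu(Y) \in \{0,1\}$ because $Y$ is closed and $T$-invariant. If $\mu(Y) = 1$, then $\mu = \nu$ by unique ergodicity of $T|_Y$, so the requirement $A|_Y = B|_Y$ automatically yields $\lambda(T, A, \mu) = \lambda_0$. The substantive content is thus to arrange $\lambda(T, A, \mu) = \lambda_0$ for every ergodic $\mu$ with $\mu(Y) = 0$; for such $\mu$, the $T$-invariance of $Y$ forces $\mu$-a.e.\ orbit to lie entirely in $X \setminus Y$.

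The central construction will produce a continuous unit-vector section $u \colon X \setminus Y \to S^1 \cap \mathcal{C}_+$ into the positive cone, together with a continuous cocycle $A \colon X \to \mathrm{SL}^{\smallpm}(2, \R)$ satisfying $A|_Y = B|_Y$, such that $u$ is an invariant direction for $A$ on $X \setminus Y$: $A(x) u(x) = \rho(x)\, u(T x)$ with a continuous $\rho \colon X \setminus Y \to \R \setminus \{0\}$ obeying the cohomological normalization
\begin{equation*}
\log |\rho(x)| = \lambda_0 + g(Tx) - g(x), \qquad x \in X \setminus Y,
\end{equation*}
for some bounded continuous $g \colon X \setminus Y \to \R$. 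Granting this, any $x \in X \setminus Y$ whose whole forward orbit stays in $X \setminus Y$ satisfies $\log \|A^{(n)}(x) u(x)\| = n \lambda_0 + g(T^n x) - g(x)$ by telescoping, so the vector $u(x)$ has Lyapunov exponent exactly $\lambda_0$ for $A$ at $x$. Because $A$ takes values in $\mathrm{SL}^{\smallpm}(2, \R)$, the two Lyapunov exponents at any point are negatives of each other; combined with $\lambda_0 > 0$, this forces $u(x)$ to lie in the unstable direction and gives $\lambda(T, A, \mu) = \lambda_0$ for every ergodic $\mu$ with $\mu(Y) = 0$.

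The hardest part will be the simultaneous construction of $A$ and $u$: the cocycle $A$ must extend $B|_Y$ continuously to all of $X$, yet admit the invariant direction $u$ on $X \setminus Y$, even though $B|_Y$ (a Walters cocycle) admits no continuous invariant direction. The candidate section $u$ will therefore have to approach the boundary of the positive cone as $x \to Y$, mimicking the alternating Oseledets directions of $B|_Y$ described after \cref{t.Walters}, which also explains why no continuous extension of $u$ to $Y$ can exist. The nonnegativity of $B$ is essential here on two counts: it confines $u$ to the compact projectivized positive cone, enabling continuous extension arguments, and it lets a correction factor $C$ in a factorization $A(x) = B(x) C(x)$ (with $C|_Y = I$) be chosen nonnegative, so that nonnegativity of $A$ and the associated cone preservation are maintained throughout. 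The biLipschitz hypothesis on $T$ enters to supply the modulus of continuity needed for the Livsic-style solvability of the coboundary equation defining $g$.
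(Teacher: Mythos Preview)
Your architectural outline matches the paper's: reduce to measures with $\mu(Y)=0$, build an $A$-invariant positive direction $u$ on $X\setminus Y$, and arrange the expansion rate along $u$ to equal $\lambda_0$ up to a coboundary. But two load-bearing claims in your plan are not justified and, as stated, are likely false. First, you assert that the transfer function $g$ can be taken \emph{bounded} on $X\setminus Y$. The paper cannot achieve this: its $g(x)=-Z_{\tau(x)}(x)$ is built from Birkhoff-type sums of length $\tau(x)\to\infty$ as $x\to Y$ and is genuinely unbounded. What the paper does instead is solve only an \emph{approximate} cohomological equation $f=g\circ T-g+r$ with a continuous remainder $r$ vanishing on $Y$ (Lemma~\ref{l.approx_cob}); a second multiplicative correction $S$ then absorbs $r$, and the final Lyapunov computation uses Poincar\'e recurrence (so $g(T^{n_i}x)-g(x)\to 0$ along a subsequence by continuity at $x$, not boundedness). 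Your appeal to ``Liv\v{s}ic-style solvability'' is misplaced: no hyperbolicity, specification, or H\"older data are available on the noncompact set $X\setminus Y$, and the whole point of the construction is that the resulting $A$ is \emph{not} H\"older.

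Second, you do not explain how the invariant direction $u$ is produced, and your single-factor ansatz $A=BC$ with $C$ nonnegative hides the real work. The matrices $B(x)$ are only nonnegative, so their projective action need not contract the Hilbert metric on the positive cone; the paper must first pass to an auxiliary \emph{strictly} positive cocycle $P=BH$ on $X\setminus Y$, with the off-diagonal entries of $H(x)$ decaying at a very carefully tuned (sub-logarithmic) rate in $\mathsf d(x,Y)$, so that (i) the Hilbert diameters $\mathfrak D(P^{(n)}(T^{-n}x))\to 0$ (giving $u$, Lemma~\ref{l.bundle}) while (ii) the finite-time expansion of $P$ along $u$ stays close to $\lambda_0$ at matched time/space scales (Lemma~\ref{l.key_estimate}, the place where unique ergodicity of $T|_Y$ is actually used). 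Only after this does a second correction $S$ normalize the rate; note that the paper's final $A=PS$ is \emph{not} nonnegative on $X\setminus Y$, contrary to what your last paragraph anticipates. In short, your plan names the destination correctly but skips the two hardest legs of the journey: the positive perturbation that creates $u$, and the approximate (not exact, not bounded) coboundary that tames the rate.
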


Clearly, if the cocycle $(T|_Y,B|_Y)$ is not uniformly hyperbolic, then the application of \cref{t.main_positive} produces a monochromatic nonuniformly hyperbolic cocycle $(T,A)$.

\begin{remark}
We do not know if assuming that the cocycle $(T|_Y,B|_Y)$ is monochromatic with positive exponent is  it is sufficient for the validity of \cref{t.main_positive}; our proof does use unique ergodicity.
On the other hand, the biLipschitz hypothesis on $T$ is probably unnecessary, but we found it convenient for the construction.
\end{remark}

\subsection{Proof of the main results}

We need one final ingredient:

\begin{theorem}[Krieger's embedding theorem {\cite[Corollary 10.1.9]{LindMarcus}}] \label{t.embedding}
Let $Y$ be a shift space and let $X$ be a mixing SFT.
Then $Y$ can be embedded as a proper subshift of $X$ if and only if the following two conditions are satisfied:
\begin{enumerate}
	\item\label{i.entropy} $h_\mathrm{top}(Y) < h_\mathrm{top}(X)$ and 
	\item\label{i.periodic} for each $n\ge 1$, the number $\mathfrak{P}_Y(n)$ of periodic orbits with minimal period $n$ for the shift $Y$ is less than or equal to the corresponding number $\mathfrak{P}_X(n)$. 
\end{enumerate}
\end{theorem}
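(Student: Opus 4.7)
The plan is to prove Krieger's theorem in both directions, though the sufficiency is a deep classical result and I can only sketch the main ideas.

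The necessity direction is straightforward. If $\phi\colon Y \hookrightarrow X$ is an embedding onto a proper subshift, then $\phi$ commutes with the shift, so a point of minimal period $n$ in $Y$ maps to a point of minimal period $n$ in $X$. Injectivity of $\phi$ then gives the periodic orbit count condition. For the entropy bound, topological entropy is monotone under topological embeddings, hence $h_\mathrm{top}(Y) \leq h_\mathrm{top}(X)$. The strict inequality uses the classical fact that a mixing SFT is \emph{entropy minimal}: it admits a unique measure of maximal entropy and this measure has full support, so no proper closed shift-invariant subset can carry it. Therefore a proper subshift must have strictly smaller topological entropy.

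For sufficiency, I would use the classical \emph{marker-and-filler} method. First, using mixing, fix a large integer $N$ such that in the graph presentation of $X$, every ordered pair of vertices admits at least one (in fact many) paths of length $N$ between them. Second, select a finite collection of \emph{marker words} in $X$ that cannot overlap themselves or each other, and whose occurrences in any point of $X$ appear with bounded gaps. Third, partition each point of $Y$ into blocks demarcated by the positions that will carry markers in the image, and assign to each $Y$-block a \emph{filler word} in $X$ that encodes its content. The entropy gap guarantees that, for sufficiently long blocks, the number of admissible $X$-fillers grows faster than the number of $Y$-contents to be encoded, leaving room for an injective encoding; the mixing property then lets us legally concatenate markers and fillers into points of $X$.

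The main obstacle, and the technical heart of Krieger's proof, is the treatment of \emph{periodic orbits}. Because the embedding commutes with the shift, a periodic $Y$-orbit of minimal period $n$ must be sent to a periodic $X$-orbit of the exact same minimal period $n$; the periodic orbit count condition provides precisely the quota required. One must therefore carry out the marker-and-filler construction in a way compatible with this rigid periodic assignment, which involves a delicate combinatorial case split between the periodic points of $Y$ (handled by a direct bijective assignment into periodic $X$-orbits of matching minimal period) and the aperiodic points (encoded via the filler scheme), with the two treatments glued along a neighborhood of the periodic orbits using the slack provided by the entropy gap. A complete treatment is given in \cite[Chapter~10]{LindMarcus}.
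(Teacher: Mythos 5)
This is a quoted classical theorem: the paper offers no proof of its own, citing \cite[Corollary~10.1.9]{LindMarcus}, and it only ever invokes the ``if'' direction (to embed a zero-entropy subshift with no periodic points into a positive-entropy mixing SFT). Your necessity argument is complete and correct: shift-equivariance together with injectivity preserves minimal periods and distinguishes orbits, giving the periodic-point condition, and the \emph{strict} entropy inequality follows from entropy minimality of mixing SFTs exactly as you say (a proper subshift of equal entropy would carry a measure of maximal entropy for $X$ whose support is not all of $X$, contradicting uniqueness and full support of the Parry measure). Your sufficiency direction, however, is a sketch rather than a proof: the marker-and-filler outline and the separate handling of periodic points correctly describe the architecture of Krieger's argument, but the actual mathematical content --- the construction of non-overlapping marker words, the counting argument showing that the entropy gap leaves room for an injective block code, and the delicate treatment of points of low period --- is precisely what you defer back to \cite[Chapter~10]{LindMarcus}. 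Since the paper itself treats the theorem as citable background, this is a reasonable stopping point, but be aware that nothing in your write-up independently establishes the direction the paper actually uses.
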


We now combine all previous theorems 
to prove the existence of nonuniformly hyperbolic monochromatic cocycles. 
We begin with the following variation of \cref{t.main_SFT} which allows for dynamics beyond SFTs:

\begin{theorem}\label{t.main_embed}
Let $T \colon X \to X$ be a biLipschitz homeomorphism of a compact metric space.
Suppose that $T$ has a compact invariant set $\Lambda$ containing more than one point and such that the restriction $T|_\Lambda$ is topologically conjugate to a mixing subshift of finite type.
Fix a constant $\lambda_0>0$.
Then there exists a continuous map $A \colon X \to \mathrm{SL}^{\smallpm}(2,\R)$ such that
\begin{itemize}
\item $\lambda(T,A,\mu) = \lambda_0$ for every ergodic Borel probability measure $\mu$ and
\item the cocycle $(T,A)$ is not uniformly hyperbolic.
\end{itemize}
\end{theorem}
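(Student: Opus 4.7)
The plan is to combine the four ingredients assembled above: use Krieger's theorem (\cref{t.embedding}) to plant a copy of a symbolic Veech map (\cref{t.existence_Veech}) inside $\Lambda$, install a Walters cocycle (\cref{t.Walters}) on that subsystem, and finally spread it monochromatically to all of $X$ via the main technical extension result (\cref{t.main_positive}).

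First, fix a topological conjugacy $h\colon X_0\to\Lambda$, where $X_0$ is a mixing SFT with more than one point. A mixing SFT is either a single fixed point or infinite, so $X_0$ is infinite and in particular $h_\mathrm{top}(X_0)>0$. Invoke \cref{t.existence_Veech} to obtain a symbolic Veech map $S\colon Y_V\to Y_V$ with $h_\mathrm{top}(Y_V)=0$. The entropy inequality $h_\mathrm{top}(Y_V)<h_\mathrm{top}(X_0)$ is then immediate, and since $S$ is minimal with $Y_V$ infinite, $S$ has no periodic orbits at all (a periodic orbit in a minimal system would be the whole space), so $\mathfrak{P}_{Y_V}(n)=0\le\mathfrak{P}_{X_0}(n)$ for every $n\ge 1$. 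Both hypotheses of \cref{t.embedding} thus hold, yielding an embedding $\iota\colon Y_V\hookrightarrow X_0$. Let $Y\coloneqq h(\iota(Y_V))\subseteq\Lambda\subseteq X$; then $T|_Y$ is topologically conjugate to $S$, hence minimal and uniquely ergodic, with unique invariant probability measure $\nu$.

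Next, \cref{t.Walters} applied to $(T|_Y,\nu)$ with the prescribed $\lambda_0$ delivers a continuous $B_Y\colon Y\to\mathrm{SL}^\smallpm(2,\R)$ of the form \eqref{e.Walters} for some continuous $b\colon Y\to(0,\infty)$, satisfying $\lambda(T|_Y,B_Y,\nu)=\lambda_0$, and with $(T|_Y,B_Y)$ not uniformly hyperbolic. To feed this into \cref{t.main_positive} I need a continuous, entrywise nonnegative extension $B\colon X\to\mathrm{SL}^\smallpm(2,\R)$ of $B_Y$; this is easy via Tietze's theorem: extend the continuous function $\log b$ to some continuous $\tilde\varphi\colon X\to\R$, set $\tilde b\coloneqq e^{\tilde\varphi}>0$, and define $B$ by formula \eqref{e.Walters} with $\tilde b$ in place of $b$. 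Then $B$ is continuous, entrywise nonnegative, and agrees with $B_Y$ on $Y$.

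Finally, \cref{t.main_positive} applied to $(T,B,Y,\nu)$ produces a continuous $A\colon X\to\mathrm{SL}^\smallpm(2,\R)$ with $A|_Y=B_Y$ and $\lambda(T,A,\mu)=\lambda_0$ for every ergodic Borel probability measure $\mu$, which is the first conclusion. For the second, uniform hyperbolicity of $(T,A)$ would restrict to uniform hyperbolicity of $(T|_Y,A|_Y)=(T|_Y,B_Y)$, contradicting the Walters cocycle's failure of uniform hyperbolicity; hence $(T,A)$ is not uniformly hyperbolic. There is no real further obstacle here: all the analytic content has been absorbed into \cref{t.main_positive}, and the only delicate points in the present argument are the verification of Krieger's hypotheses (using zero topological entropy of the Veech subshift together with the absence of periodic orbits in minimal infinite systems) and the positivity-preserving extension of $B_Y$ to $X$, both of which are immediate.
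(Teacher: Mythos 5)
Your proposal is correct and follows essentially the same route as the paper's own proof: verify Krieger's hypotheses using the zero entropy and absence of periodic points of the symbolic Veech subshift, embed it into $\Lambda$, install a Walters cocycle there, extend it to a nonnegative cocycle on $X$ via Tietze (your extension of $\log b$ is a slightly more careful way of guaranteeing positivity of the extended $b$), and conclude with \cref{t.main_positive}. No substantive differences.
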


For example, the base dynamics $T$ can be any $C^1$ diffeomorphism of a compact manifold that admits a hyperbolic fixed point with a transverse homoclinic point: use \cite[Theorem~6.5.5]{KatokH}. 

\begin{proof}[{Proof of \cref{t.main_embed}}]
We are given a number $\lambda_0>0$ and a biLipschitz homeomorphism $T \colon X \to X$ admitting an  compact invariant set $\Lambda$ such that $T|_\Lambda$ is topologically conjugate to a mixing SFT. Furthermore, we assume that $\Lambda$ is not a singleton, which implies that $T|_\Lambda$ has positive topological entropy. 
By \cref{t.existence_Veech}, there exists a Veech-like shift map with zero topological entropy. 
Note that Veech-like maps have no periodic points. 
Therefore, by \cref{t.embedding}, we can find a compact invariant set $Y \subset \Lambda$ such that $T|_Y$ is a Veech-like map. 
Use \cref{t.Walters} to construct a Walters cocycle $B \colon Y \to \mathrm{SL}^{\smallpm}(2,\R)$ with Lyapunov exponent $\lambda_0$.
Recall that $B(y)$ is the nonnegative antidiagonal matrix \eqref{e.Walters} where $b \colon Y \to \R_\smallplus$ is a continuous function.
Tietze extension theorem provides a continuous extension of $b$ to the whole space $X$, which we denote by the same letter $b$.
Then formula \eqref{e.Walters} defines a map $B \colon X \to \mathrm{SL}^{\smallpm}(2,\R)$ that takes nonnegative values. 
Now we can apply \cref{t.main_positive} and find a continuous $A \colon X \to \mathrm{SL}^{\smallpm}(2,\R)$ such that $A|_Y = B|_Y$ and the cocycle $(T,A)$ is monochromatic, i.e.\ $\lambda(T,A,\mu) = \lambda_0$ for every ergodic Borel probability measure $\mu$. The cocycle $(T,A)$ is not uniformly hyperbolic, since its restriction to $Y$ is a Walters cocycle.
\end{proof}

Next, we prove \cref{t.main_SFT}, stated in the Introduction. 

\begin{proof}[{Proof of \cref{t.main_SFT}}]
Now we are given $\lambda_0>0$ and a SFT $T \colon X \to X$ where $X$ is an infinite set.
Consider a spectral decomposition, that is, a decomposition of $X$ into disjoint nonempty compact subsets
\begin{equation}
X = \bigsqcup_{i=1}^m X_i \, , \quad
X_i = \bigsqcup_{j=1}^{n_i} X_{i,j} \quad 
\text{such that} \quad  T(X_{i,j}) = X_{i,j+1 \bmod n_i}
\end{equation}
and each $T^{n_i}|_{X_{i,1}}$ is mixing.
Reindexing if necessary, we can assume that $X_{1,1}$ is not a singleton. 

Just like in the previous proof, we can find a compact invariant set $Y \subset X_{1,1}$ such that the restricted map $T^{n_1}|_Y$ is Veech-like, say with invariant measure $\nu$.
Then we can find a Walters cocycle $(T^{n_1}|_Y,B)$ whose Lyapunov exponent with respect to $\nu$ equals $n_1 \lambda_0$.
Let $\hat{B} \colon X \to \mathrm{SL}^\smallpm(2,\R)$ be a continuous extension of the map $B$ such that $\hat{B}(x)$ is a nonpositive antidiagonal matrix if $x \in X_{1,1}$ and is the identity matrix otherwise. 
Note that the restriction of $T$ to the invariant set $\hat{Y} \coloneqq \bigcup_{j=0}^{n_1-1} T^j(Y)$ is uniquely ergodic with corresponding measure $\hat{\nu} \coloneqq \frac{1}{n_1}\sum_{j=0}^{n_1-1} T^j_* \nu$. 
Furthermore, $\lambda(T,\hat{B},\hat \nu) = \lambda_0$. 
The cocycle $(T|_{\hat{Y}}, \hat{B}|_{\hat{Y}})$ is not uniformly hyperbolic, because $(T^{n_1}|_Y,B)$ is not. 
Applying \cref{t.main_positive}, we find a monochromatic cocycle $(T,A)$, where $A$ coincides with $\hat{B}$ on the set $\hat{Y}$. In particular, the cocycle $(T,A)$ cannot be uniformly hyperbolic.
\end{proof}

Our main results are proved, modulo \cref{t.main_positive,t.existence_Veech}, whose proofs are presented in the next two (entirely independent) sections.

\section{Perturbing nonnegative cocycles}\label{s.main_positive}

\subsection{Summary}\label{ss.positive_summary}

In this section, we will prove \cref{t.main_positive}.
Let us provide some highlights of the proof. 
We define an auxiliary cocycle map $P$ (see formula \eqref{e.P}) which coincides with $B$ on the set $Y$, and is formed by strictly positive matrices on its complement $X \setminus Y$. We call $P$ the \emph{positive perturbation} of $B$.
The matrix $P$ has a very rough modulus of regularity near $Y$: in fact, the off-diagonal entries of the matrix grow very quickly away from $Y$. 

Since the restriction of the cocycle $B$ to the invariant set $Y$ may (and in the interesting cases will) fail to be uniformly hyperbolic, it is possible that the cocycle products along $Y$ display \emph{cancellations} of the form $\|B^{(n+m)}(y)\| \ll \|B^{(m)}(T^n y)\| \, \|B^{(n)}(y)\|$. On the other hand, away from $Y$ our matrices $P$ are strictly positive. As a general principle, positivity prevents cancellations: see e.g.\ \cite[Lemma~2.2]{FanWu} for a simple materialization of this idea. We follow this principle, but the actual implementation is rather delicate. We initially use unique ergodicity of $T|_Y$ to extract some uniform information about the cocycle $B|_Y$: we identify \emph{time scales} where the norms of the products of $B$ matrices behave as expected (see \cref{l.time_scale}). This information is then combined with the quantitative positivity of the matrices $P$ in order to control finite-time expansion rates near $Y$, in a precise formulation that takes into account closeness to $Y$ (see \cref{l.key_estimate}). 
 
Using positivity of the matrices $P$ for a second time, we construct a continuous invariant field of lines on the invariant set $X \setminus Y$ (see \cref{l.bundle}). While the construction is reminiscent of the Perron-Frobenius theorem, the fact that the domain $X \setminus Y$ is noncompact introduces complications. Here we use, once again, the rapid growth of the off-diagonal entries of the matrices $P$ away from $Y$.

At this point, we are reduced to a one-dimensional problem: we study the logarithm of the expansion rate along the invariant line field. If this function were continuously cohomologous to the constant $\lambda_0$ (i.e.\ the Lyapunov exponent), the construction would be complete. What we actually do is solving an approximate cohomological equation for this expansion rate, defined on the noncompact set $X \setminus Y$. This equation, despite degenerating as we approach $Y$, has a vanishing error term (see \cref{l.approx_cob}). Once in possession of the solution, it is simple to define a last perturbation of the cocycle displaying the monochromatic property (see equation~\eqref{e.final_perturb}).

\medskip
\noindent\textbf{Conventions for this section:} 
We will use $(\epsilon_k)_{k \ge 0}$ to denote any sequence of positive numbers that converges to $0$; 
the specific value of the sequence may change with each occurrence.
Similarly, $C$ will denote any positive constant.
That is, $\epsilon_k$ and $C$ are simply positive versions of the symbols $o_k(1)$ and $O(1)$, respectively.

\subsection{Fixing the time scales} 

For the remainder of this section, let $X$, $T$, $Y$, $\nu$, $B$, and $\lambda_0$ be as in the statement of \cref{t.main_positive}. 

\begin{lemma} 
We have the following limits:
\begin{alignat}{2}
\label{e.SAET}
\lambda_0 &= 
\lim_{n \to \infty} \frac{1}{n}  \int_Y &&\log \| B^{(n)}(y) \| \, d\nu(y)  \, , \\
\label{e.SUSAET}
\lambda_0 &= 
\lim_{n \to \infty} \frac{1}{n}  \sup_{y \in Y}  &&\log \| B^{(n)}(y) \| \, .
\end{alignat}
\end{lemma}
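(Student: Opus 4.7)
The plan is to apply Kingman's subadditive ergodic theorem together with the uniform Birkhoff theorem for uniquely ergodic systems to the sequence $f_n(y) := \log \|B^{(n)}(y)\|$ on $Y$. Each $f_n$ is continuous (since $B^{(n)}$ is), nonnegative (every matrix in $\mathrm{SL}^{\smallpm}(2,\R)$ has operator norm at least $1$, as its two singular values multiply to $1$), and the cocycle identity combined with submultiplicativity of the operator norm gives the subadditive relation
\[
f_{n+m}(y) \leq f_n(y) + f_m(T^n y).
\]

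For the integral limit \eqref{e.SAET}, I would invoke Kingman on $(Y,\nu)$. Since $T|_Y$ is uniquely ergodic, $\nu$ is automatically $T$-ergodic, so Kingman yields an a.e.\ constant limit $\tfrac{1}{n} f_n(y) \to \lambda_0$, the constant being $\lambda_0$ by definition of the top Lyapunov exponent. Integrating, Kingman also gives $\lambda_0 = \lim_n \tfrac{1}{n}\int f_n \, d\nu = \inf_n \tfrac{1}{n}\int f_n \, d\nu$, which is exactly \eqref{e.SAET}.

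For the supremum limit \eqref{e.SUSAET}, the inequality $\liminf_n \tfrac{1}{n} \sup_Y f_n \geq \lambda_0$ is immediate since the supremum dominates the $\nu$-average. The content is the reverse $\limsup$ bound. Fix $k \geq 1$. For each phase $s \in \{0,\ldots,k-1\}$, iterating subadditivity along blocks of length $k$ (with $q_s := \lfloor (n-s)/k \rfloor$) yields
\[
f_n(y) \leq 2 M_k + \sum_{j=0}^{q_s - 1} f_k(T^{s + jk} y), \qquad M_k := \max_{0 \leq i < k} \sup_Y f_i \, .
\]
Summing over $s = 0,\ldots,k-1$ and using nonnegativity of $f_k$ to dominate the disjoint index sets by $\{0,\ldots,n-1\}$, I get
\[
k \, f_n(y) \leq 2 k M_k + \sum_{l=0}^{n-1} f_k(T^l y).
\]
At this point I apply the uniform Birkhoff theorem for the uniquely ergodic map $T|_Y$ to the continuous function $f_k$, obtaining $\tfrac{1}{n} \sum_{l=0}^{n-1} f_k(T^l y) \to \int f_k \, d\nu$ uniformly in $y \in Y$. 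Dividing by $kn$ and passing to $\limsup$, then taking $\inf$ over $k$ and appealing to \eqref{e.SAET}, I conclude $\limsup_n \tfrac{1}{n}\sup_Y f_n \leq \lambda_0$.

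The main obstacle is this uniform upper bound, because Kingman alone gives only $\nu$-a.e.\ convergence while we need control at every $y \in Y$ (no exceptional points allowed). It would be tempting to argue by uniform Birkhoff for $T^k$, but in our target application $T^k$ need not be uniquely ergodic: precisely the Veech behavior that makes the construction interesting also makes $T^2$ have two ergodic measures. The phase-averaging over $s$ in the display above is what routes the argument through $T$ itself, where unique ergodicity is available by hypothesis, and is the one genuinely nontrivial step.
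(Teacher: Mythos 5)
Your proof is correct. For the first limit you do exactly what the paper does: apply Kingman's subadditive ergodic theorem to $f_n=\log\|B^{(n)}\|$ on $(Y,\nu)$, noting that unique ergodicity gives ergodicity of $\nu$ and that continuity on the compact set $Y$ gives integrability. For the second limit the paper simply cites the semi-uniform subadditive ergodic theorem (Morris, Theorem~A.3), which holds for arbitrary continuous subadditive cocycles over a compact system with the supremum taken over all invariant measures; you instead prove the needed special case directly, by phase-averaging the block subadditivity estimate over $s=0,\dots,k-1$ and then invoking the uniform Birkhoff theorem for the uniquely ergodic map $T|_Y$ applied to the continuous function $f_k$. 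This is essentially the standard proof of the semi-uniform theorem specialized to the uniquely ergodic setting, and your remark about why one cannot shortcut via uniform Birkhoff for $T^k$ (since $T^k$ need not be uniquely ergodic -- indeed $T^2$ is not, in the intended application) correctly identifies the point that makes the phase-averaging necessary. The only difference in what the two routes buy: the cited theorem is more general (no unique ergodicity needed, at the cost of a $\sup$ over ergodic measures in the conclusion), while your argument is self-contained and exploits the hypothesis actually available here.
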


\begin{proof}
The first limit comes follows the subadditive ergodic theorem \cite[Theorem~3.4.2]{DamanikF}, while the second one follows from the semi-uniform subadditive ergodic theorem \cite[Theorem~A.3]{Morris}.
\end{proof}

The next step is to fix a superexponential sequence $(m_k)$ of \emph{time scales} along which we have some specific control of norms.

\begin{lemma}[Time scales]\label{l.time_scale}
There exists a sequence of positive integers $(m_k)_{k \ge 1}$
such that 
\begin{equation}\label{e.superexp}
	\frac{m_{k+1}}{m_k} \nearrow \infty \, ,
\end{equation}
and for all $y \in Y$, $k \ge 1$, and $n \ge m_{k+1}$, there exist integers 
$q$, $r$, $s$ such that 
\begin{gather}
\label{e.silly_decomposition}
n = r + q m_k + s \, ,
\quad q>0  \, ,
\quad |r|<m_k \, , 
\quad |s|<m_k \, , \quad \text{and} \\
\label{e.mk_property}
\frac{1}{q m_k} \sum_{j=0}^{q-1} \log \| B^{(m_k)}(T^{r + j m_k} (y)) \| > \lambda_0 - \epsilon_k  \, .
\end{gather}
\end{lemma}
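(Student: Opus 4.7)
The plan is an inductive construction combining the subadditive ergodic theorem, unique ergodicity of $T|_Y$, and a pigeonhole over the starting index $r$. Write $f_m(y) := \log \|B^{(m)}(y)\|$, which is continuous in $y$ and subadditive in $m$ in the sense that $f_{m+n}(y) \leq f_n(y) + f_m(T^n y)$. Set $I_m := \int_Y f_m \, d\nu$. By $T$-invariance of $\nu$ and subadditivity of $f_m$, the sequence $(I_m)_{m \geq 1}$ is subadditive, so Fekete's lemma gives $I_m/m \to \inf_m I_m/m$; this limit equals $\lambda_0$ by \eqref{e.SAET}. In particular, $I_m \geq m \lambda_0$ for every $m \geq 1$.

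I build the $m_k$ inductively. Assume $m_1 < \dots < m_k$ have already been chosen. Since $T|_Y$ is uniquely ergodic and $f_{m_k}$ is continuous, its Birkhoff averages converge uniformly on $Y$ to $I_{m_k}$, so I can choose $N_k$ large enough that
$$\biggl| \frac{1}{N}\sum_{i=0}^{N-1} f_{m_k}(T^i y) - I_{m_k} \biggr| < m_k \epsilon_k \qquad \text{for all } y \in Y \text{ and } N \geq N_k,$$
where I let $\epsilon_k$ be as small as I wish. I then set $m_{k+1} > \max\{N_k + m_k,\, k\cdot m_k\}$, which enforces the superexponential growth \eqref{e.superexp}.

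Given $y \in Y$ and $n \geq m_{k+1}$, set $q := \lfloor n/m_k \rfloor$ and $t := n - qm_k \in \{0, \dots, m_k - 1\}$. Then $q \geq 1$ and $qm_k \geq n - m_k \geq N_k$, so the uniform Birkhoff bound applies with $N := qm_k$. For each $r \in \{0, 1, \dots, m_k - 1\}$, setting $s_r := t - r$ yields $n = r + q m_k + s_r$ with $|s_r| \leq m_k - 1 < m_k$, which has the form \eqref{e.silly_decomposition}; the only remaining question is how to choose $r$ so that \eqref{e.mk_property} holds.

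The heart of the argument is the pigeonhole identity
$$\sum_{r=0}^{m_k - 1} \sum_{j=0}^{q-1} f_{m_k}(T^{r+jm_k}y) = \sum_{i=0}^{qm_k - 1} f_{m_k}(T^i y) > qm_k\bigl(I_{m_k} - m_k\epsilon_k\bigr) \geq q m_k^2 \bigl(\lambda_0 - \epsilon_k\bigr).$$
Dividing by the $m_k$ summands of the outer sum, the average inner sum exceeds $qm_k(\lambda_0 - \epsilon_k)$, so some $r^* \in \{0, \dots, m_k - 1\}$ realizes at least this average; dividing further by $qm_k$ yields \eqref{e.mk_property} for this $r^*$ and the corresponding $s_{r^*}$. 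The only delicacy is keeping the parameter hierarchy $\epsilon_k \to 0$, $N_k \gg m_k$, $m_{k+1} \gg N_k$ compatible, but no deeper obstacle arises, since uniform convergence of Birkhoff averages under unique ergodicity is automatic for the continuous function $f_{m_k}$.
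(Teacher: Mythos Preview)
Your proof is correct and follows essentially the same approach as the paper's: uniform convergence of Birkhoff averages of $f_{m_k}$ by unique ergodicity, followed by a pigeonhole over the offset $r \in \{0,\dots,m_k-1\}$. Your use of the exact inequality $I_m \ge m\lambda_0$ (from subadditivity and Fekete) is a small simplification over the paper, which instead tracks an approximate version $I_{m_k}/m_k > \lambda_0 - 1/k$ as an auxiliary inductive hypothesis.
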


\begin{proof} 
The sequence $(m_k)$ will be constructed recursively. 
It will have the following extra property:
\begin{equation}\label{e.extra}
\int_Y \frac{1}{m_k} \log \|B^{(m_k)}\| \, d\nu > \lambda_0 - \frac{1}{k} \, .
\end{equation}

Using \eqref{e.SAET}, we can choose $m_1$ such that \eqref{e.extra} holds with $k=1$.
Now fix $k$ and as an induction hypothesis suppose that a number $m_k$ with property \eqref{e.extra} is already chosen.
Since $T|_Y$ is uniquely ergodic, the Birkhoff averages of the continuous function $\frac{1}{m_k} \log \|B^{(m_k)}\|$ converge uniformly to its integral with respect to the unique invariant probability $\nu$. Therefore, we can find $m_{k+1}$ such that for all $N \ge \frac{m_{k+1}}{2}$ and $y \in Y$, we have 
\begin{equation}\label{e.morning}
\frac{1}{N}\sum_{i=0}^{N-1} \frac{1}{m_k} \log \| B^{(m_k)} (T^i y) \| > \lambda_0 - \frac{2}{k} \, .
\end{equation}
Increasing $m_{k+1}$ if necessary, we assume that $m_{k+1} > (k+1)m_k$ and furthermore property  \eqref{e.extra} holds with $k+1$ in the place of $k$ (which is feasible because of \eqref{e.SAET}). 
This completes the inductive defintion of the sequence $(m_k)$.

Now let us check that the sequence has the desired properties.
Fix $k \ge 1$,  $y \in Y$, and $n \ge m_{k+1}$. Let $q \coloneqq \left\lfloor \frac{n}{m_k} \right\rfloor$ and $N \coloneqq q m_k$. Note that $N > n - m_k \ge \frac{m_{k+1}}{2}$, so  inequality \eqref{e.morning} holds. 
We rewrite the left hand side of this inequality as follows:
\begin{equation}
\frac{1}{N}\sum_{i=0}^{N-1} \frac{1}{m_k} \log \| B^{(m_k)} (T^i y) \|
= \frac{1}{m_k} \sum_{r=0}^{m_k-1} 
\underbrace{\frac{1}{qm_k} \sum_{j=0}^{q-1} \log \| B^{(m_k)}(T^{r + j m_k} (y)) \|}_{\eqcolon a_r}
\, .
\end{equation}
Choose and fix some $r \in \ldbrack 0, m_k - 1 \rdbrack$ such that $a_r > \lambda_0 - \frac{2}{k}$. 
That is, property~\eqref{e.mk_property} holds where $\epsilon_k$ is actually $\frac{2}{k}$. Finally, let $s \coloneqq (n - N) - r$. Since both $n - N$ and $r$ belong  to $\ldbrack 0, m_k - 1 \rdbrack$, we have $|s|<m_k$. Thus relations \eqref{e.silly_decomposition} hold.
\end{proof}

\subsection{Fixing the space scales}

Rescaling the metric if necessary, we assume that
\begin{equation}\label{e.one_tenth}
\diam X < \tfrac{1}{10} \, .
\end{equation}

The next step is to fix a sequence $(\delta_k)_{k \ge 1}$ of \emph{space scales} associated to the sequence of time scales $(m_k)$.
Essentially, we need that $\delta_k \searrow 0$, and that this convergence is extremely fast. 
More precisely, we impose the following conditions:  
\begin{align}
\delta_1 &\coloneqq \diam X \, , \\ 
\label{e.need_for_anchors}
\delta_k &< e^{-e^{m_{k-4}}} \text{ if } k>4 \, , \\ 
\label{e.page26}
\delta_{k+1} &< \delta_k e^{-k \, m_{k+1}} \, .
\end{align}

Consider the sequence of \emph{Bowen metrics} defined as follows:
\begin{equation}
\mathsf{d}_n (x_1,x_2) \coloneqq \max_{-n \le i \le n}  \mathsf{d} (T^i(x_1),T^i(x_2)) \, .
\end{equation}
By uniform continuity of $T^{\pm 1}$, each Bowen metric $\mathsf{d}_n$ is uniformly comparable to the initial metric $\mathsf{d}$. Therefore some additional care in the choice of space scales ensures the following property:
\begin{equation}
\label{e.delta_Bowen}
\text{for all $k>1$ and $x_1, x_2 \in X$, if $\mathsf{d}(x_1,x_2) \le \delta_{k}$, then $\mathsf{d}_{m_{k+1}}(x_1,x_2) < \delta_{k-1}$.}
\end{equation}

The last imposition on the sequence $(\delta_k)$ is the following property:
\begin{multline}
\label{e.twice_norm}
\text{for all $k>0$ and $x_1, x_2 \in X$, if $\mathsf{d}(x_1,x_2) \le \delta_{k}$, then}\\
\text{$\|B^{(n)}(x_1)\| < 2 \|B^{(n)}(x_2)\|$ for all $n \in \ldbrack 0 , m_k \rdbrack$,}
\end{multline}

which can be enforced simply because the matrix map $B$ is uniformly continuous.

\subsection{Positive perturbation}

In the next lemma, we fix an auxiliary continuous function $\phi$ with a very rough
modulus of continuity. 

\begin{lemma}\label{l.phi}
There exists an increasing homeomorphism $\phi \colon [0,\frac{1}{10}] \to [0,\frac{1}{2}]$ such that
\begin{alignat}{2}
\label{e.phi_anchors}
\phi(\delta_k) &= e^{-m_{k-4}} &\quad &\text{for each } k > 4 \quad \text{and}
\\
\label{e.phi_explosion}
\phi(t) &> \frac{1}{- \log t} &\quad &\text{for every } t \in (0, \tfrac{1}{10}] \, .
\end{alignat}
\end{lemma}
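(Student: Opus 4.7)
The plan is to construct $\phi$ by interpolating between the prescribed anchor values $(\delta_k, e^{-m_{k-4}})$ for $k > 4$, extended to reach $\phi(0) = 0$ and $\phi(1/10) = 1/2$. Both sequences $(\delta_k)_{k > 4}$ and $(e^{-m_{k-4}})_{k > 4}$ are strictly decreasing and tend to $0$: the former by \eqref{e.need_for_anchors} and \eqref{e.page26}, the latter because $m_k$ grows superexponentially. At $k = 5$ one has $\delta_5 < 1/10$ and $e^{-m_1} < 1/2$, so the anchors are compatible with the prescribed endpoint values.

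I would define $\phi$ by piecewise log--log linear interpolation. Passing to coordinates $s \coloneqq -\log t$, $u \coloneqq -\log \phi(t)$, the anchor condition becomes $u(s_k) = m_{k-4}$ at $s_k \coloneqq -\log \delta_k$, and I would take $u$ to be the piecewise affine function of $s$ that takes these values; equivalently, on $[\delta_{k+1}, \delta_k]$ the map $\phi$ is a pure power $\phi(t) = C_k t^{\alpha_k}$ with $\alpha_k = (m_{k-3} - m_{k-4})/\log(\delta_k/\delta_{k+1}) > 0$, and an analogous log--log line connects $(\delta_5, e^{-m_1})$ to $(1/10, 1/2)$. Positivity of all slopes makes $\phi$ strictly increasing on each piece; continuity at the anchors is automatic, and setting $\phi(0) \coloneqq 0$ gives continuity at the origin since $\phi(\delta_k) \to 0$. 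So $\phi$ is an increasing homeomorphism $[0, 1/10] \to [0, 1/2]$.

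The real work is the lower bound \eqref{e.phi_explosion}, equivalently $u(s) < \log s$. At each anchor $s_k$, condition \eqref{e.need_for_anchors} gives $s_k > e^{m_{k-4}}$, so $\log s_k > m_{k-4} = u(s_k)$ with enormous margin. To propagate this between consecutive anchors, I would invoke concavity of $\log$: the affine chord $\ell$ of $\log s$ over $[s_k, s_{k+1}]$ satisfies $\ell \le \log s$ there, and since $u$ is also affine on $[s_k, s_{k+1}]$ with $u < \ell$ at both endpoints, the affine function $\ell - u$ is positive at both endpoints and hence positive on the whole interval, yielding $u(s) < \log s$ on $[s_k, s_{k+1}]$. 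The terminal piece $[\delta_5, 1/10]$ is handled by the same argument, noting $\phi(1/10) = 1/2 > 1/\log 10$.

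The main obstacle is precisely this interior propagation: the prescribed values $e^{-m_{k-4}}$ leave little room for the interpolant to drift downward between anchors, and a naive piecewise linear interpolation in the $t$-variable would in fact dip below $1/(-\log t)$ for $t$ deep inside a long interval. The log--log interpolation combined with the concavity of $\log$ is what threads the inequality cleanly through every piece.
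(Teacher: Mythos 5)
Your construction is correct, and the overall strategy coincides with the paper's: prescribe the anchor values $\phi(\delta_k)=e^{-m_{k-4}}$, interpolate monotonically in a transformed coordinate, verify \eqref{e.phi_explosion} at the anchors via \eqref{e.need_for_anchors} (which gives $-\log\delta_k > e^{m_{k-4}}$, hence $1/(-\log\delta_k) < e^{-m_{k-4}}$ with huge margin), and propagate between anchors by an affine comparison. The only real difference is the choice of coordinates. The paper sets $s = 1/(-\log t)$ and takes $\psi(s)=\phi(t)$ piecewise affine in $s$; in that chart the function to be dominated, $1/(-\log t)$, becomes the identity $s\mapsto s$, so ``affine minus affine, positive at both endpoints'' settles each subinterval with no further input. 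You work in log--log coordinates, where $\phi$ is a piecewise power law and the target becomes $u(s)<\log s$; you then need the concavity of $\log$ to replace $\log s$ by its chord before running the same two-endpoint argument. Both routes are valid and of comparable length; the paper's coordinate choice just makes the between-anchor step tautological, while yours costs one extra (standard) convexity observation. Your checks of monotonicity of the anchors, compatibility with the endpoint values $\phi(0)=0$ and $\phi(1/10)=1/2$, and continuity at $0$ are all in order.
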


\begin{proof}
Let $\psi$ be the homeomorphism from $[0,\tfrac{1}{\log 10}]$ to $[0, \tfrac{1}{2}]$ 
defined as follows.
We fix the following \emph{anchor points}:
\begin{equation}
\psi(0) \coloneqq 0 \, , \quad 
\psi\big(\tfrac{1}{\log 10} \big) \coloneqq \tfrac{1}{2} \, , \quad 
\psi\big(\tfrac{1}{-\log \delta_k}\big) \coloneqq e^{-m_{k-4}} \text{ for all } k>4  \, ;
\end{equation} 
then we interpolate linearly between any two consecutive anchor points. 
It follows from \eqref{e.need_for_anchors} that $\psi(s)>s$ for all $s \in (0,\tfrac{1}{\log 10}]$. Then the function $\phi(t) \coloneqq \psi \big(\tfrac{1}{-\log t}\big)$ has all the desired properties.
\end{proof}

Define the following continuous functions of $x \in X$:
\begin{align}
\label{e.theta}
\theta(x) &\coloneqq \phi(\mathsf{d}(x,Y)) \, , \\ 
\label{e.H}
H(x) &\coloneqq 
\begin{pmatrix}
\cosh \theta(x) & \sinh \theta(x) \\ 
\sinh \theta(x) & \cosh \theta(x) 
\end{pmatrix} \, , \quad \text{and} \\
\label{e.P} 
P(x) &\coloneqq B(x) H(x) \, .
\end{align}
If $x \in Y$, then $H(x) = \mathrm{Id}$ and $P(x) = B(x)$, while if $x \not\in Y$, then the matrices $H(x)$ and $P(x)$ are (entrywise) positive.
In both cases, $\det H(x) = 1$ and $\det P(x) = \pm 1$.

The cocycle $(T,P)$ is called the \emph{positive perturbation} of the cocycle $(T,B)$. (In reality, it is a small perturbation only near $Y$; away from $Y$ the perturbation is rather large). 

\medskip

The following simple bound relies fundamentally on positivity and will be essential in the proof of \cref{l.key_estimate} below.

\begin{lemma}\label{l.positivity}
For all $x \in X$ and $n \ge 0$, if $v$ is a nonnegative vector, then
\begin{equation}\label{e.positivity}
\|P^{(n)}(x) v \| \ge \theta(x) \| B^{(n)}(x) \| \, \| v\| \, .
\end{equation}
\end{lemma}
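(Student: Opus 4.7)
The plan is to exploit that, once we multiply a nonnegative vector by the ``mixing'' factor $H(x)$, both of its coordinates become at least $\sinh\theta(x)\|v\|$, after which nonnegativity of the remaining matrices propagates this lower bound entrywise all the way through $P^{(n)}(x)$. The argument splits into an entrywise stage and a norm-comparison stage.

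First, a direct computation using $\cosh\theta\ge\sinh\theta\ge\theta\ge 0$ (valid because $\theta(x)\in[0,\tfrac12]$) shows that for any nonnegative $v=(v_1,v_2)^\top$,
\[
H(x)\,v \;\ge\; \sinh\theta(x)\,(v_1+v_2)\,\mathbf{1}
\]
entrywise, where $\mathbf{1}=(1,1)^\top$. From $\cosh\theta\ge 1$ and $\sinh\theta\ge 0$ one also has $H(y)w\ge w$ entrywise for every nonnegative $w$.

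Next, I propagate through iterates. Writing $P^{(n)}(x)v = P(T^{n-1}x)\cdots P(Tx)\cdot B(x)H(x)v$, applying the first inequality and then the nonnegative matrix $B(x)$ (which preserves the entrywise order on nonnegative vectors) yields $P(x)v \ge \sinh\theta(x)(v_1+v_2)\,B(x)\mathbf{1}$ entrywise. An easy induction on $j\in\{1,\dots,n-1\}$, using $H(T^j x)w\ge w$ on the nonnegative previous vector and then multiplying by the nonnegative matrix $B(T^j x)$, upgrades this to
\[
P^{(n)}(x)\,v \;\ge\; \sinh\theta(x)\,(v_1+v_2)\,B^{(n)}(x)\,\mathbf{1}
\]
entrywise.

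Finally, I take Euclidean norms. Both sides are nonnegative vectors, so monotonicity of $\|\cdot\|_2$ on the nonnegative orthant applies. Using $v_1+v_2=\|v\|_1\ge\|v\|_2$ and $\sinh\theta\ge\theta$, the bound reduces to the claim $\|M\mathbf{1}\|_2\ge\|M\|_2$ for the nonnegative matrix $M=B^{(n)}(x)$ (nonnegative as a product of nonnegative matrices). This follows from
\[
\|M\mathbf{1}\|_2^2 \;=\; \sum_i\Bigl(\sum_j m_{ij}\Bigr)^{\!2} \;\ge\; \sum_{i,j}m_{ij}^2 \;=\; \|M\|_F^2 \;\ge\; \|M\|_2^2,
\]
obtained by expanding each square and discarding the nonnegative cross terms. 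The case $n=0$ is immediate since $\theta(x)\le\tfrac12<1$. The only real ``idea'' here is the final comparison $\|M\mathbf{1}\|_2\ge\|M\|_2$ for nonnegative $M$; the rest is bookkeeping that crucially relies on the hypothesis, used throughout \cref{t.main_positive}, that $B$ takes values in nonnegative matrices.
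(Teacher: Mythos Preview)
Your proof is correct and follows essentially the same approach as the paper: both arguments use the entrywise bounds $H(x)\ge\theta(x)\,U$ and $H(T^j x)\ge\mathrm{Id}$ to propagate monotonicity through the product and arrive at $P^{(n)}(x)v \ge \theta(x)\|v\|\,B^{(n)}(x)\mathbf{1}$ entrywise. The only difference is in the final norm comparison: the paper observes that the operator norm of the nonnegative matrix $B^{(n)}(x)$ is attained at some nonnegative unit vector $w\le\mathbf{1}$, whence $\|B^{(n)}(x)\mathbf{1}\|\ge\|B^{(n)}(x)w\|=\|B^{(n)}(x)\|$, whereas you obtain the same inequality via the Frobenius-norm route $\|M\mathbf{1}\|_2\ge\|M\|_F\ge\|M\|_2$; both are equally elementary.
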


\begin{proof}
Since $\cosh t \ge \sinh t \ge t$ for every $t \ge 0$, we have 
\begin{equation}
H(x) \ge \theta(x) U \, ,  \quad \text{where} \quad 
U \coloneqq \begin{pmatrix} 1 & 1 \\ 1 & 1 \end{pmatrix} 
\end{equation}
and we are using the entrywise partial order on the set of matrices.
On the other hand, since $\cosh t \ge 1$, we have $H(T^i x) \ge \mathrm{Id}$ for every $i$.
Since product of nonnegative matrices is increasing with respect to the partial order, it follows that
\begin{align}
P^{(n)}(x) 
&= B(T^{n-1} x) H(T^{n-1} x) \cdots B(T x) H(T x) B(x) H(x) \\ 
&\ge \theta(x) B(T^{n-1} x) \cdots B(T x) B(x) U \\ 
&= \theta(x) B^{(n)}(x) U \, .
\end{align}
Next, note that
\begin{equation}
U v \ge \|v\| u \, ,  \quad \text{where} \quad 
u \coloneqq \begin{pmatrix} 1 \\ 1 \end{pmatrix} \, .
\end{equation}
Let $w$ be a nonnegative unit (column-)vector such that $\|B^{(n)}(x) w\| = \|B^{(n)}(x)\|$. 
Note that $w \le u$. 
Combining those observations,
\begin{align}
P^{(n)}(x) 
&\ge \theta(x) B^{(n)}(x) U v\\	
&\ge \theta(x) \|v\| \, B^{(n)}(x) u\\	
&\ge \theta(x) \|v\| \, B^{(n)}(x) w \, .
\end{align}
Taking norms, we obtain the desired inequality.
\end{proof}

\subsection{Avoiding cancellations}

The next step establishes the following lemma which
as explained informally in \cref{ss.positive_summary}, uses positivity in a fundamental way and is a central component of our proof:

\begin{lemma}\label{l.key_estimate}
For all $k \in \N$, $x \in X$, $n \in \Z_\smallplus$, and $v$ a nonnegative unit vector,
if 
\begin{equation}\label{e.two_ranges}
\delta_{k+1} \le \mathsf{d}(x,Y) \le \delta_{k} \quad \text{and} \quad
m_{k} \le n \le m_{k+1} \, ,
\end{equation}
then 
\begin{equation}\label{e.key_estimate}
\left| \frac{1}{n} \log \big\| P^{(n)}(x) v \big\| - \lambda_0 \right| < \epsilon_k \, .
\end{equation}
\end{lemma}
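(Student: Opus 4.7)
The plan is to reduce the estimate to Birkhoff-type sums along orbits in $Y$, combining the time-scale decomposition of \cref{l.time_scale} with the positivity bound of \cref{l.positivity}. First, pick $y \in Y$ realizing $\mathsf{d}(x,y) = \mathsf{d}(x,Y) \le \delta_k$; by \eqref{e.delta_Bowen} the orbit of $x$ shadows that of $y$ at resolution $\delta_{k-1}$ over the horizon $[-m_{k+1},m_{k+1}]$. Since $n \ge m_k = m_{(k-1)+1}$, apply \cref{l.time_scale} at index $k-1$ to $y$ to obtain $q>0$, $0 \le r < m_{k-1}$, $|s|<m_{k-1}$ with $n=r+qm_{k-1}+s$ and $\tfrac{1}{qm_{k-1}}\sum_{j=0}^{q-1}\log\|B^{(m_{k-1})}(\tilde y_j)\|>\lambda_0-\epsilon_{k-1}$, where $\tilde y_j \coloneqq T^{r+jm_{k-1}}y$; set also $z_j \coloneqq T^{r+jm_{k-1}}x$.

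For the lower bound, since $P$ is entrywise nonnegative and $v$ is in the nonnegative cone, every iterate $P^{(i)}(x)v$ stays nonnegative, so I factor $P^{(n)}(x)v$ along the blocks and iterate \cref{l.positivity} on each block of length $m_{k-1}$; the boundary factors $P^{(\pm r)}, P^{(\pm s)}$ contribute only an $O(m_{k-1})$ log-error (since $|\det P|=1$ makes $\|P^{-1}\|=\|P\|$ bounded on the compact space, and $|r|,|s|<m_{k-1}$). The Bowen closeness $\mathsf{d}_{m_{k-1}}(z_j,\tilde y_j)<\delta_{k-1}$ combined with \eqref{e.twice_norm} at index $k-1$ yields $\|B^{(m_{k-1})}(z_j)\|\ge \tfrac12\|B^{(m_{k-1})}(\tilde y_j)\|$, and summing against the Birkhoff estimate from \cref{l.time_scale} controls the norm sum by $qm_{k-1}(\lambda_0-\epsilon_{k-1})-q\log 2$.

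The delicate piece---the main obstacle---is the $\theta$-sum $\sum_j \log\theta(z_j)$ produced by block-wise positivity: nothing \emph{a priori} prevents $\mathsf{d}(z_j,Y)$ from collapsing along the orbit, which would drag $\log\theta(z_j)$ to $-\infty$. The biLipschitz hypothesis on $T$ rules this out: with $L$ a common Lipschitz constant for $T^{\pm 1}$, invariance of $Y$ gives $\mathsf{d}(z_j,Y)\ge L^{-(r+jm_{k-1})}\mathsf{d}(x,Y)\ge L^{-m_{k+1}}\delta_{k+1}$, and \eqref{e.phi_explosion} converts this into $\log\theta(z_j)\ge -\log(|\log\delta_{k+1}|+m_{k+1}\log L)=-O(\log m_{k+1})$. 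A mild strengthening of the recursion in \cref{l.time_scale}, imposing e.g.\ $\log m_{k+1}\le m_{k-1}/k^2$ (easily compatible with all other requirements), forces $\log m_{k+1}/m_{k-1}\to 0$, so the $\theta$ contribution is $o(n)$. Dividing by $n$ and using $qm_{k-1}/n\to 1$ yields the lower half of \eqref{e.key_estimate}.

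For the upper bound, $\|P^{(n)}(x)v\|\le \|P^{(n)}(x)\| \le C^{m_{k-1}}\prod_j\|P^{(m_{k-1})}(z_j)\|$; the super-geometric decay of $\delta_{k-1}$ forced by \eqref{e.page26} dominates the Lipschitz constant of $z\mapsto P^{(m_{k-1})}(z)$ (exponential in $m_{k-1}$), so together with the identity $P^{(m_{k-1})}|_Y=B^{(m_{k-1})}|_Y$ (because $\theta\equiv 0$ on $Y$), continuity gives $\|P^{(m_{k-1})}(z_j)\|\le 2\|B^{(m_{k-1})}(\tilde y_j)\|$. The semi-uniform estimate \eqref{e.SUSAET} supplies $\|B^{(m_{k-1})}(\tilde y_j)\|\le e^{m_{k-1}(\lambda_0+\epsilon_{k-1})}$, and the summation delivers the matching upper half of \eqref{e.key_estimate}.
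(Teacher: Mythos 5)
Your overall architecture for the lower bound matches the paper's (shadow $x$ by $y\in Y$, decompose $n$ via \cref{l.time_scale} at index $k-1$, iterate \cref{l.positivity} block-wise, compare $\|B^{(m_{k-1})}(z_j)\|$ with $\|B^{(m_{k-1})}(\tilde y_j)\|$ via \eqref{e.twice_norm}), but the step you yourself flag as delicate --- the $\theta$-sum --- is where your argument breaks. You bound $\theta(z_j)$ through \eqref{e.phi_explosion}, getting $\log\theta(z_j)\ge -\log\bigl(|\log\delta_{k+1}|+m_{k+1}\log L\bigr)$, and claim this is $-O(\log m_{k+1})$. But condition \eqref{e.need_for_anchors} forces $\delta_{k+1}<e^{-e^{m_{k-3}}}$, so $|\log\delta_{k+1}|>e^{m_{k-3}}\gg m_{k+1}$, and since the construction imposes only \emph{upper} bounds on the $\delta$'s, $|\log\delta_{k+1}|$ has no controlled upper bound at all; your per-block error $-\log|\log\delta_{k+1}|$ need not be $o(m_{k-1})$. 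The fix is not a strengthened recursion on $(m_k)$ but the anchor values: the paper shows $\mathsf{d}(T^ix,Y)\ge\delta_{k+2}$ for $0\le i\le m_{k+1}$ (via \eqref{e.delta_Bowen}, or equivalently via your biLipschitz estimate combined with \eqref{e.page26}), whence $\theta(z_j)\ge\phi(\delta_{k+2})=e^{-m_{k-2}}$ by \eqref{e.phi_anchors}, giving a total contribution $-qm_{k-2}=-\epsilon_k n$ \emph{independently} of how small $\delta_{k+2}$ actually is.

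Your upper bound also fails as written. The map $z\mapsto P(z)$ is not Lipschitz ($\phi$ is built to have an extremely rough modulus of continuity), so there is no ``Lipschitz constant of $z\mapsto P^{(m_{k-1})}(z)$ exponential in $m_{k-1}$''; moreover, one cannot enforce a version of \eqref{e.twice_norm} for $P$ by shrinking the $\delta_k$'s, since $P$ itself is only defined after the entire sequence $(\delta_j)$ is fixed. The honest comparison between $P^{(m)}$ and $B^{(m)}$ is the telescoping sum, whose error is of order $e^{Cm}\cdot m\cdot\sup_i\|H(T^iz_j)-\mathrm{Id}\|\le e^{Cm}\,m\,e^{-m_{k-5}}$ when the orbit stays $\delta_{k-1}$-close to $Y$; with your block length $m=m_{k-1}\gg m_{k-5}$ this error is astronomically large, not small. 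This is exactly why the paper proves the upper estimate separately (\cref{l.trouxa}) with the much shorter block length $\ell=m_{k-6}$, for which $e^{C\ell}\,\ell\,e^{-m_{k-5}}<1$, and then invokes \eqref{e.SUSAET} at that scale. Both gaps are repairable, but both require using the specific anchor relations \eqref{e.phi_anchors} between the space scales and the time scales, which your proposal bypasses.
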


Recall that, as explained at the end of \cref{ss.positive_summary}, $(\epsilon_k)$ denotes any positive sequence converging to $0$, and its specific value may (and will) change with each occurrence. 

\begin{proof}[Proof of \cref{l.key_estimate}]
We can assume that $k>2$.
Fix a point $x\in X$ such that $\delta_{k+1} \le \mathsf{d}(x,Y) \le \delta_{k}$.
Let $y \in Y$ be such that $\mathsf{d}(x,y) = \mathsf{d}(x,Y)$.
We begin the proof by establishing two simple but crucial inequalities, illustrated in \cref{f.far_close}.
Since $\mathsf{d}(x,y) \le \delta_k$, property~\eqref{e.delta_Bowen} implies that
\begin{equation}\label{e.dont_go_too_far}
\mathsf{d}_{m_{k+1}} (x,y) \le \delta_{k-1} \, .
\end{equation}
On the other hand, 
we claim that for all integers $i$ in the range $|i| \le m_{k+1}$,
\begin{equation}\label{e.dont_get_too_close}
\mathsf{d}(T^i x, Y) \ge \delta_{k+2} \, .
\end{equation}
Indeed, if the inequality fails, then there exists $y' \in Y$ such that $\mathsf{d}(T^i x, y') < \delta_{k+2}$. In that case, property~\eqref{e.delta_Bowen} gives $\mathsf{d}_{m_{k+3}}(T^i x, y') < \delta_{k+1}$. In particular, since $|i| \le m_{k+1} < m_{k+3}$, we have $\mathsf{d}(x,T^{-i} y') < \delta_{k+1}$, contradicting the fact that $\mathsf{d}(x,Y) \ge \delta_{k+1}$.

\begin{figure}
\begin{tikzpicture}[scale=1.5]
\filldraw[fill=red!20,draw=red!50!black] plot[smooth cycle] coordinates {(-2.7,0) (-2, 1) (2,1) (2.7,0) (2,-1) (-2,-1)};
\draw[red] (-1.5,-.25) node{$Y$};

\node[red] (y0) at (-2.0,1.00) {\tiny $\bullet$};
\node[red] (y1) at (-1.6,0.96) {\tiny $\bullet$};
\node[red] (y2) at (-1.2,0.90) {\tiny $\bullet$};
\node[red] (y3) at (-0.8,0.82) {\tiny $\bullet$};
\node[red] (y4) at (-0.4,0.72) {\tiny $\bullet$};
\node[red] (y5) at ( 0.0,0.60) {\tiny $\bullet$};
\node[red] (y6) at ( 0.4,0.46) {\tiny $\bullet$};
\node[red] (y7) at ( 0.8,0.30) {\tiny $\bullet$};
\node[red] (y8) at ( 1.2,0.12) {\tiny $\bullet$};
\node[red] (y9) at ( 1.6,-0.08) {\tiny $\bullet$};
\node[red](y10) at ( 2.0,-0.30) {\tiny $\bullet$};
\node(foot) 	at ( 0.0,1.1) {};
\node[blue] (x0) at (-2.0,2.20) {\tiny $\bullet$};
\node[blue] (x1) at (-1.6,2.04) {\tiny $\bullet$};
\node[blue] (x2) at (-1.2,1.92) {\tiny $\bullet$};
\node[blue] (x3) at (-0.8,1.84) {\tiny $\bullet$};
\node[blue] (x4) at (-0.4,1.80) {\tiny $\bullet$};
\node[blue] (x5) at ( 0.0,1.80) {\tiny $\bullet$};
\node[blue] (x6) at ( 0.4,1.84) {\tiny $\bullet$};
\node[blue] (x7) at ( 0.8,1.92) {\tiny $\bullet$};
\node[blue] (x8) at ( 1.2,2.04) {\tiny $\bullet$};
\node[blue] (x9) at ( 1.6,2.20) {\tiny $\bullet$};
\node[blue](x10) at ( 2.0,2.40) {\tiny $\bullet$};

\tikzstyle{flechav}=[-{Straight Barb[length=0.5mm]},thin,red];
\draw[flechav] (y0) node[below] {$y$} to (y1);
\draw[flechav] (y1) to (y2);
\draw[flechav] (y2) to (y3);
\draw[flechav] (y3) to (y4);
\draw[flechav] (y4) to (y5);
\draw[flechav] (y5) to (y6);
\draw[flechav] (y6) to (y7);
\draw[flechav] (y7) to (y8);
\draw[flechav] (y8) to (y9);
\draw[flechav] (y9) to (y10) node[below] {$T^{m_{k+1}} y$};

\tikzstyle{flechaa}=[-{Straight Barb[length=0.5mm]},thin,blue];
\draw[flechaa] (x0) node[left] {$x$} to (x1);
\draw[flechaa] (x1) to (x2);
\draw[flechaa] (x2) to (x3);
\draw[flechaa] (x3) to (x4);
\draw[flechaa] (x4) to (x5);
\draw[flechaa] (x5) to (x6);
\draw[flechaa] (x6) to (x7);
\draw[flechaa] (x7) to (x8);
\draw[flechaa] (x8) to (x9);
\draw[flechaa] (x9) to (x10) node[right] {$T^{m_k+1} x$};

\tikzstyle{flechadist}=[shorten >=-3pt, shorten <=-3pt, {Stealth}-{Stealth}];
\draw[flechadist] (x0) -- (y0)	node[midway,left]		{$\delta_{k+1} \le $} 
								node[midway,right]		{$\le \delta_{k}$};
\draw[flechadist] (x5) -- (foot)node[midway,left]		{$\delta_{k+2} \le $};
\draw[flechadist] (x9) -- (y9)	node[near start,right]	{$\le \delta_{k-1}$};
\end{tikzpicture}
\caption{Inequality \eqref{e.dont_get_too_close} prevents the points $x,Tx,\dots,T^{m_{k+1}} x$ from getting too close to $Y$, while 
inequality \eqref{e.dont_go_too_far} guarantees that they never go too far away from the respective points $y,Ty,\dots,T^{m_{k+1}} y$.}\label{f.far_close}
\end{figure}

Applying \cref{l.time_scale}, we obtain a decomposition $n = r + q m_{k-1} + s$ where $q>0$, $|r|<m_{k-1}$, $|s|<m_{k-1}$, and 
and 
\begin{equation}\label{e.comeco}
\frac{1}{q m_{k-1}} \sum_{j=0}^{q-1} \log \| B^{(m_{k-1})}(T^{r + j m_{k-1}} (y)) \| > \lambda_0 - \epsilon_k  \, .
\end{equation}

Consider points $x_j \coloneqq T^{r + j m_{k-1}} (x)$. 
We factorize
\begin{equation}
P^{(n)}(x) = R_2 P_{q-1} \cdots P_1 P_0 R_1 \, ,
\end{equation}
where $R_1 \coloneqq P^{(r)}(x)$, $P_j \coloneqq P^{(m_{k-1})}(x_j)$, and 
$R_2 \coloneqq P^{(s)}(x_q)$.

Let $\theta_0 \coloneqq \phi(\delta_{k+2})$, where the function $\phi$ comes from \cref{l.phi}. 
It follows from \eqref{e.dont_get_too_close} that $\theta(x_j) \ge \theta_0$.
Therefore, by \cref{l.positivity}, for all nonnegative vectors $w$, we have
\begin{equation}
\| P_j w\| \ge \theta_0 \|B_j\| \|w\| \, , \quad \text{where} \quad B_j \coloneqq B^{(m_{k-1})}(x_j) \, .
\end{equation}
Applying this property recursively, we obtain
\begin{equation}\label{e.puzzle1}
\| P_{q-1} \cdots P_0 w\| \ge \theta_0^q \|B_{q-1}\| \cdots \|B_0\| \|w\| \, .
\end{equation}
By \eqref{e.phi_anchors}, we actually have $\theta_0 = e^{-m_{k-2}}$, so 
\begin{equation}\label{e.puzzle2}
	\theta_0^q  = e^{-q m_{k-2}} > e^{-\epsilon_k q m_{k-1}} > e^{-\epsilon_k n} \, ,
\end{equation}

Next, consider the points $y_j \coloneqq T^{r + j m_{k-1}} (y)$ and the corresponding matrix products $\tilde{B}_j \coloneqq B^{(m_{k-1})}(y_j)$.
By \eqref{e.dont_go_too_far} and \eqref{e.twice_norm}, 
we have 
\begin{equation}\label{e.puzzle3}
\|B_j\| > \tfrac{1}{2} \| \tilde{B}_j\| 
\quad \text{for each } j \in \ldbrack 0, q-1 \rdbrack \, .
\end{equation}
On the other hand, by \eqref{e.comeco},
$\|\tilde{B}_{q-1}\| \cdots \|\tilde{B}_0\| > e^{(\lambda_0 - \epsilon_k)qm_{k-1}}$.
Observe  that $|q m_{k-1} - n| < 2m_{k-1} = \epsilon_k n$.
We combine the bounds above: given a nonnegative unit vector $v$, letting $w \coloneqq \frac{R_1 v}{\|R_1 v\|}$, we have
\begin{align}
\|P^{(n)}(x) v \| 
&\ge \|R_2^{-1} \|^{-1} \,  \|P_{q-1} \cdots P_0 w\|  \, \|R_1^{-1} \|^{-1} \\
&> e^{-Cm_{k-1}} \, \theta_0^q \,  \|B_{q-1}\| \cdots \|B_0\| \\ 
&> e^{-Cm_{k-1}} \, (\theta_0/2)^q \, \|\tilde{B}_{q-1}\| \cdots \|\tilde{B}_0\|  \\
&> e^{(\lambda_0 - \epsilon_k)n} \, .
\end{align}
That is, $\frac{1}{n} \log \|P^{(n)}(x) v \|$ admits the lower bound $\lambda_0 - \epsilon_k$, which confirms part of the desired inequality \eqref{e.key_estimate}.
Obtaining an upper bound uses different (and simpler) arguments, so we will prove it separately in \cref{l.trouxa}.
\end{proof}

\begin{lemma}\label{l.trouxa}
If $x \in X$, $k \in \N$, $n \ge m_k$, and $y \in Y$ are such that $\mathsf{d}_n(x,y) \le \delta_{k-1}$, then $\frac{1}{n} \log \| P^{(n)}(x) \| < \lambda_0  + \epsilon_k$.
\end{lemma}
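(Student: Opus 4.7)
The plan is to compare $P^{(n)}(x)$ with $B^{(n)}(y)$ via a telescoping identity followed by a discrete Gronwall bound.

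First, the hypothesis $\mathsf{d}_n(x,y)\le \delta_{k-1}$ gives $\mathsf{d}(T^i x,T^i y)\le \delta_{k-1}$ for every $|i|\le n$, so $\mathsf{d}(T^i x, Y) \le \delta_{k-1}$ and hence $\theta(T^i x) \le \phi(\delta_{k-1})$; by the anchor condition \eqref{e.phi_anchors}, for $k>5$ this is the super-exponentially small quantity $e^{-m_{k-5}}$. Together with the expansion $H(z) = I + O(\theta(z))$ and the uniform continuity of $B$, this produces a pointwise bound
\[
\|P(T^i x) - B(T^i y)\| \le \eta_k, \qquad 0\le i \le n-1,
\]
where $\eta_k \to 0$ at a rate controlled by $\phi(\delta_{k-1})$ and by the modulus of continuity of $B$.

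Second, the standard telescoping expansion
\[
P^{(\ell)}(x) = B^{(\ell)}(y) + \sum_{j=0}^{\ell-1} B^{(\ell-j-1)}(T^{j+1}y) \, \bigl( P(T^j x) - B(T^j y) \bigr) \, P^{(j)}(x)
\]
holds for every $0\le \ell\le n$. The semi-uniform subadditive ergodic theorem \eqref{e.SUSAET} applied on $Y$ provides a constant $C_1=C_1(\epsilon_k)$ with $\|B^{(m)}(y')\|\le C_1 e^{(\lambda_0+\epsilon_k/2)m}$ for every $y'\in Y$ and every $m\ge 0$ (combining the asymptotic bound for large $m$ with the trivial bound $\|B\|_\infty^m$ for small $m$). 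Substituting these bounds into the display above and setting $V_\ell \coloneqq e^{-(\lambda_0+\epsilon_k/2)\ell}\|P^{(\ell)}(x)\|$ produces the discrete Gronwall inequality
\[
V_\ell \le C_1 + C_1\eta_k e^{-(\lambda_0+\epsilon_k/2)}\sum_{j=0}^{\ell-1} V_j ,
\]
whence $V_n \le C_1 \exp\bigl(C_1\eta_k e^{-(\lambda_0+\epsilon_k/2)} n\bigr)$, and therefore
\[
\tfrac{1}{n}\log \|P^{(n)}(x)\| \,\le\, \lambda_0 + \tfrac{\epsilon_k}{2} + \tfrac{\log C_1}{n} + C_1\eta_k e^{-(\lambda_0+\epsilon_k/2)} .
\]

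The hard part is then checking that the last two summands are absorbed into $\epsilon_k/2$. The contribution $\log C_1/n$ is harmless since $n\ge m_k$ and $m_k\to\infty$. The amplification factor $C_1\eta_k e^{-(\lambda_0+\epsilon_k/2)}$ is the delicate term: $C_1$ may grow as $\epsilon_k\to 0$, but $\eta_k = O(e^{-m_{k-5}})$ is super-exponentially small, and the super-exponential growth of $(m_k)$ imposed in \cref{l.time_scale} dominates any fixed size of $C_1$ once $k$ is large enough. So the main technical issue is bookkeeping of the nested parameter dependencies among $(m_k)$, $(\delta_k)$ and $\phi$, rather than any conceptual obstruction.
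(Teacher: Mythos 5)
Your argument is sound in its main structure but takes a genuinely different route from the paper's. The paper does not telescope over the full time interval: it cuts $[0,n]$ into blocks of the much shorter length $\ell=m_{k-6}$, telescopes only \emph{within} each block (where the accumulated error $e^{C\ell}\cdot\ell\cdot e^{-m_{k-5}}$ is already less than $1$, so no Gronwall inequality is needed), compares $\|B^{(\ell)}(x_j)\|$ with $\|B^{(\ell)}(y_j)\|$ via the pre-arranged property \eqref{e.twice_norm} (losing only a factor $2$ per block), and invokes \eqref{e.SUSAET} once, at the single scale $\ell$. You instead telescope globally over length $n$ and control the accumulated error by a discrete Gronwall inequality; the telescoping identity and the recursion for $V_\ell$ are correct, and this buys a shorter argument at the price of needing the uniform bound $\|B^{(m)}(y')\|\le C_1(\epsilon)e^{(\lambda_0+\epsilon)m}$ for \emph{all} $m\ge 0$ rather than at one scale. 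There is, however, one imprecision in your endgame. The claim $\eta_k=O(e^{-m_{k-5}})$ is not justified: since you compare $P(T^jx)$ with $B(T^jy)$ at \emph{different} base points, $\eta_k$ must also contain $\omega_B(\delta_{k-1})$, where $\omega_B$ is the modulus of continuity of the merely continuous map $B$, for which no decay rate is available; likewise the growth of $C_1(\epsilon)$ as $\epsilon\to0$ is not quantified, so the proposed "super-exponential decay beats the constants" bookkeeping cannot literally be carried out. The repair is standard and one line: fix $\epsilon>0$ first, so that $C_1(\epsilon)$ is a fixed constant; since $\eta_k\to0$ and $n\ge m_k\to\infty$, your final display yields $\tfrac1n\log\|P^{(n)}(x)\|\le\lambda_0+\epsilon+o(1)$ as $k\to\infty$, and letting $\epsilon\to0$ gives the statement in the paper's $(\epsilon_k)$ convention. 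With that substitution your proof is complete.
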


\begin{proof}
We can assume that $k>6$.
Let $\ell \coloneqq m_{k-6}$. 
Perform Euclidean division:
\begin{equation}
n = q \ell + r \, , \quad q>0 \, , \quad 0 \le r < \ell \, .
\end{equation}
For $j \in \ldbrack 0, q \rdbrack$, let $x_j \coloneqq T^{j \ell}(x)$.
Consider the factorization
\begin{equation}\label{e.Euclid_strikes_again}
P^{(n)}(x) = P^{(r)}(x_q) P^{(\ell)}(x_{q-1}) \cdots P^{(\ell)}(x_1) P^{(\ell)}(x_0) \, .
\end{equation}
We will estimate the norm of each factor, starting with 
\begin{equation}\label{e.zzz}
\|P^{(r)}(x_q)\| < e^{Cr} < e^{C\ell} = e^{Cm_{k-6}}  < e^{\epsilon_k m_k} \le e^{\epsilon_k n} \, .
\end{equation}
To bound the other factors, fix $j \in \ldbrack 0, q-1 \rdbrack$.
Note the telescopic sum
\begin{equation}
P^{(\ell)}(x_j) - B^{(\ell)}(x_j) = \sum_{i=0}^{\ell-1} P^{(\ell-i-1)}(T^{i+1} x_j) \left[ P(T^i x_j) - B(T^i x_j) \right] B^{(i)}(x_j) \, .
\end{equation}
Taking norms,
\begin{align}
\left\| P^{(\ell)}(x_j) - B^{(\ell)}(x_j) \right\|  
&\le e^{C(\ell-1)}\sum_{i=0}^{\ell-1} \left\| P(T^i x_j) - B(T^i x_j) \right\| \\ 
&\le e^{C\ell}\sum_{i=0}^{\ell-1} \left\| H(T^i x_j) - \mathrm{Id} \right\| \\ 
&\le e^{C\ell}\sum_{i=0}^{\ell-1} C \theta (T^i x_j) \, . \label{e.ugly}
\end{align}
Since $\mathsf{d}(T^i x_j, Y) \le \mathsf{d}_n(x,y) \le \delta_{k-1}$, using \eqref{e.phi_anchors} we get $\theta (T^i x_j) \le \phi(\delta_{k-1}) = e^{-m_{k-5}}$. Since $\ell = m_{k-6} = \epsilon_k m_{k-5}$, the quantity in \eqref{e.ugly} is less than $1$ for large enough $k$.
In particular,
\begin{equation}
\| P^{(\ell)}(x_j) \| < 1 + \| B^{(\ell)}(x_j) \| \le 2 \| B^{(\ell)}(x_j) \| 
\end{equation}
for all sufficiently large $k$
Let $y_j \coloneqq T^{j \ell}(y)$.
By \eqref{e.twice_norm}, we have 
$\| B^{(\ell)}(x_j) \| < 2 \| B^{(\ell)}(y_j) \|$.
On the other hand, limit~\eqref{e.SUSAET} gives $\| B^{(\ell)}(y_j) \| < e^{(\lambda_0 + \epsilon_k)\ell}$.
It follows that $\| P^{(\ell)}(x_j) \| < e^{(\lambda_0 + \epsilon_k)\ell}$.
Using this bound together with \eqref{e.zzz} in the factorization \eqref{e.Euclid_strikes_again}, the outcome is $\|P^{(n)}(x) \| < e^{(\lambda_0 + \epsilon_k)n}$.
This concludes the proof of \cref{l.trouxa}, and therefore \cref{l.key_estimate} is now unconditionally proved.
\end{proof}

\subsection{Invariant bundle}

The goal of this subsection is showing that the restriction of the cocycle $(T,P)$ to the noncompact set $X \setminus Y$ admits an invariant continuous field of directions (\cref{l.bundle}). The proof is independent of \cref{l.key_estimate}, and its key ingredients are positivity of $P$ and the growth bound \eqref{e.phi_explosion}. 

We will make use of Hilbert's projective metric, which we now recall. Proof of the facts listed below can be found in \cite[{\S}2]{Birkhoff_filho}.
Consider the open positive cone $\R_\smallplus^2 = \R_\smallplus \times \R_\smallplus$, i.e., the interior of the first quadrant in the Cartesian plane. 
For each $v = (v_1,v_2) \in \R_\smallplus^2$, the ray $\R_\smallplus v$ is denoted by $\vec{v}$ or $[v_1 : v_2]$.
Let $\mathbb{P}^1_\smallplus$ denote the set of such rays. 
We metrize this set with the \emph{Hilbert metric}
\begin{equation}
\mathsf{d_H} \big([v_1 : v_2],[w_1:w_2]\big)\coloneqq 
\left| \log \frac{v_1 w_2}{v_2 w_1}\right| \, ,
\end{equation}
or equivalently,
\begin{equation}
\mathsf{d_H} \big( [e^t:1], [e^s:1] \big) = |t-s| \, .
\end{equation}

Let $M$ be a nonnegative $2 \times 2$ matrix such that $\mathrm{Ker}(M) \cap \R_\smallplus^2 = \emptyset$. The \emph{projectivization} of $M$ is the map 
$\vec{M} \colon \mathbb{P}^1_\smallplus \to \mathbb{P}^1_\smallplus$ defined by $\vec{M}(\vec{v}) \coloneqq \overrightarrow{Mv}$.
This map is Lipschitz with respect to the Hilbert metric; 
in fact, for all $\vec{v},\vec{w} \in \mathbb{P}^1_\smallplus$ we have
\begin{equation}\label{e.Garrett}
\mathsf{d_H} \big( \vec{M}(\vec{v}), \vec{M}(\vec{w}) \big) \le 
\left(\tanh \frac{\mathfrak{D}(M)}{4} \right) \, \mathsf{d_H} (\vec{v},\vec{w}) \, ,
\end{equation}
where $\mathfrak{D}(M) \in [0,\infty]$ 
denotes the diameter of the image $\vec{M}(\mathbb{P}^1_\smallplus)$ with respect to the Hilbert metric. 
The Lipschitz constant $\tanh \frac{\mathfrak{D}(M)}{4}$ is at most $1$, and is strictly less than $1$ if the matrix has strictly positive entries. Actually,
\begin{equation}\label{e.Diam_formula}
M = \begin{pmatrix} a & b \\ c & d\end{pmatrix} \quad \Rightarrow \quad
\mathfrak{D}(M) = \left| \log \frac{ad}{bc}\right| \, ,
\end{equation}
We also note that, if $M_1$ and $M_2$ are nonnegative matrices, then, as a direct consequence of bound~\eqref{e.Garrett},
\begin{equation}\label{e.funny}
\mathfrak{D}(M_2 M_1) \le \left(\tanh \frac{\mathfrak{D}(M_2)}{4} \right) \mathfrak{D}(M_1) \, .
\end{equation}

Next, we establish the following ``shrinking property'' of the cocycle $(T,P)$:

\begin{lemma}\label{l.diam_to_zero}
For every $x \in X \setminus Y$,
\begin{equation}\label{e.diam_to_zero}
\lim_{n \to +\infty} \mathfrak{D}(P^{(n)}(T^{-n} x)) = 0 \, .
\end{equation}
\end{lemma}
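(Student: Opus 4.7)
The plan is to iterate the contraction inequality \eqref{e.funny} along the cocycle factorization
$$
P^{(n)}(T^{-n}x)=P(T^{-1}x)\,P(T^{-2}x)\cdots P(T^{-n}x),
$$
producing, with the shorthand $D_i\coloneqq\mathfrak{D}(P(T^{-i}x))$, the master bound
$$
\mathfrak{D}\bigl(P^{(n)}(T^{-n}x)\bigr)\le D_n\prod_{i=1}^{n-1}\tanh(D_i/4).
$$
Since $Y$ is $T$-invariant and $x\notin Y$, every backward iterate $T^{-i}x$ lies outside $Y$, so $\theta(T^{-i}x)>0$, each matrix $P(T^{-i}x)=B(T^{-i}x)H(T^{-i}x)$ is entrywise strictly positive, and each $D_i$ is finite and positive. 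The rest of the argument quantifies two opposing effects in this master bound: the prefactor $D_n$ grows slowly, and the product of $\tanh$ factors decays to zero.

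The first estimate I would establish is a pointwise bound on $\mathfrak{D}(P(x))$ in terms of $\theta(x)$. Writing $B=\bigl(\begin{smallmatrix}a&b\\c&d\end{smallmatrix}\bigr)$ with nonnegative entries and $|ad-bc|=1$, the off-diagonal entries $\beta,\gamma$ of $P=BH$ satisfy
$$
\beta\gamma=(ac+bd)\sinh\theta\cosh\theta+ad\sinh^2\theta+bc\cosh^2\theta\ge(a+b)(c+d)\sinh^2\theta\ge\sinh^2\theta,
$$
where the last inequality uses $(a+b)(c+d)\ge ad+bc\ge|ad-bc|=1$. Combining with $\alpha\delta=\beta\gamma\pm 1$ and formula \eqref{e.Diam_formula}, a short case split on the sign of $\det B$ yields
$$
\mathfrak{D}(P(x))\le 2\log\coth\theta(x)\le 2|\log\theta(x)|+C_0
$$
uniformly in $x$. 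Using \eqref{e.phi_explosion} to replace $\theta(x)$ by $\mathsf{d}(x,Y)$ then converts this into $\mathfrak{D}(P(x))\le 2\log|\log\mathsf{d}(x,Y)|+C$ for $x$ close to $Y$.

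The second estimate uses the biLipschitz hypothesis on $T$: fixing a Lipschitz constant $L$ for $T$, the invariance of $Y$ yields $\mathsf{d}(T^{-i}x,Y)\ge L^{-i}\mathsf{d}(x,Y)$, so $|\log\mathsf{d}(T^{-i}x,Y)|\le i\log L+|\log\mathsf{d}(x,Y)|$ and consequently $D_i\le 2\log i+C_x$ for all large $i$, with $C_x$ depending only on $x$. In particular $D_n=O(\log n)$. For the product I would use the elementary inequalities $\tanh(D/4)\le 1-e^{-D/2}$ and $\log(1-y)\le -y$ to get
$$
\log\tanh(D_i/4)\le -e^{-D_i/2}\le -c_x/i\qquad(i\ \text{large})
$$
for some $c_x>0$; summing produces $\prod_{i=1}^{n-1}\tanh(D_i/4)=O(n^{-c_x})$, and combining with $D_n=O(\log n)$ gives
$$
\mathfrak{D}\bigl(P^{(n)}(T^{-n}x)\bigr)=O\bigl(n^{-c_x}\log n\bigr)\longrightarrow 0.
$$

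The delicate point, and the entire reason for the explosion condition \eqref{e.phi_explosion}, is ensuring that $\sum_i e^{-D_i/2}$ diverges. Any substantially faster decay of $\phi$ at the origin would force $D_i$ to grow faster than $\log i$, the series would converge, and the $\tanh$ product would tend to a positive limit. The design of $\phi$ in \cref{l.phi} is calibrated precisely so that, after the exponential contraction $\mathsf{d}(T^{-i}x,Y)\ge L^{-i}\mathsf{d}(x,Y)$ supplied by the biLipschitz hypothesis, $D_i$ remains merely logarithmic in $i$ and $\sum e^{-D_i/2}$ is just barely divergent.
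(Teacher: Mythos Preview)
Your argument is correct and follows the same route as the paper: iterate \eqref{e.funny} to get the master bound, control $\mathfrak{D}(P(T^{-i}x))$ in terms of $\theta(T^{-i}x)$, use biLipschitz plus \eqref{e.phi_explosion} to get $\theta(T^{-i}x)\gtrsim 1/i$, and conclude via divergence of the harmonic series. The only noteworthy difference is that the paper obtains the bound $\mathfrak{D}(P(x))\le 2\log\coth\theta(x)$ in one line, observing that $\mathfrak{D}(BH)\le\mathfrak{D}(H)$ because the projectivization $\vec{B}$ of a nonnegative matrix is nonexpanding for the Hilbert metric; your direct entry computation recovers exactly the same inequality (and in the $\det B=-1$ case you implicitly need the analogous lower bound $\alpha\delta\ge\sinh^2\theta$, which follows by the same calculation).
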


\begin{proof} 
Let $x \in X \setminus Y$. 
Since $P(x)= B(x) H(x)$, 
we have $\mathfrak{D}(P(x)) \le \mathfrak{D}(H(x))$. 
On the other hand, by formula~\eqref{e.Diam_formula} and definition~\eqref{e.H},
$\mathfrak{D}(H(x)) = -2 \log \tanh \theta(x)$. 
Since the function $\theta$ is bounded above, 
we can write $\tanh \theta(x) \ge C^{-1} \theta(x)$  and therefore
\begin{equation}\label{e.1st_bound}
\mathfrak{D}(P(x)) \le C - 2 \log \theta(x) \, .
\end{equation}
The following elementary bounds are valid for all $s > 0$:
\begin{equation}
\log \tanh s = \log \frac{1-e^{-2s}}{1+e^{-2s}} < \log (1-e^{-2s}) < -e^{-2s} \, .
\end{equation}
Note the following consequence:
\begin{equation}\label{e.2nd_bound}
\log \tanh \frac{\mathfrak{D}(P(x))}{4} \le - C \theta(x) \, .
\end{equation}

Since $T$ is biLipschitz, we have, for all $j \ge 1$,
\begin{align}
\mathsf{d}(T^{-j} x , Y) 
&\ge e^{-Cj} \mathsf{d}(x,Y)   \\  
&\ge e^{- \ell_x - Cj}   \, ,
\end{align}
where $\ell_x \coloneqq \max\{0, -\log \mathsf{d}(x,Y)\}$.
Using \eqref{e.theta} and \eqref{e.phi_explosion},
\begin{align}
\theta(T^{-j} x) 
= \phi(\mathsf{d}(T^{-j} x , Y))  
&> \frac{1}{- \log \mathsf{d}(T^{-j} x , Y)} \\ 
&> \frac{1}{\ell_x + Cj} \quad \text{for all $j \ge 1$.} \label{e.harmonic}
\end{align}

Consider the product $P^{(n)}(T^{-n}x) = P(T^{-1} x) \cdots P(T^{-(n-1)} x) P(T^{-n} x)$, where $n>0$.
Applying \eqref{e.funny} recursively, 
\begin{equation}
\mathfrak{D}(P^{(n)}(T^{-n} x)) \le \mathfrak{D}(P(T^{-n} x))  \prod_{j=1}^{n-1} \tanh \frac{\mathfrak{D}(P(T^{-j} x))}{4} 
\end{equation}
Taking log's and using \eqref{e.1st_bound} and \eqref{e.2nd_bound}, we obtain:
\begin{equation}
\log \mathfrak{D}(P^{(n)}(T^{-n} x)) \le \log\big(C - 2 \log \theta(T^{-n} x)\big) -  C \sum_{j=1}^{n-1} \theta(T^{-j} x) \, .
\end{equation}
Now, using \eqref{e.harmonic},
\begin{equation}\label{e.sofrido}
\log \mathfrak{D}(P^{(n)}(T^{-n} x)) \le \log\big(C + 2 \log (\ell_x + Cn) \big) -  C \sum_{j=1}^{n-1} \frac{1}{\ell_x + Cj}  \, .
\end{equation}
We bound the last sum a la Maclaurin--Cauchy: we have
\begin{equation}
\sum_{j=1}^{n-1} \frac{1}{\ell_x + Cj} > \int_1^n \frac{ds}{\ell_x + Cs} > \frac{1}{C} \log(\ell_x + Cn) \, , 
\end{equation}
which diverges to $+\infty$ as $n \to \infty$, and furthermore dominates the other term in the right-hand side of \eqref{e.sofrido}.
This proves that $\mathfrak{D}(P^{(n)}(T^{-n} x))$ converges to $0$.
\end{proof}

\begin{lemma}\label{l.bundle}
There exist continuous functions $u \colon X \setminus Y \to \R^2_\smallplus$ 
and $f \colon X \setminus Y \to \R$ such that 
for all  $x \in X \setminus Y$, we have $\|u(x)\|=1$ and 
\begin{equation}\label{e.u_and_f}
P(x)u(x) = e^{\lambda_0 + f(x)} u(Tx) \, .
\end{equation}
\end{lemma}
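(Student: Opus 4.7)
The plan is to construct $u(x)$ as the image under the backward orbit of the positive cone, and then read off $f(x)$ from the norm. Fix once and for all a reference ray $\vec{v}_0 \coloneqq [1:1] \in \mathbb{P}^1_\smallplus$. For every $x \in X\setminus Y$ and every $n \ge 0$, set
\[
\vec{u}_n(x) \coloneqq \vec{P^{(n)}(T^{-n}x)}(\vec{v}_0) \, .
\]
Note that $\vec{u}_n(x)$ lies in the image $\vec{P^{(n)}(T^{-n}x)}(\mathbb{P}^1_\smallplus)$, whose Hilbert diameter tends to $0$ by \cref{l.diam_to_zero}. Moreover, this image is nested: applying $\vec{P(T^{-(n+1)}x)}$ to $\mathbb{P}^1_\smallplus$ gives a subset of $\mathbb{P}^1_\smallplus$, so $\vec{P^{(n+1)}(T^{-(n+1)}x)}(\mathbb{P}^1_\smallplus) \subseteq \vec{P^{(n)}(T^{-n}x)}(\mathbb{P}^1_\smallplus)$. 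Therefore $\{\vec{u}_n(x)\}$ is a Cauchy sequence in the Hilbert metric whose limit $\vec{u}(x) \in \mathbb{P}^1_\smallplus$ is independent of the choice of $\vec{v}_0$. Define $u(x)$ to be the unique unit vector with positive entries representing $\vec{u}(x)$.

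Next I verify invariance and pass from projective to linear invariance. Since
\[
\vec{P(x)}\bigl(\vec{u}_n(x)\bigr) = \vec{P^{(n+1)}(T^{-n}x)}(\vec{v}_0) = \vec{u}_{n+1}(Tx) \, ,
\]
and the projectivization $\vec{P(x)}$ is Lipschitz for the Hilbert metric, letting $n\to\infty$ gives $\vec{P(x)}(\vec{u}(x)) = \vec{u}(Tx)$. Thus $P(x)u(x)$ is a positive scalar multiple of $u(Tx)$, and we can define $f \colon X \setminus Y \to \R$ by
\[
f(x) \coloneqq \log \|P(x) u(x)\| - \lambda_0 \, ,
\]
so that \eqref{e.u_and_f} holds automatically.

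The remaining point, which I expect to be the most delicate, is continuity. Here the key observation is that the estimate \eqref{e.sofrido} established in the proof of \cref{l.diam_to_zero} depends on $x$ only through the quantity $\ell_x = \max\{0,-\log \mathsf{d}(x,Y)\}$. On any compact set $K \subset X \setminus Y$ we have $\mathsf{d}(x,Y) \ge \delta > 0$ uniformly, so $\ell_x$ is uniformly bounded on $K$, and hence
\[
\sup_{x \in K} \mathfrak{D}\bigl(P^{(n)}(T^{-n}x)\bigr) \xrightarrow[n\to\infty]{} 0 \, .
\]
For each fixed $n$, the map $x \mapsto \vec{u}_n(x)$ is continuous since $P$ and $T$ are. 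Uniform convergence in the Hilbert metric on each compact subset of $X\setminus Y$ therefore forces $x \mapsto \vec{u}(x)$ to be continuous as a map into $(\mathbb{P}^1_\smallplus, \mathsf{d_H})$. Since the parametrization of $\mathbb{P}^1_\smallplus$ by unit vectors in the open positive quadrant of $S^1$ is a homeomorphism, $u \colon X \setminus Y \to \R^2_\smallplus$ is continuous, and then so is $f(x) = \log \|P(x) u(x)\| - \lambda_0$. This completes the construction.
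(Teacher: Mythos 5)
Your proposal is correct and follows essentially the same route as the paper: both construct $\vec{u}(x)$ as the unique point in the nested, Hilbert-shrinking images of the positive cone under backward iterates (via \cref{l.diam_to_zero}), obtain invariance by passing to the limit, and read off $f$ from the norm. The only difference is cosmetic, in the continuity step: you extract from \eqref{e.sofrido} that the shrinking is uniform on compact subsets of $X \setminus Y$ (since the bound depends on $x$ only through $\ell_x$) and invoke uniform convergence on compacta, whereas the paper fixes a single $n$ and uses continuous dependence of $\mathcal{K}^n_x$ on $x$; both arguments are valid.
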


\begin{proof}
For each $x \in X \setminus Y$ and $n>0$, let $\mathcal{K}^n_x$ be the closure of 
$\vec{P}^{(n)}(T^{-n} x) (\mathbb{P}^1_\smallplus)$. 
Note that these sets are nested, that is, $\mathcal{K}^{n+1}_x \subseteq \mathcal{K}^n_x$. 
By \cref{l.diam_to_zero}, their Hilbert diameters converge to zero. It follows that $\bigcap_{n>0} \mathcal{K}^n_x$ contains  a single point $\vec{u}(x)$ in $\mathbb{P}^1_\smallplus$, which can be expressed uniquely as $\vec{u}(x) = \overrightarrow{u(x)}$ where $u(x)$ is a positive unit vector. 
Since $\vec{P}(x) \mathcal{K}^n_x = \mathcal{K}^{n+1}_{Tx}$, it follows that $\vec{P}(x) \vec{u}(x) = \vec{u}(Tx)$. 
In particular, equation \eqref{e.u_and_f} uniquely determines the function~$f$.

We are left to check continuity.
Given $x \in X \setminus Y$ and $\xi>0$,
fix $n$ such that the diameter of $\mathcal{K}^n_x$ is less than $\xi$.
Let $V$ denote the $\xi$-neighborhood of $\mathcal{K}^n_x$ with respect to the Hilbert metric. 
Since the matrix  $P^{(n)}(T^{-n} x)$ has strictly positive entries that depend continuously on $x$, if $\tilde{x}$ is sufficiently close to $x$, then $\mathcal{K}^n_{\tilde x} \subseteq V$. 
Therefore $\vec{u}(\tilde x) \in V$, and in particular, $\mathsf{d_H}(\vec{u}(\tilde x), \vec{u}(x)) < 2\xi$. This proves that the map $x \mapsto \vec{u}(x) \in \mathbb{P}^1_\smallplus$ is continuous with respect to the Hilbert metric. Equivalently, $x \mapsto u(x) \in \R_\smallplus^2$ is continuous with respect to the Euclidean metric. Lastly, $f(x) = -\lambda_0 + \log \|P(x)u(x)\|$ is also a continuous function of $x$.
\end{proof}

\subsection{The cohomological equation}

The goal of this subsection is \cref{l.approx_cob}, which will show that the function $f$ from \cref{l.bundle} is an approximate coboundary near $Y$.

\begin{lemma}\label{l.tau}
There exists a continuous function $\tau \colon X \setminus Y \to \R_\smallplus$ such that 
\begin{gather}
\label{e.tau_anchors}
\mathsf{d}(x,Y) \le \delta_k \quad \Rightarrow \quad \tau(x) \ge m_k \, , \\ 
\label{e.tau_is_slow}
\lim_{x \to Y} |\tau(Tx) - \tau(x)| = 0  \, .
\end{gather}
\end{lemma}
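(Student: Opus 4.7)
The plan is to define $\tau$ as an explicit slowly varying function of the distance to $Y$. Writing $\rho_k \coloneqq -\log \delta_k$, so that $\rho_k \nearrow \infty$, I would take $g \colon [0,\infty) \to [m_1,\infty)$ to be the continuous piecewise linear function that equals $m_1$ on $[0,\rho_1]$ and is affine between the anchor values $g(\rho_k) = m_k$ on each interval $[\rho_k,\rho_{k+1}]$, and then set
\[
\tau(x) \coloneqq g\big({-}\log \mathsf{d}(x,Y)\big).
\]
Continuity of $\tau$ on $X \setminus Y$ is immediate from continuity of $g$ and of $x \mapsto \mathsf{d}(x,Y)$, and positivity follows from $g \ge m_1 > 0$. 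The anchor property \eqref{e.tau_anchors} is also immediate: if $\mathsf{d}(x,Y) \le \delta_k$ then $-\log \mathsf{d}(x,Y) \ge \rho_k$, hence $\tau(x) \ge g(\rho_k) = m_k$ by monotonicity of $g$.

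The crux is that the aggressive shrinking condition \eqref{e.page26}, $\delta_{k+1} < \delta_k e^{-k m_{k+1}}$, translates into $\rho_{k+1} - \rho_k > k m_{k+1}$. Consequently the slope of $g$ on $[\rho_k,\rho_{k+1}]$ is at most
\[
\frac{m_{k+1}-m_k}{\rho_{k+1}-\rho_k} < \frac{m_{k+1}}{k m_{k+1}} = \frac{1}{k},
\]
so $g$ diverges to infinity with vanishing slope. This is exactly the regularity needed to absorb the bounded perturbation of $-\log \mathsf{d}(\cdot,Y)$ induced by one iterate of $T$.

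To finish, I would invoke the biLipschitz hypothesis: fixing a biLipschitz constant $L \ge 1$ for $T$ and exploiting $T$-invariance of $Y$, one gets $\mathsf{d}(Tx,Y) \in [L^{-1}\mathsf{d}(x,Y),\, L\,\mathsf{d}(x,Y)]$ for every $x \in X \setminus Y$, so $|{-}\log \mathsf{d}(Tx,Y) - ({-}\log \mathsf{d}(x,Y))| \le \log L$. For $x$ sufficiently close to $Y$, both $-\log \mathsf{d}(x,Y)$ and $-\log \mathsf{d}(Tx,Y)$ lie in $[\rho_k,\infty)$ with $k$ arbitrarily large, and the slope bound above (possibly invoked on two adjacent intervals if the segment crosses an anchor $\rho_j$) gives $|\tau(Tx) - \tau(x)| \le (\log L)/k \to 0$, verifying \eqref{e.tau_is_slow}. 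There is essentially no obstacle here: the bound \eqref{e.page26} was clearly arranged in anticipation of precisely this lemma, and the only minor bookkeeping is the straddling case, handled by the triangle inequality at the intermediate anchor.
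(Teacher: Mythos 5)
Your proposal is correct and follows essentially the same route as the paper: define $\tau$ as a piecewise affine function of $-\log\mathsf{d}(x,Y)$ with anchors at $-\log\delta_k \mapsto m_k$, verify \eqref{e.tau_anchors} by monotonicity, and derive \eqref{e.tau_is_slow} from the biLipschitz bound $|\log\mathsf{d}(Tx,Y)-\log\mathsf{d}(x,Y)|\le C$ together with the slope bound $\frac{m_{k+1}-m_k}{-\log\delta_{k+1}+\log\delta_k}<\frac{1}{k}$ supplied by \eqref{e.page26}. The only cosmetic differences are your flat initial piece versus the paper's anchor at $\log 10$, and your explicit treatment of the anchor-straddling case, which the paper subsumes by bounding the Lipschitz constant of the interpolating function on $[n,\infty)$.
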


Observe that $\tau(x)$ tends to infinity as $x$ approaches $Y$, but this convergence is slow, because $(\delta_k)$ tends to zero much faster than $(m_k)$ grows.

\begin{proof}
Let $\psi \colon [\log 10, \infty) \to [1,\infty)$ be the piecewise affine homeomorphism with anchor points $\psi(\log 10) = 1$ and $\psi(-\log \delta_k) = m_k$. 
Define a continuous positive function on $X\setminus Y$ by
$\tau(x) \coloneqq \psi(-\log\mathsf{d}(x,Y))$.
Property \eqref{e.tau_anchors} is clearly satisfied. 

Since our dynamics $T$ is a biLipschitz homeomorphism, 
\begin{equation}
	\left| - \log \mathsf{d}(Tx,Y) + \log \mathsf{d}(x,Y) \right| \le C \, .
\end{equation}
So property \eqref{e.tau_is_slow} will follow once we check that the Lipschitz constant of the function $\psi$ restricted to the interval $[n,\infty)$ tends to zero as $n \to \infty$.
The derivative of the piecewise linear function $\psi$ between two consecutive anchor points is:
\begin{align}
\frac{m_{k+1}-m_k}{-\log\delta_{k+1} + \log\delta_k} 
&< \frac{m_{k+1}-m_k}{km_{k+1}} \quad \text{(by \eqref{e.page26})} \\
&< \frac{1}{k} \, ,
\end{align}
so the function $\psi$ behaves appropriately.
\end{proof}

Recall that the function $f$ was introduced in \cref{l.bundle}. 
Let $\|f\|_\infty \coloneqq \sup_{X\setminus Y} |f|$; this quantity is finite (despite the domain of $f$ being noncompact), since it is bounded by $-\lambda_0 + \log \|P\|_\infty$.

For each $t \ge 0$ and $x \in X\setminus Y$, we define the \emph{interpolated Birkhoff sum} $S_t(x)$ as follows: $S_0(x) \coloneqq 0$ and, if $t>0$,
\begin{equation}\label{e.interpolated}
S_t(x) \coloneqq f (x) + f(Tx) + \cdots + f(T^{\lfloor t \rfloor-1} x) + (t-\lfloor t \rfloor) f( T^{\lfloor t \rfloor} x) \, .
\end{equation}
Also, define the auxiliary function 
\begin{equation}\label{e.def_Z}
Z_t(x) \coloneqq \sum_{i=0}^\infty \left(1-\frac{i+1}{t}\right)^+ f(T^i x) \, .
\end{equation}

\begin{lemma}\label{l.Z_properties}
The function $(t,x) \mapsto Z_t(x)$ is continuous on $\R_\smallplus \times (X \setminus Y)$.
It solves the following cohomological equation:
\begin{equation}\label{e.trick}
\frac{S_t(x)}{t} = f(x) + Z_t(Tx) - Z_t(x) \, ,
\end{equation}
where $t>0$.
Furthermore, 
for all $t,s>0$, 
\begin{equation}\label{e.Z_Lip}
|Z_s(x) - Z_t(x)| \le \|f\|_\infty \, |s-t| \, .
\end{equation}
\end{lemma}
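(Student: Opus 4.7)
The plan is to exploit the fact that, writing $\alpha_u(j) \coloneqq (1 - j/u)^+$, the formula \eqref{e.def_Z} can be rewritten as $Z_t(x) = \sum_{j \ge 1} \alpha_t(j) f(T^{j-1} x)$, which for each $t > 0$ is in fact a \emph{finite} sum (only $j < t$ contribute). I would then treat the three assertions in turn.

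Joint continuity of $(t, x) \mapsto Z_t(x)$ on $\R_\smallplus \times (X \setminus Y)$ is by inspection: each coefficient $t \mapsto \alpha_t(j)$ is continuous on $(0, \infty)$ and vanishes as $t \searrow j$, so in a small neighborhood of any fixed $(t_0, x_0)$ only boundedly many nonzero summands appear, each continuous in $(t, x)$.

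For the cohomological equation I would perform an index shift in $Z_t(Tx) = \sum_{k \ge 1} \alpha_t(k) f(T^k x)$ and subtract term by term, obtaining
\begin{equation*}
Z_t(Tx) - Z_t(x) = -\alpha_t(1) f(x) + \sum_{k \ge 1} \big[\alpha_t(k) - \alpha_t(k+1)\big] f(T^k x).
\end{equation*}
A short case analysis gives $\alpha_t(k) - \alpha_t(k+1) = 1/t$ when $k + 1 \le t$, equals $(t - k)/t$ when $k < t < k + 1$, and is $0$ when $k \ge t$; moreover $1 - \alpha_t(1) = 1/t$ for $t \ge 1$ while $1 - \alpha_t(1) = 1$ for $0 < t < 1$. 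Adding $f(x)$ on both sides and comparing with \eqref{e.interpolated}, the identity \eqref{e.trick} then falls out for each of the sub-cases $t \ge 1$ and $0 < t < 1$; the latter is the degenerate case in which $S_t(x) = t f(x)$, matched precisely by the identity $1 - \alpha_t(1) = 1$ there.

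The real content is the Lipschitz bound \eqref{e.Z_Lip}, which I expect to be the only substantive obstacle. The naive estimate $|\alpha_s(j) - \alpha_t(j)| \le j |s-t|/(st)$ is too weak, as it gives a divergent series in $j$; instead I would use that $u \mapsto \alpha_u(j)$ is absolutely continuous with derivative $\partial_u \alpha_u(j) = j/u^2$ for $u > j$ and $0$ for $u < j$. Assuming without loss $s > t > 0$, Fubini applied to this nonnegative integrand yields
\begin{equation*}
\sum_{j \ge 1} \big[\alpha_s(j) - \alpha_t(j)\big] = \int_t^s \frac{1}{u^2} \sum_{1 \le j < u} j \, du.
\end{equation*}
For $u < 1$ the inner sum is empty, while for $u \ge 1$ it is at most $u(u+1)/2$, so the integrand is bounded by $(u+1)/(2u) \le 1$. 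Consequently the right-hand side is at most $s - t$, and multiplying by $\|f\|_\infty$ gives the desired estimate.
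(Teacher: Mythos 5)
Your proposal is correct and follows essentially the same route as the paper's proof: locally finite sums for continuity, a coefficient-by-coefficient comparison of $S_t(x)/t$ with $f(x)+Z_t(Tx)-Z_t(x)$ for the cohomological identity, and for the Lipschitz bound the monotonicity of the coefficients in $t$ combined with the bound $\lfloor u\rfloor(\lfloor u\rfloor+1)/(2u^2)\le 1$ (which the paper phrases as the derivative of the auxiliary function $h(t)=\sum_i(1-\tfrac{i+1}{t})^+$ lying in $[0,1]$, and you phrase equivalently as an integral via Fubini). No gaps.
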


We remark that \eqref{e.trick} is basically known, at least for integer $t$ (see \cite{MOP}), while property \eqref{e.Z_Lip} seems to be new.

\begin{proof}
To check continuity, note the terms in the sum \eqref{e.def_Z} vanish for $i \ge t-1$, and are continuous with respect to $x$ and $t$. 
	
Note that \eqref{e.interpolated} can be rewritten as:
\begin{equation}
S_t(x) = \sum_{i=0}^\infty  \left[ (t-i)^+ - (t-i-1)^+ \right] f(T^i x) \, .
\end{equation}
So:
\begin{equation}
\frac{S_t(x)}{t} 
= \underbrace{\sum_{i=0}^\infty  \left(1-\frac{i}{t}\right)^+ f(T^i x)}_{f(x) - Z_t(Tx)}
  -  \underbrace{\sum_{i=0}^\infty  \left(1-\frac{i+1}{t}\right)^+  f(T^i x)}_{Z_t(x)}  \, ,
\end{equation}
which is equation \eqref{e.trick}.

Since the coefficient of $f(T^i x)$ in the series \eqref{e.interpolated}
is increasing with respect to $t$, we have $|Z_s(x) - Z_t(x)| \le \|f\|_\infty |h(s)-h(t)|$, where 
\begin{equation}
h(t) \coloneqq \sum_{i=0}^\infty  \left(1-\frac{i+1}{t}\right)^+ \, .
\end{equation}
This is a continuous function. If $t>0$ is not an integer, then $h$ is differentiable at $t$ with
\begin{equation}
h'(t) = \sum_{i=0}^{\lfloor t \rfloor-1}\frac{i+1}{t^2} = \frac{\lfloor t \rfloor \, (\lfloor t \rfloor+1)}{2t^2} \in [0,1] \, .
\end{equation}
So $h$ has a Lipschitz constant $1$, and inequality \eqref{e.Z_Lip} follows.
\end{proof}

\begin{lemma}
We have
\begin{equation}\label{e.repackaged_key}
\lim_{x \to Y} \frac{S_{\tau(x)}(x)}{\tau(x)} = 0 \, .
\end{equation}
\end{lemma}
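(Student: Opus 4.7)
The plan is to express the interpolated Birkhoff sum as a finite-time expansion rate of the cocycle $(T,P)$ along the invariant direction $u(\cdot)$, then invoke \cref{l.key_estimate}. Iterating the invariance relation \eqref{e.u_and_f} gives, for every positive integer $n$,
\begin{equation}
P^{(n)}(x) u(x) = \exp\bigl(n\lambda_0 + S_n(x)\bigr) \, u(T^n x) \, ,
\end{equation}
and since $u(T^n x)$ is a unit vector,
\begin{equation}
\frac{S_n(x)}{n} = \frac{1}{n}\log \|P^{(n)}(x) u(x)\| - \lambda_0 \, .
\end{equation}
The vector $u(x) \in \R_\smallplus^2$ is a nonnegative unit vector, so \cref{l.key_estimate} applies: for every $x$ with $\delta_{k+1} \le \mathsf{d}(x,Y) \le \delta_k$ and every integer $n$ with $m_k \le n \le m_{k+1}$, one has $|S_n(x)/n| < \epsilon_k$.

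Next I would transport this estimate from integer $n$ to the real time $\tau(x)$. By the construction of $\tau$ in \cref{l.tau} (piecewise affine in $-\log \mathsf{d}(\cdot,Y)$ with anchor values $\psi(-\log \delta_k) = m_k$), the annular hypothesis $\delta_{k+1} \le \mathsf{d}(x,Y) \le \delta_k$ forces $\tau(x) \in [m_k, m_{k+1}]$; in particular $n \coloneqq \lfloor \tau(x) \rfloor$ lies in the admissible range. The definition \eqref{e.interpolated} of the interpolated Birkhoff sum gives $|S_{\tau(x)}(x) - S_n(x)| \le \|f\|_\infty$, hence
\begin{equation}
\left|\frac{S_{\tau(x)}(x)}{\tau(x)}\right| \le \frac{|S_n(x)|}{n} + \frac{\|f\|_\infty}{\tau(x)} \le \epsilon_k + \frac{\|f\|_\infty}{m_k} \, .
\end{equation}
As $x \to Y$, property \eqref{e.tau_anchors} forces $k \to \infty$, so both terms on the right vanish, which is \eqref{e.repackaged_key}.

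I do not expect any serious obstacle here: the heart of the matter has already been handled by \cref{l.key_estimate}, and what remains is simply to recognize that $S_n(x)$ is precisely the deviation of $\log \|P^{(n)}(x) u(x)\|$ from $n\lambda_0$. The only bookkeeping concerns aligning the spatial annuli $\{\delta_{k+1} \le \mathsf{d}(\cdot,Y) \le \delta_k\}$ with the allowed time windows $[m_k, m_{k+1}]$ -- which is exactly the reason the function $\tau$ was designed with those anchor points -- and absorbing a bounded additive error when passing from integer time $n$ to the real-valued time $\tau(x)$.
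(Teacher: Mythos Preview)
Your proof is correct and follows essentially the same approach as the paper: both iterate the invariance relation \eqref{e.u_and_f} to identify $S_n(x)$ with $\log\|P^{(n)}(x)u(x)\| - n\lambda_0$, invoke \cref{l.key_estimate} with the nonnegative unit vector $u(x)$, use the anchor-point construction of $\tau$ to place $\tau(x)$ in the window $[m_k,m_{k+1}]$, and absorb an $O(1/n)$ error when passing from $n=\lfloor\tau(x)\rfloor$ to $\tau(x)$. The only cosmetic difference is that the paper expands $S_{\tau(x)}(x)/\tau(x)$ directly in terms of norms, whereas you first bound $S_n(x)/n$ and then correct; your justification of the upper bound $\tau(x)\le m_{k+1}$ via the explicit piecewise-affine construction is in fact slightly more careful than the paper's citation of \eqref{e.tau_anchors} alone.
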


\begin{proof}
Given $x \in X \setminus Y$, let $k$ be such that $\delta_{k+1} \le d(x,Y) \le \delta_k$. 
Then \eqref{e.tau_anchors} implies that $m_k \le \tau(x) \le m_{k+1}$.
Let $n \coloneqq \lfloor \tau(x) \rfloor$, so $m_k \le n \le m_{k+1}$, that is, conditions \eqref{e.two_ranges} are met.
Now write $\tau(x) = n + r$, where $0 \le r < 1$.
By relations \eqref{e.interpolated} and \eqref{e.u_and_f},
\begin{align}
\frac{S_{\tau(x)}(x)}{\tau(x)} 
&= -\lambda_0 + \frac{1}{n+r} \log \|P^{(n)}(x)u(x)\| +  \frac{r}{n+r} \log \|P(T^n x)u(T^n x)\| \\ 
&= -\lambda_0 + \frac{1}{n} \log \|P^{(n)}(x)u(x)\| + O(\tfrac{1}{n}) \, .
\end{align}
If $x$ is close to $Y$, then $k$ and $n$ are large, and \cref{l.key_estimate} ensures that this quantity is small.
\end{proof}

\begin{lemma}[Approximate coboundary] \label{l.approx_cob}
There exist two continuous functions 
{$g \colon X \setminus Y \to \R $} and {$r \colon X \to \R$}
such that
\begin{equation}\label{e.approx_coboundary}
	f(x) = g(Tx) - g(x) + r(x) \quad \text{for all } x \in X \setminus Y
\end{equation}
and $r$ vanishes on $Y$. 
\end{lemma}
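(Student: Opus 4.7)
The plan is to cook $g$ directly out of the auxiliary function $Z_t(x)$ by evaluating it at the adapted time scale $\tau(x)$, and then to define $r$ as the resulting coboundary defect. Concretely, I set
\begin{equation*}
g(x) \coloneqq Z_{\tau(x)}(x) \quad \text{for } x \in X \setminus Y,
\end{equation*}
which is continuous on $X \setminus Y$ because $(t,x) \mapsto Z_t(x)$ is continuous on $\R_\smallplus \times (X \setminus Y)$ by \cref{l.Z_properties} and $\tau$ is continuous on $X \setminus Y$. Then I define
\begin{equation*}
r(x) \coloneqq f(x) - g(Tx) + g(x) \quad \text{for } x \in X \setminus Y, \qquad r(y) \coloneqq 0 \text{ for } y \in Y.
\end{equation*}
By construction the identity \eqref{e.approx_coboundary} then holds on $X \setminus Y$, so the only remaining task is to verify that $r$ extends continuously to $X$, i.e.\ that $r(x) \to 0$ as $x \to Y$.

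To establish this, I apply the cohomological equation \eqref{e.trick} of \cref{l.Z_properties} with $t \coloneqq \tau(x)$, obtaining
\begin{equation*}
f(x) + Z_{\tau(x)}(Tx) - Z_{\tau(x)}(x) = \frac{S_{\tau(x)}(x)}{\tau(x)}.
\end{equation*}
Substituting into the definition of $r$ and using $g(x) = Z_{\tau(x)}(x)$ but $g(Tx) = Z_{\tau(Tx)}(Tx)$, I get the decomposition
\begin{equation*}
r(x) = \frac{S_{\tau(x)}(x)}{\tau(x)} + \bigl[Z_{\tau(x)}(Tx) - Z_{\tau(Tx)}(Tx)\bigr].
\end{equation*}
The first term tends to zero as $x \to Y$ by \eqref{e.repackaged_key}. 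The bracketed term is bounded in absolute value by $\|f\|_\infty \, |\tau(x) - \tau(Tx)|$ thanks to the Lipschitz estimate \eqref{e.Z_Lip}, and therefore also tends to $0$ as $x \to Y$ by property \eqref{e.tau_is_slow} of $\tau$. Hence $r(x) \to 0$ as $x \to Y$, so the extension of $r$ by zero on $Y$ is continuous on all of $X$, completing the proof.

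There is no real obstacle here: the whole argument rests on having (i) a genuine cohomological identity at every finite time $t$, (ii) a time function $\tau(x)$ adapted to the space scales $(\delta_k)$ so that $\tau(x) \to \infty$ slowly enough in the sense of \eqref{e.tau_is_slow}, and (iii) Lipschitz dependence of $Z_t$ in the time parameter, so that the mismatch between $\tau(x)$ and $\tau(Tx)$ contributes only a vanishing error. These were precisely the three properties built into \cref{l.tau,l.Z_properties} and the estimate \eqref{e.repackaged_key}; the proof is essentially a bookkeeping exercise assembling them. Note that $g$ itself need not extend continuously to $Y$—indeed, one expects it to blow up—but only the coboundary defect $r$ is required to extend, and the defect is tamed precisely by the approximate Birkhoff bound at the time scale $\tau(x)$.
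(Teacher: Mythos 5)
Your strategy is the same as the paper's (evaluate $Z$ at the adapted time $\tau(x)$, declare $r$ to be the coboundary defect, and control it with \eqref{e.trick}, \eqref{e.Z_Lip}, \eqref{e.repackaged_key}, and \eqref{e.tau_is_slow}), but there is a sign error in the definition of $g$ that breaks the key algebraic step. With your choice $g(x) \coloneqq Z_{\tau(x)}(x)$, the defect is $r(x) = f(x) - Z_{\tau(Tx)}(Tx) + Z_{\tau(x)}(x)$, whereas \eqref{e.trick} at $t=\tau(x)$ gives $f(x) = \tfrac{S_{\tau(x)}(x)}{\tau(x)} - Z_{\tau(x)}(Tx) + Z_{\tau(x)}(x)$. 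Substituting yields
\begin{equation*}
r(x) = \frac{S_{\tau(x)}(x)}{\tau(x)} - Z_{\tau(x)}(Tx) - Z_{\tau(Tx)}(Tx) + 2\,Z_{\tau(x)}(x) \, ,
\end{equation*}
which is not the decomposition you wrote down. Worse, the defect genuinely fails to vanish: replacing $Z_{\tau(Tx)}(Tx)$ by $Z_{\tau(x)}(Tx)$ via \eqref{e.Z_Lip} and \eqref{e.tau_is_slow}, and using \eqref{e.trick} once more, one finds $r(x) = 2f(x) - \tfrac{S_{\tau(x)}(x)}{\tau(x)} + o(1) = 2f(x) + o(1)$ as $x \to Y$, and $f$ has no reason to tend to zero near $Y$. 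So with the plus sign, $r$ does not extend continuously by zero.

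The decomposition you actually wrote, $r(x) = \tfrac{S_{\tau(x)}(x)}{\tau(x)} + \bigl[Z_{\tau(x)}(Tx) - Z_{\tau(Tx)}(Tx)\bigr]$, is (up to the sign of the bracketed error term, which is irrelevant for the estimate) the correct one for the choice $g(x) \coloneqq -Z_{\tau(x)}(x)$, which is what the paper takes. Indeed, with that sign, $r(x) = f(x) + Z_{\tau(Tx)}(Tx) - Z_{\tau(x)}(x)$, and adding and subtracting $Z_{\tau(x)}(Tx)$ gives exactly $\tfrac{S_{\tau(x)}(x)}{\tau(x)}$ plus a term bounded by $\|f\|_\infty\,|\tau(Tx)-\tau(x)|$. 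After this one-character correction, the remainder of your argument — continuity of $g$ from \cref{l.Z_properties,l.tau}, and the two limits \eqref{e.repackaged_key} and \eqref{e.tau_is_slow} — goes through verbatim and coincides with the paper's proof.
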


\begin{proof}
We let $g(x) \coloneqq -Z_{\tau(x)} (x)$.
This is a continuous function on $X \setminus Y$ (see \cref{l.Z_properties}).
Now define $r$ on $X \setminus Y$ by \eqref{e.approx_coboundary}.
Then we  have
\begin{align}
|r(x)| 
&= \left| f(x) + g(x) - g(Tx)\right|  \\
&= \left| f(x) - Z_{\tau(x)} (x) + Z_{\tau(Tx)} (Tx) \right| \\
&\le \left| f(x) - Z_{\tau(x)} (x) + Z_{\tau(x)} (Tx) \right|
+  \big|Z_{\tau(Tx)} (Tx) - Z_{\tau(x)} (Tx) \big| 
\end{align}
Therefore
\begin{alignat}{2}
|r(x)|
&\le \left| \frac{S_{\tau(x)}(x)}{\tau(x)} \right| + \|f\|_\infty \left|\tau(Tx) - \tau(x)\right| &\quad &\text{(by \eqref{e.trick} and \eqref{e.Z_Lip})} \\
&\to 0 \text{ as $x \to Y$} &\quad&\text{(by \eqref{e.repackaged_key} and \eqref{e.tau_is_slow}).}
\end{alignat}
This allows us to define $r$ as $0$ on $Y$ and obtain a continuous function on $X$.
\end{proof}

\subsection{The final perturbation}

For each $x \in X \setminus Y$, let $S(x)$
be the matrix uniquely defined by the following conditions:
\begin{equation}
S(x) u(x) =  e^{r(x)} u(x)  \quad \text{and} \quad
S(x) v =  e^{-r(x)} v  \text{ if }  v \perp u(x) \, .
\end{equation}
Since $u(x)$ and $r(x)$ depend continuously on $x$, so does $S(x)$.
Furthermore, $S(x)$ is a symmetric matrix with eigenvalues $e^{\pm r(x)}$.
It follows that 
\begin{equation}
	\| S(x) - \mathrm{Id} \| = e^{|r(x)|} - 1 \, .
\end{equation}
Since the continuous function $r$ vanishes on $Y$, defining $S(x)$ as the identity matrix for all $x \in Y$, we obtain a continuous map $S \colon X \to \mathrm{SL}(2,\R)$.
Finally, define $A \colon X \to \mathrm{SL}^{\smallpm}(2,\R)$ by
\begin{equation}\label{e.final_perturb}
A(x) = P(x)S(x) \, .
\end{equation}
Note that
\begin{equation}\label{e.u_and_g}
A(x)u(x) = e^{\lambda_0 + g(Tx) - g(x)} u(Tx) \, , \quad \text{for all  $x \in X \setminus Y$.}
\end{equation}
The cocycle $(T,A)$ is not uniformly hyperbolic, since its restriction to the invariant set $Y$ is still the Walters cocycle. 
Let us show that the cocycle is monochromatic:

\begin{lemma}
$\lambda(T,A,\mu) = \lambda_0$ for every ergodic measure $\mu$.
\end{lemma}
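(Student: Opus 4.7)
The plan is to split into two cases according to the value of $\mu(Y)$. If $\mu(Y) > 0$, then ergodicity and the $T$-invariance of $Y$ force $\mu(Y) = 1$, and unique ergodicity of $T|_Y$ identifies $\mu$ with $\nu$. Since $A$ and $B$ agree on $Y$ by construction, we get $\lambda(T,A,\mu) = \lambda(T|_Y, B|_Y, \nu) = \lambda_0$.

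The substantive case is $\mu(Y) = 0$, in which $\mu$-almost every forward orbit stays in the $T$-invariant set $X \setminus Y$ for all time. Iterating the invariant-line-field equation \eqref{e.u_and_g} gives
\begin{equation*}
\frac{1}{n} \log \|A^{(n)}(x) u(x)\| = \lambda_0 + \frac{g(T^n x) - g(x)}{n}
\end{equation*}
for every $x$ whose forward orbit remains in $X \setminus Y$. The main obstacle is that $g$ is only defined on the noncompact set $X \setminus Y$ and can blow up near $Y$, so one cannot naively appeal to $\mu$-integrability of $g$ to kill the error term as a coboundary. The way around this is Poincar\'e recurrence: since $\mu(Y) = 0$ and $\mu$ is inner regular, I fix a compact set $K \subset X \setminus Y$ with $\mu(K) > 0$, and on $K$ the continuous function $g$ is bounded. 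For $\mu$-a.e.\ $x \in K$, Poincar\'e recurrence supplies a sequence $n_k \to \infty$ with $T^{n_k} x \in K$; then $|g(T^{n_k} x) - g(x)| \le 2 \sup_K |g|$, and dividing by $n_k$ kills the error term along this subsequence, yielding $\tfrac{1}{n_k} \log \|A^{(n_k)}(x) u(x)\| \to \lambda_0$.

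To finish, I invoke Oseledets' theorem: for $\mu$-a.e.\ $x$ the full sequence $\tfrac{1}{n} \log \|A^{(n)}(x) u(x)\|$ converges, and the limit must equal one of the two Lyapunov exponents $\pm \lambda(T,A,\mu)$. Since a subsequential limit equals $\lambda_0 > 0$, the full limit must be $+\lambda(T,A,\mu) = \lambda_0$; this (together with the sum-to-zero constraint coming from $\det A \in \{\pm 1\}$, which in the two-dimensional setting determines both exponents from one) gives $\lambda(T,A,\mu) = \lambda_0$. Letting $K$ range over an exhaustion of $X \setminus Y$ by compact sets makes the conclusion hold on a set of full $\mu$-measure, completing the argument.
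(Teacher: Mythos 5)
Your proof is correct and follows essentially the same route as the paper: the same case split on $\mu(Y)$, the same iteration of \eqref{e.u_and_g} along the invariant line field, Poincar\'e recurrence to kill the coboundary term $\frac{g(T^{n}x)-g(x)}{n}$, and Oseledets' theorem to upgrade the subsequential limit to the full Lyapunov exponent. The only (immaterial) difference is that the paper recurs to the point $x$ itself and uses continuity of $g$ at $x$, whereas you recur to a compact subset $K \subset X \setminus Y$ of positive measure and use boundedness of $g$ on $K$.
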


\begin{proof}
Let $\mu$ be a $T$-invariant ergodic Borel probability measure on $X$.
If $\mu(Y) = 1$, then $\mu$ equals $\nu$, the unique invariant measure for the Veech-like map, so the Lyapunov exponent is $\lambda_0$ by definition. 
So we can assume that $\mu(Y) = 0$.

For every $x \in X \setminus Y$ and $n \ge 0$, iteration of \eqref{e.u_and_g} gives
\begin{equation}
A^{(n)}(x) u(x) = e^{n\lambda_0 + g(T^n x) - g(x)} u(T^n(x)) \, .
\end{equation}
By the Poincar\'e recurrence theorem, for $\mu$-almost every $x$ there exists a sequence $n_i \to +\infty$ such that $T^{n_i}x \to x$.
We can assume that $x \not\in Y$, so
\begin{equation}
\frac{1}{n_i} \log \| A^{(n_i)}(x) u(x) \| = \lambda_0 + \frac{g(T^n x) - g(x)}{n_i} \to \lambda_0 \quad \text{as } i \to \infty \, ,
\end{equation}
since $g$ is continuous on the open set $X \setminus Y$.
On the other hand, by Oseledets theorem,  for $\mu$-almost every $x$ and every non-zero vector $v$ we have 
\begin{equation}
\frac{1}{n} \log \| A^{(n_i)}(x) v \| \to \pm \lambda(T,A,\mu) \quad \text{as } n \to \infty \, .
\end{equation}
It follows that $\lambda(T,A,\mu) = \lambda_0$. 
\end{proof}

This concludes the proof of \cref{t.main_positive}.
Observe that it is not necessarily the case that matrices $A(x)$ are positive (or even nonnegative) for $x \in X \setminus Y$. Nevertheless, the cocycle restricted to $X \setminus Y$ admits a continuous field of lines such along which the log of the rate of expansion is cohomologous to a constant. In particular, the restriction of the cocycle to any invariant compact set $Z \subset X \setminus Y$ is uniformly hyperbolic. 

\section{Construction of a symbolic Veech-like map}\label{s.symbolic_Veech}

In this section we provide a proof of \cref{t.existence_Veech} by showing an explicit Veech-like shift map $T$. We use a variation of a construction explained to us by Scott Schmieding. We also provide an explicit Walters cocycle with base $T$.

\subsection{Definition of the shift space}

Consider the following \emph{alphabet} 
\begin{equation}
\mathsf{A} \coloneqq  \{ \mathord{\uparrow}, \, \mathord{\downarrow} , \,  0 \} \, ,
\end{equation}
whose elements are called \emph{letters}.
Finite (possibly empty) strings of letters are called \emph{words}. 
The set $\mathsf{A}^*$ of all words is a semigroup under the concatenation operation. 
We say that a word $w'$ is a \emph{subword} of a word $w$
if $w = u w' v$ for some (possibly empty) words $u$ and $v$. 
If $w = uv$, then $u$ is called a \emph{prefix} of $w$ and $v$ is called a \emph{suffix} of $w$.
The number of letters of a word $w$ is called its \emph{length}, and is denoted $|w|$.
If $w$ is a word, let $\overline{w}$ denote the \emph{conjugate} word obtained by interchanging letters $\mathord{\uparrow}$ and $\mathord{\downarrow}$.

We fix an integer sequence $(m_k)$ such that 
\begin{equation}\label{e.mk}
m_k \ge 2 \quad \text{and} \quad \sum_{k=1}^\infty \frac{1}{m_k} < \infty \, .
\end{equation}
Then we define recursively a sequence of words $e_1, e_2, \dots$ as follows:
\begin{align}
\label{e.first_word}
e_{1}	&\coloneqq \mathord{\uparrow} \mathord{\downarrow}  \, ,  
\\ 
\label{e.next_word}
e_{k+1} &\coloneqq 
e_k^{m_k} \, 0 \, e_k \, 
\overline{e_k}^{m_k} \,  0 \, \overline{e_k} \, .
\end{align}

Let $Y$ be the set of doubly-infinite sequences $\omega \in \mathsf{A}^\Z$ such that for every $n > 0$, the word $\omega_{\ldbrack -n,n\rdbrack}$ is a subword of some $e_k$.
Equipping $\mathsf{A}^\Z$ with the product topology, the subset $Y$ is closed and shift-invariant. 
The fact that $Y$ is nonempty follows from the observation that for every $k$, the word $e_k$ can be extended both to the left and to the right to form the word $e_{k+1}$.
Let $T \colon Y \to Y$ be the restriction of the shift transformation. 
We will show: 

\begin{theorem}\label{t.our_Veech}
The map $T$ defined above is Veech-like, and has zero topological entropy.
\end{theorem}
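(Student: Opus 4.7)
The theorem splits into five claims: $T$ is minimal; $T$ is uniquely ergodic; $T^2$ is minimal; $T^2$ is \emph{not} uniquely ergodic; and $h_{\mathrm{top}}(T) = 0$. All rest on combinatorial bookkeeping of the hierarchy $(e_k)$ together with the elementary observation that $|e_k|$ is always even (by induction from $|e_1|=2$ and $|e_{k+1}| = 2(m_k+1)|e_k|+2$). Thus parity of position in $\Z$ is preserved by $T^2$ and flipped by $T$, and the strategy is to exploit a systematic parity asymmetry inherent to the recursion \eqref{e.next_word}.

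Minimality of $T$ is routine: $e_k$ occurs inside every $e_n$ (for $n>k$) with consecutive occurrences separated by at most $(m_k+1)|e_k|+1$, so every word $u$ in the language of $Y$---being a subword of some $e_K$---recurs in every $y\in Y$ with uniformly bounded gaps. For unique ergodicity of $T$, let $f_k(u)$ denote the number of occurrences of $u$ in $e_k$ and set $\mu_k(u):=(f_k(u)+f_k(\overline u))/(2|e_k|)$; expanding according to \eqref{e.next_word} yields $|\mu_{k+1}(u)-\mu_k(u)|=O(|u|/|e_k|)$, so $\mu_k(u)\to\mu(u)$. To upgrade this to uniform-in-$y$ frequencies, parse each $y\in Y$ at level $k+1$ into $e_{k+1}$- and $\overline{e_{k+1}}$-blocks; every such block contains exactly $m_k+1$ copies of $e_k$ and of $\overline{e_k}$, so any window of length $N\gg|e_{k+1}|$ sees frequency $\mu(u)(1+o(1))$ for every word $u$.

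The main obstacle is the non-unique ergodicity of $T^2$. The driving asymmetry is this: inside $e_{k+1}$ the first $m_k$ copies of $e_k$ occupy the even positions $0, |e_k|, \ldots, (m_k-1)|e_k|$, while the isolated $e_k$ sitting after the first $0$ occupies the odd position $m_k|e_k|+1$; so among $e_k$-subblocks of $e_{k+1}$ the even-to-odd ratio is $m_k{:}1$. Letting $q_n$ denote the fraction of $e_k$-subblocks of $e_n$ at even positions, the recursion gives $q_n-\tfrac12 = \tfrac{m_{n-1}-1}{m_{n-1}+1}(q_{n-1}-\tfrac12)$, and the summability hypothesis \eqref{e.mk} (via $(m+1)/(m-1)\approx 1+2/m$) makes the telescoping product $\prod_{j>k}\tfrac{m_j-1}{m_j+1}$ strictly positive, so $q_n$ converges to a limit $q^\infty > \tfrac12$. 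This asymmetry persists despite unique ergodicity of $T$: splitting the $T$-Birkhoff sums of $\mathbf{1}_{[\mathord{\uparrow}\mathord{\downarrow}]}$ into even- and odd-indexed parts along an appropriately aligned orbit produces two different values $a_+ \neq a_-$, which gives two distinct $T^2$-invariant probability measures $\nu_0, \nu_1$ averaging to $\nu$. An argument in the spirit of \cref{l.double_ergodicity} then shows that these are the only two ergodic $T^2$-measures.

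Minimality of $T^2$ follows from the same parity-flip mechanism: the coexistence of even- and odd-positioned $e_k$-subblocks inside $e_{k+1}$ ensures that every word $u$ in the language appears in $e_{k+1}$ at positions of both parities for $k$ large, hence at both parities in every $y\in Y$ infinitely often, so the $T^2$-orbit of every point is dense. Finally, zero topological entropy follows from a polynomial bound on the word complexity $p_Y(n)$: a length-$n$ factor of $Y$ is determined by its starting offset inside some $e_K$ with $|e_K| = O(n \cdot m_{K-1})$, where $K=K(n)$ is the smallest index with $|e_K|\ge n$; routine estimates give $m_{K(n)-1}$ polynomial in $n$, so $p_Y$ grows polynomially and $h_{\mathrm{top}}(T) = 0$.
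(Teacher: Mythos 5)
Your overall strategy coincides with the paper's: the same five-fold decomposition, the same hierarchical counting of occurrences of words inside the $e_k$'s for minimality and unique ergodicity of $T$, the same parsing into $k$-elementary blocks (\cref{l.sofic}) for minimality of $T^2$ and for entropy, and the same underlying mechanism for non-unique ergodicity of $T^2$ — a parity bias built into \eqref{e.next_word} that survives in the limit because $\sum 1/m_k<\infty$. The paper packages that bias as the alternating sum $\theta(w)=\sum_i(-1)^i a_i$, which satisfies the clean closed-form recursion $\theta(e_{k+1})=2(m_k-1)\theta(e_k)$; your even/odd block counting is the same computation in different clothing. One caveat there: your scalar recursion for $q_n$ is an oversimplification, because the $\overline{e_n}$-blocks sitting at odd offsets inside $e_{n+1}$ also contribute $e_k$-occurrences with flipped parity, so the even/odd counts for $e_k$ and for $\overline{e_k}$ are coupled and must be tracked together (a $2\times 2$ system, or equivalently the sum $S_k$ and difference $D_k$ of the paper's \cref{l.freq_e} together with $\theta$). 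The coupled system still yields a contraction factor $\frac{m_n-1}{m_n+1+O(1/|e_n|)}$ and hence a strictly positive limiting bias, so your conclusion stands, but the recursion as written is not literally correct.

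The one step whose justification does not hold up is the entropy bound. You claim $p_Y(n)=O(n\cdot m_{K(n)-1})$ with $m_{K(n)-1}$ ``polynomial in $n$,'' but the hypothesis \eqref{e.mk} only requires $m_k\ge 2$ and $\sum 1/m_k<\infty$: the sequence $(m_k)$ may grow arbitrarily fast (say, a tower of exponentials), and the defining inequality $|e_{K(n)-1}|<n$ controls $m_{K(n)-3}$ and earlier terms, not $m_{K(n)-1}$. So that route, as stated, does not give zero entropy. The repair is immediate and is what the paper does: for every $k$, the set $Y$ is contained in the sofic shift $\Sigma_k$ presented by a four-petal graph with petal lengths $|e_k|$ and $|e_k|+1$, whence $h(Y)\le h(\Sigma_k)\le \frac{\log 4}{|e_k|}\to 0$. (Equivalently, evaluate your complexity bound only along the subsequence $n=|e_K|$, where a word of that length meets at most two consecutive $K$-elementary blocks, so $p_Y(|e_K|)\le 16(|e_K|+2)$, and invoke subadditivity of $\log p_Y$.) With these two repairs your argument is sound and is essentially the paper's.
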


The theorem above obviously implies \cref{t.existence_Veech}. 
We proceed with the proof, which is divided into several steps.

\subsection{Entropy and minimality}

Let $\mathcal{L} = \mathcal{L}(Y)$ be the \emph{language} of $Y$, that is, the set of all words that occur in elements of $Y$. Equivalently, $w \in \mathcal{L}$ if and only if $w$ is a subword of some $e_k$.

For each $k>0$, the four words
\begin{equation}\label{e.k-elementary}
e_k \, , \quad  \overline{e_k} \, , \quad  e_k \, 0 \, , \quad \text{and} \quad \overline{e_k} \, 0  
\end{equation}
will be called \emph{$k$-elementary words}.

\begin{lemma}\label{l.sofic}
For each $k>0$, let $\Sigma_k \subseteq \mathsf{A}^\Z$ be the shift space formed by all bi-infinite words obtained by concatenation of $k$-elementary words. 
Then $\Sigma_k \supseteq Y$.
\end{lemma}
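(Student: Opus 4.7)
The plan is to show that every bi-infinite $\omega \in Y$ admits a valid parsing into $k$-elementary blocks by pulling back canonical parsings of the $e_j$'s. The argument has two layers: a purely combinatorial induction about the structure of the $e_j$'s, followed by a compactness extraction for sequences in $Y$.

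First I would establish the following auxiliary claim: for every $j \ge k$, the word $e_j$ admits a parsing as a concatenation of $k$-elementary words whose \emph{last} block is $\overline{e_k}$ (and symmetrically, $\overline{e_j}$ admits a parsing ending in $e_k$). The base case $j=k$ is trivial since $e_k$ is itself a $k$-elementary word, and the recursive formula \eqref{e.next_word} handles the step: in
\begin{equation*}
e_{j+1} = e_j^{m_j} \, 0 \, e_j \, \overline{e_j}^{m_j} \, 0 \, \overline{e_j},
\end{equation*}
group the first isolated $0$ with the $m_j$-th copy of $e_j$ to form the $j$-elementary block $e_j 0$, and similarly for the second $0$; this expresses $e_{j+1}$ as a concatenation of $j$-elementary blocks, each of which is refined to $k$-elementary blocks using the inductive hypothesis. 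The final block of the resulting parsing of $e_{j+1}$ is the final block of $\overline{e_j}$'s parsing, which by induction is $e_k$'s conjugate $\overline{e_k}$, as required; also, since the tail of any parsed $e_j$ is $\overline{e_k}$ (not $\overline{e_k}0$), concatenating an extra $0$ produces exactly the legal block $\overline{e_k}0$, which is what justifies the ``absorb the middle $0$'' move at the next level.

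With the auxiliary claim in hand, I take $\omega \in Y$ and, for each sufficiently large $n$, pick $j_n \ge k$ and a position $p_n$ so that $\omega_{\ldbrack -n, n \rdbrack}$ occurs inside $e_{j_n}$ at position $p_n$. The canonical parsing of $e_{j_n}$ provides a set of block boundaries; translating by $-p_n$ and intersecting with $\ldbrack -n, n+1 \rdbrack$ yields a finite set $B_n$ of boundaries inside the window at which $\omega$ is legally cut into $k$-elementary blocks. Because each $k$-elementary word has length at most $|e_k|+1$, consecutive elements of $B_n$ differ by at most $|e_k|+1$, and the first (resp.\ last) element of $B_n$ is within $|e_k|+1$ of the left (resp.\ right) endpoint of the window.

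Finally, a standard diagonal extraction produces a subsequence $n_\ell \to \infty$ and a set $B \subseteq \Z$ such that, for every $L$, $B \cap \ldbrack -L, L \rdbrack = B_{n_\ell} \cap \ldbrack -L, L \rdbrack$ for all large $\ell$; this $B$ is bi-infinite by the uniform gap bound, and the parsing property passes to the limit block by block, showing $\omega \in \Sigma_k$. The main potential snag will be keeping the bookkeeping between the two layers clean: I will need to verify carefully that refining the $(j-1)$-elementary parsing of $e_j$ to a $k$-elementary parsing preserves the ``ends in $\overline{e_k}$'' invariant, since this is precisely what licenses both the induction step and the absorption of each new isolated $0$ into a legal block at the next scale.
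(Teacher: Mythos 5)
Your proposal follows essentially the same route as the paper: an induction on the recursive formula \eqref{e.next_word} showing that every $e_\ell$ with $\ell\ge k$ decomposes into $k$-elementary blocks (the paper phrases this as ``each $\ell$-elementary word labels a closed walk based at the central vertex of the graph $\mathcal{G}_k$''), followed by a passage from finite words to bi-infinite sequences. For the second layer the paper simply observes $\mathcal{L}(Y)\subseteq\mathcal{L}(\Sigma_k)$ and cites the standard fact that containment of languages implies containment of shift spaces; your diagonal extraction is a correct hands-on proof of exactly that fact, so nothing is lost there.

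The one real problem is the invariant you carry through the induction, namely that the $k$-elementary parsing of $e_j$ ends in $\overline{e_k}$ — precisely the point you flagged as delicate. As stated it is false: it already fails at the base case $j=k$, where the parsing is the single block $e_k$; and it is not preserved by your own inductive step, since $e_{j+1}$ ends in a copy of $\overline{e_j}$, whose parsing (by the symmetric half of your hypothesis) ends in $e_k$, not $\overline{e_k}$. In fact the terminal block alternates between $e_k$ and $\overline{e_k}$ according to the parity of $j-k$. The fix is to weaken the invariant to the only property you actually use: the parsing of $e_j$ (resp.\ $\overline{e_j}$) ends in one of the two blocks $e_k$, $\overline{e_k}$, i.e.\ not in a block with a trailing $0$. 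This weaker invariant holds at $j=k$, is preserved by the recursion, and is exactly what licenses absorbing each isolated $0$ in \eqref{e.next_word} into the preceding block to form a legal $e_k\,0$ or $\overline{e_k}\,0$. With that correction both layers of your argument go through.
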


Before proving the \lcnamecref{l.sofic}, it is convenient to have an alternative description of the shift $\Sigma_k$.
We use the terminology from \cite[{\S}3.1]{LindMarcus}.
Let $\mathcal{G}_k$ be a directed graph consisting of four loops based on a common vertex~$\bullet$ (that is, a ``four-leaf clover''), two of which with length $|e_k|$ and the other two with length $|e_k|+1$. Label the edges in such a way that the label sequences corresponding to the loops are the four $k$-elementary words \eqref{e.k-elementary}. For example, the graph $\mathcal{G}_1$ is depicted in \cref{fig.graph}. Then $\Sigma_k$ is the set of label sequences of infinite walks on $\mathcal{G}_k$. That is, $\Sigma_k$ is the sofic shift presented by $\mathcal{G}_k$. The language $\mathcal{L}(\Sigma_k)$ of the shift $\Sigma_k$ is formed by all label sequences of finite walks in the graph. 

\begin{figure}
	\begin{tikzpicture}[>=latex, font=\footnotesize, scale=1.25]
		\node (zero)	at ( 0, 0) {$\bullet$};
		\node (um) 		at ( 0, 2) {$\circ$};
		\node (dois)	at ( 1, 1) {$\circ$};
		\node (tres)	at ( 2, 0) {$\circ$};
		\node (quatro)	at ( 0,-2) {$\circ$};
		\node (cinco)	at (-1,-1) {$\circ$};
		\node (seis)	at (-2, 0) {$\circ$};
		\draw ( 0,   1) node {$e_1$};
		\draw ( 0,  -1) node {$\overline{e_1}$};
		\draw ( 1, .35) node {$e_1 0$};
		\draw (-1,-.35) node {$\overline{e_1} 0$};
		\draw[->,bend right]	(zero)	to	(um);
		\draw[->] 				(zero)	to	(dois);
		\draw[->,bend left]		(zero)	to	(quatro);
		\draw[->] 				(zero)	to	(cinco);
		\draw[->,bend right]	(um)	to	(zero);
		\draw[->]				(dois)	to	(tres);
		\draw[->]				(tres)	to	(zero);
		\draw[->,bend left]		(quatro)to	(zero);
		\draw[->] 				(cinco) to	(seis);
		\draw[->] 				(seis)	to	(zero);
	\end{tikzpicture}
	\caption{A graph $\mathcal{G}_1$ presenting the shift $\Sigma_1$. Edges going up have label~$\mathord{\uparrow}$, edges going down have label~$\mathord{\downarrow}$, and horizontal edges have label~$0$. The label sequences of the four loops are indicated.}\label{fig.graph}
\end{figure}

\begin{proof}[Proof of \cref{l.sofic}]
We claim that every $\ell$-elementary word with $\ell \ge k$ is the label sequence of a closed walk in the graph $\mathcal{G}_\ell$ starting and ending at the center vertex~$\bullet$. The claim is proved by induction on $\ell$, using the recursive relation \eqref{e.next_word}.
As a consequence of the claim, $e_\ell \in \mathcal{L}(\Sigma_k)$ for every $\ell$. 

Now if $w$ is any word in $\mathcal{L}(Y)$, by definition $w$ is a subword of some $e_\ell$. In particular, $w \in \mathcal{L}(\Sigma_k)$. We have proved that $\mathcal{L}(Y) \subseteq \mathcal{L}(\Sigma_k)$. It follows (see \cite[p.~10]{LindMarcus}) that $Y \subseteq \Sigma_k$.
\end{proof}

\begin{lemma}\label{l.entropy}
The map $T$ has zero topological entropy.
\end{lemma}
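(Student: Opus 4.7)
The strategy is to bound the complexity of $\mathcal{L}(Y)$ directly by exploiting the containment $Y\subseteq\Sigma_k$ from \cref{l.sofic}, and then let $k\to\infty$.

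First I would observe that $|e_k|\to\infty$: the recursion \eqref{e.next_word} gives $|e_{k+1}|=2(m_k+1)|e_k|+2$, and the hypothesis $m_k\ge 2$ yields $|e_{k+1}|\ge 6|e_k|$, so $|e_k|$ grows at least geometrically. I would also note from \eqref{e.k-elementary} that every $k$-elementary word has length either $|e_k|$ or $|e_k|+1$.

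Next, for fixed $k$, I would bound the number $p_Y(N)$ of words of length $N$ in $\mathcal{L}(Y)$. Since $Y\subseteq\Sigma_k$, each such word is a subword of a bi-infinite concatenation of $k$-elementary words. Hence it can be encoded by (i)~a starting offset in the first elementary block, which has at most $|e_k|+1$ possible values, and (ii)~a finite string of at most $\lceil N/|e_k|\rceil+1$ consecutive $k$-elementary words, each chosen among $4$ options. This gives the crude bound
\[
p_Y(N)\;\le\;(|e_k|+1)\cdot 4^{\lceil N/|e_k|\rceil+1}.
\]
Taking logarithms, dividing by $N$, and letting $N\to\infty$ with $k$ fixed yields $h_\mathrm{top}(Y)\le (\log 4)/|e_k|$. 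Since $|e_k|\to\infty$ and $k$ was arbitrary, it follows that $h_\mathrm{top}(Y)=0$.

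The argument has no real obstacle; the only point requiring care is confirming that every $k$-elementary word has length within $\{|e_k|,|e_k|+1\}$, so that the offset in (i)~is bounded uniformly in which elementary block one starts in. An alternative route would be to compute $h_\mathrm{top}(\Sigma_k)=\log\lambda_k$ as the largest root of $2\lambda^{-|e_k|}(1+\lambda^{-1})=1$ and verify $\log\lambda_k\to 0$, but the block-counting estimate above is more direct.
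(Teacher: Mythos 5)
Your proof is correct and follows the same route as the paper: both use the containment $Y\subseteq\Sigma_k$ from \cref{l.sofic} to get $h_\mathrm{top}(Y)\le h_\mathrm{top}(\Sigma_k)\le(\log 4)/|e_k|$ and then let $k\to\infty$. The paper simply asserts the bound on $h(\Sigma_k)$ as clear, whereas you spell out the block-counting estimate behind it; the substance is identical.
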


\begin{proof}
It is clear that the shift $\Sigma_k$ in \cref{l.sofic} has entropy $h(\Sigma_k) \le \frac{\log 4}{|e_k|}$. Therefore, $h(Y) = 0$.
\end{proof}

\begin{lemma}\label{l.minimalities}
$T$ and $T^2$ are minimal. 
\end{lemma}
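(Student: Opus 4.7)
The plan is to exploit the hierarchical structure from \cref{l.sofic} together with the internal symmetry of the recursion \eqref{e.next_word}. Given any word $w \in \mathcal{L}$, fix $k$ such that $w$ is a subword of $e_k$ at some offset $j_0$. A direct inspection of \eqref{e.next_word} shows that both $e_{k+1}$ and $\overline{e_{k+1}}$ contain $e_k$ as a subword (the factor $e_k^{m_k}\,0\,e_k$ appears at the beginning of $e_{k+1}$ and in the second half of $\overline{e_{k+1}}$). The same is therefore true of all four $(k+1)$-elementary words listed in \eqref{e.k-elementary}, so $w$ is a subword of every $(k+1)$-elementary word.

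For $T$-minimality, I would then take an arbitrary $\omega \in Y$ and invoke \cref{l.sofic} to write $\omega$ as a bi-infinite concatenation of $(k+1)$-elementary blocks, each of length at most $|e_{k+1}|+1$. Since every block contains $w$, the set of occurrences of $w$ in $\omega$ is syndetic, so the $T$-orbit of $\omega$ is dense. Because $\omega$ and $w$ were arbitrary, $T$ is minimal.

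For $T^2$-minimality I would refine the previous step by tracking parities of offsets. A one-line induction shows that $|e_k|$ is even for every $k$: we have $|e_1|=2$, and the recursion gives $|e_{k+1}| = (2m_k+2)|e_k|+2$. Consequently, inside $e_{k+1} = e_k^{m_k}\,0\,e_k\,\overline{e_k}^{m_k}\,0\,\overline{e_k}$, the $m_k$ copies of $e_k$ contained in the initial $e_k^{m_k}$ block start at even offsets, whereas the extra copy of $e_k$ following the first $0$ starts at the odd offset $m_k|e_k|+1$. A symmetric computation shows that $\overline{e_{k+1}}$ also contains $e_k$ at offsets of both parities. In particular, every $(k+1)$-elementary word contains $w$ at offsets of both parities. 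Combining offset parity with block-start parity, any $\omega \in Y$ contains $w$ at infinitely many positions of each parity. Since cylinder sets of $Y$ are determined by fixing the symbols on a window $\ldbrack -N, N\rdbrack$, this translates exactly into density of the $T^2$-orbit of every $\omega$, proving that $T^2$ is minimal.

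The main point that requires care is the parity bookkeeping for $T^2$. Without the ``shifted'' copy of $e_k$ produced by the inserted $0$ in \eqref{e.next_word}, every occurrence of $w$ inside a block would share the same offset parity, and $Y$ could then split into two disjoint $T^2$-invariant clopen subsets, obstructing minimality. The evenness of $|e_k|$ combined with the single-letter shift introduced by the $0$'s is precisely what prevents this, while still allowing the two ergodic measures of $T^2$ predicted by \cref{l.double_ergodicity} to be distinct.
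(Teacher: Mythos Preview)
Your argument is correct and follows essentially the same route as the paper: use \cref{l.sofic} to see that every long enough word in $\mathcal{L}$ contains a full elementary block, observe that each such block contains $e_k$ (hence the target word), and for $T^2$ track the parity shift created by the inserted $0$'s. The only cosmetic differences are that the paper uses level $k$ (rather than $k+1$) for $T$-minimality and phrases the criterion in terms of long words in $\mathcal{L}$ rather than bi-infinite $\omega$'s, while leaving the parity check (``$e_k$ appears in $f$ in both odd and even positions'') implicit; your explicit computation that $|e_k|$ is even and that the post-$0$ copy of $e_k$ sits at an odd offset is exactly what justifies that line.
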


Obviously, minimality of $T^2$ implies minimality of $T$, but for the sake of clarity we consider $T$ first. 

\begin{proof}
Recall that a homeomorphism of a compact metric space is minimal if and only if for every nonempty open subset $U$, there exists $n_0>0$ such that every segment of orbit of length $n_0$ intersects $U$ (see \cite[Prop.~1.6.25]{FisherH}).  
Therefore, in the case under consideration, proving that $T$ is minimal is tantamount to showing that given any word $v \in \mathcal{L}$, every sufficiently long word $w \in \mathcal{L}$ contains $v$ as a subword. 
Let us check this property. 
Given $v$, there exists $k>0$ such that $v$ is a subword of either $e_k$ or $\overline{e_k}$. Now suppose that $w \in \mathcal{L}$ has length $|w|>2|e_k|$. 
Then $w$ corresponds to a walk 
of length $>2|e_k|$ on the graph $\mathcal{G}_k$. This walk must traverse one of the four loops. That is, $w$ contains a subword which is $k$-elementary. In particular, $w$ has $v$ as a subword. 
We conclude that $T$ is minimal.

In order to prove that $T^2$ is minimal, it is sufficient to check that given any word $v \in \mathcal{L}$, every sufficiently long word $w \in \mathcal{L}$ contains $v$ as a subword in both odd and even positions. The argument is similar: Given $v \in \mathcal{L}$, take $k$ such that $v$ is a subword of either $e_k$ or $\overline{e_k}$. Now let $w \in \mathcal{L}$ have  length $|w|>2|e_{k+1}|$. Then $w$ contains a subword $f$ which is one of the four $k+1$-elementary words. In any case, $e_k$ appears in $f$ in both odd and even positions. In particular, $v$ occurs as a subword of $w$ in both odd and even positions. 
This shows that $T^2$ is minimal.
\end{proof}

\subsection{Unique ergodicity of $T$}

Given two words $v$ and $w$ in $\mathsf{A}^*$, where $w$ is nonempty, let $|w|_v$ denote the number of occurrences of $v$ as a subword of $w$, that is, the number of possible ways of expressing $w$ as a concatenation $u v u'$. Note that 
\begin{equation}\label{e.silly}
|w|_v \le \max \left( 0, |w| - |v| + 1\right) \le |w| \, .
\end{equation}

\begin{lemma}\label{l.N_bounds}
For all words $v, w_1, \dots, w_p$ in $\mathsf{A}^*$, with $v \neq \emptyset$, we have
\begin{equation}\label{e.N_bounds}
\sum_{i=1}^p |w_i|_v \le |w_1\cdots w_p|_v \le \sum_{i=1}^p |w_i|_v  \ + \ (p-1)(|v|-1) \, .
\end{equation}
\end{lemma}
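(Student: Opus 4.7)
The plan is to classify each occurrence of $v$ inside $w_1\cdots w_p$ as either \emph{internal} (i.e., entirely contained in a single block $w_i$) or \emph{crossing} (overlapping at least two consecutive blocks), and then bound the two kinds of occurrences separately. The lower bound is immediate from this classification: any occurrence of $v$ as a subword of some $w_i$ gives an occurrence of $v$ at the corresponding position in the concatenation, and these are distinct for distinct $(i,\text{position})$. Hence $\sum_{i=1}^p |w_i|_v \le |w_1\cdots w_p|_v$.

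For the upper bound, I would count crossing occurrences by assigning each to the leftmost block boundary it straddles. Fix $i \in \{1,\dots,p-1\}$ and let $N_i \coloneqq |w_1\cdots w_i|$ be the total length before the $i$-th boundary. An occurrence of $v$ starting at position $s$ crosses this boundary precisely when $s \le N_i < s + |v| - 1$, which forces $s \in \{N_i - |v| + 2, \dots, N_i\}$: at most $|v|-1$ starting positions. Since every crossing occurrence is charged to exactly one boundary, the total number of crossing occurrences is at most $(p-1)(|v|-1)$, and adding the contribution $\sum_{i=1}^p |w_i|_v$ of internal occurrences yields the right-hand inequality in \eqref{e.N_bounds}.

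I do not anticipate any real obstacle here; the statement is a purely combinatorial bookkeeping fact, and the only mild subtlety is making sure that a single occurrence is not overcounted when it happens to span more than one boundary, which is handled by the ``leftmost crossed boundary'' convention. An alternative, equivalent presentation would be to induct on $p$, observing that passing from $w_1\cdots w_{p-1}$ to $(w_1\cdots w_{p-1})w_p$ adds the $|w_p|_v$ internal occurrences inside $w_p$ plus at most $|v|-1$ new crossing occurrences at the new boundary; but the direct double-counting argument above is cleaner.
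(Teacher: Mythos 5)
Your proof is correct and rests on the same key observation as the paper's: occurrences of $v$ split into internal ones (counted exactly by $\sum_i |w_i|_v$) and boundary-crossing ones, with at most $|v|-1$ of the latter per boundary. The paper simply packages this as the $p=2$ case followed by induction on $p$ — the alternative you yourself mention — so the two arguments are essentially identical.
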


\begin{proof}
We claim that $|w_1 w_2|_v$ is at least $|w_1|_v + |w_2|_v$ and at most $|w_1|_v + |w_2|_v + |v| - 1$. Indeed, the lower bound corresponds to ``internal'' occurrences of $v$ as a subword of either $w_1$ or $w_2$, while the upper bound takes into account possible ``concatenation'' occurrences. So the \lcnamecref{l.N_bounds} is true in the case $p=2$ (and tautological in the case $p=1$). An induction on $p$ concludes the proof. 
\end{proof}

For every nonempty word $v$ in the language $\mathcal{L}$, we will associated a ``limit frequency'' $\Phi(v)$, as follows:

\begin{lemma}\label{l.freq_e}
For any nonempty $v \in \mathcal{L}$,
the sequences $\frac{|e_k|_v}{|e_k|}$ and $\frac{|\overline{e_k}|_v}{|e_k|}$
converge to common limit $\Phi(v)$, which is strictly positive.
\end{lemma}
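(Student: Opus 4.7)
The plan is to track two sequences in parallel. Write $\alpha_k \coloneqq |e_k|_v / |e_k|$ and $\beta_k \coloneqq |\overline{e_k}|_v / |e_k|$ (using that $|\overline{e_k}| = |e_k|$), and set $s_k \coloneqq \alpha_k + \beta_k$. I will show that $s_k$ is Cauchy, that $\alpha_k - \beta_k \to 0$, and that $s_k$ stays bounded away from $0$; the common limit then gives $\Phi(v) = s_\infty / 2 > 0$.

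First, apply \cref{l.N_bounds} to the concatenation
\begin{equation*}
e_{k+1} = \underbrace{e_k \cdots e_k}_{m_k} \, 0 \, e_k \, \underbrace{\overline{e_k} \cdots \overline{e_k}}_{m_k} \, 0 \, \overline{e_k},
\end{equation*}
which involves $2m_k + 4$ pieces. This yields
\begin{equation*}
|e_{k+1}|_v = (m_k+1)\bigl(|e_k|_v + |\overline{e_k}|_v\bigr) + 2|0|_v + E_k, \qquad |E_k| \le (2m_k+3)(|v|-1).
\end{equation*}
An identical analysis applies to $\overline{e_{k+1}}$ because its recursion has the same block structure (with the roles of $e_k$ and $\overline{e_k}$ swapped). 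Dividing by $|e_{k+1}| = 2(m_k+1)|e_k| + 2$ and using $\frac{(m_k+1)|e_k|}{|e_{k+1}|} = \tfrac{1}{2} - \tfrac{1}{2|e_{k+1}|}$, I obtain
\begin{equation*}
\alpha_{k+1} = \tfrac{1}{2}(\alpha_k + \beta_k) + O\!\bigl(|v|/|e_k|\bigr), \qquad \beta_{k+1} = \tfrac{1}{2}(\alpha_k + \beta_k) + O\!\bigl(|v|/|e_k|\bigr),
\end{equation*}
where the implicit constant is absolute (here I use that $m_k \ge 2$, hence $|e_{k+1}| \ge 6|e_k|$, so $|e_k|$ grows at least geometrically and in particular $\sum 1/|e_k| < \infty$).

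The two displays above give $|\alpha_{k+1} - \beta_{k+1}| = O(|v|/|e_k|) \to 0$ and $|s_{k+1} - s_k| = O(|v|/|e_k|)$, so $s_k$ is Cauchy and converges to some $s_\infty \in [0, 2]$. Hence both $\alpha_k$ and $\beta_k$ converge to $\Phi(v) \coloneqq s_\infty / 2$.

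It remains to prove $\Phi(v) > 0$. Since $v \in \mathcal{L}$, there is some $k_0$ with $|e_{k_0}|_v \ge 1$ (the case $v \in \overline{e_{k_0}}$ is symmetric). From the recursion for $\overline{e_{k+1}}$, one sees that $\overline{e_{k_0+1}}$ also contains $e_{k_0}$ as a block, so $|\overline{e_{k_0+1}}|_v \ge 1$ as well; thus $s_{k_0+1} > 0$. Dropping the nonnegative terms $2|0|_v + E_k^+$ from the lower bound in \cref{l.N_bounds} gives the one-sided inequality
\begin{equation*}
s_{k+1} \ge \left(1 - \tfrac{1}{|e_{k+1}|}\right) s_k,
\end{equation*}
and iterating yields $s_\infty \ge s_{k_0+1} \prod_{j > k_0+1}\!\bigl(1 - 1/|e_j|\bigr) > 0$ because $\sum 1/|e_j| < \infty$. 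This establishes $\Phi(v) > 0$.

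The main bookkeeping obstacle is verifying that the error $E_k$ from \cref{l.N_bounds} remains small compared to $|e_{k+1}|$; this is precisely the point where the geometric growth $|e_{k+1}| \gtrsim m_k |e_k|$ (equivalently, the recursion's multiplicative structure) is used, both to drive the Cauchy argument and to force the infinite product in the positivity step to be nonzero.
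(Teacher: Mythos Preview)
Your proof is correct and follows essentially the same strategy as the paper: both track the sum $|e_k|_v + |\overline{e_k}|_v$ and the difference $|e_k|_v - |\overline{e_k}|_v$ through the recursion \eqref{e.next_word} via \cref{l.N_bounds}, showing the normalized sum converges to a positive limit (you use a Cauchy/infinite-product argument, the paper a telescoping ratio) while the normalized difference vanishes. Two harmless arithmetic slips: $\frac{(m_k+1)|e_k|}{|e_{k+1}|} = \tfrac{1}{2} - \tfrac{1}{|e_{k+1}|}$, not $\tfrac{1}{2} - \tfrac{1}{2|e_{k+1}|}$, and correspondingly your one-sided bound should read $s_{k+1} \ge \bigl(1 - \tfrac{2}{|e_{k+1}|}\bigr) s_k$.
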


\begin{proof}
Fix a nonempty word $v \in \mathcal{L}$. 
Applying \cref{l.N_bounds} to the recursive formula \eqref{e.next_word}, we obtain
\begin{equation}
(m_k + 1) (|e_k|_v + |\overline{e_k}|_v) \le |e_{k+1}|_v \le (m_k + 1) (|e_k|_v + |\overline{e_k}|_v) + (2m_k + 3) (|v|-1) \, .
\end{equation}
In particular, 
\begin{equation}
|e_{k+1}|_v = (m_k+1)(|e_k|_v + |\overline{e_k}|_v) + O(m_k) \, .
\end{equation}
Similarly, 
\begin{equation}
|\overline{e_{k+1}}|_v = (m_k+1)(|e_k|_v + |\overline{e_k}|_v) + O(m_k) \, .
\end{equation}
Let $S_k \coloneqq |e_k|_v + |\overline{e_k}|_v$ and $D_k \coloneqq |e_k|_v - |\overline{e_k}|_v$. 
Then 
\begin{align}
S_{k+1} &= 2(m_k+1)S_k + O(m_k) \quad \text{and} \label{e.next_sum}
\\
D_{k+1} &= O(m_k) \, . \label{e.spidey}
\end{align}
There exists $k_0$ such that $S_{k_0}>0$. 
If $k \ge k_0$, then $S_k > 2^{k-k_0} S_{k_0}$ and 
\begin{align}
\frac{S_{k+1}}{S_k} 
= 2(m_k+1) + O \left(\frac{m_k} {S_k} \right)
&= 2(m_k+1) \left(1 + O \left(\frac{1}{S_k} \right)\right) \\ 
&= 2(m_k+1) \left(1 + O(2^{-k})\right) \, . \label{e.Saturday}
\end{align} 
On the other hand, by the recursion \eqref{e.next_word}, 
\begin{equation}\label{e.len_rec}
|e_{k+1}| = 2(m_k+1)|e_k|+2 \, .
\end{equation}
So $|e_k| \ge 2^k$ and 
\begin{equation}
\frac{|e_{k+1}|}{|e_k|} = 2(m_k+1)\left(1 + O(2^{-k})\right) \, .
\end{equation}
Using \eqref{e.Saturday} we obtain that for all $k \ge k_0$,
\begin{equation}
\frac{S_{k+1}}{|e_{k+1}|} = \frac{S_k}{|e_k|} \,  r_k \, , \quad 
\text{where } r_k >0  \text{ and } r_k = 1+ O(2^{-k}) \, .
\end{equation}
It follows that $\frac{S_k}{|e_k|} = \frac{S_{k_0}}{|e_{k_0}|} \prod_{j=k_0}^k r_j$  converges as $k \to \infty$ to a finite non-zero limit, which we call $2\Phi(v)$.

Next, by \eqref{e.spidey} and \eqref{e.len_rec}, we have 
$\frac{D_{k+1}}{|e_{k+1}|} = O \left( \frac{1}{|e_k|} \right) = o(1)$.
Therefore the two sequences 
\begin{equation}
\frac{|e_k|_v}{|e_k|}
=\frac{1}{2}\left(\frac{S_k}{|e_k|}+\frac{D_k}{|e_k|}\right) 
\quad \text{and} \quad 
\frac{|\overline{e_k}|_v}{|e_k|}
=\frac{1}{2}\left(\frac{S_k}{|e_k|}-\frac{D_k}{|e_k|}\right)
\end{equation}
converge to the same positive number $\Phi(v)$, as we wanted to show.
\end{proof}

The next lemma builds on \cref{l.freq_e}:

\begin{lemma}\label{l.freq_w}
For every nonempty word $v \in \mathcal{L}$ and every $\varepsilon>0$, there exists $n>0$ such that if $w \in \mathcal{L}$ and $|w|>n$, then $\frac{|w|_v}{|w|} = \Phi(v) + O(\varepsilon)$.
\end{lemma}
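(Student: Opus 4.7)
The plan is to reduce the statement to the asymptotics in \cref{l.freq_e} by decomposing $w$ into long $k$-elementary blocks (plus short boundary pieces) using \cref{l.sofic}, then bounding $|w|_v$ from above and below via \cref{l.N_bounds}.

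Given $v \in \mathcal{L}$ nonempty and $\varepsilon > 0$, I would first choose $k$ large enough that both $|e_k|_v/|e_k|$ and $|\overline{e_k}|_v/|e_k|$ lie within $\varepsilon$ of $\Phi(v)$, and simultaneously $|v| \le \varepsilon |e_k|$; \cref{l.freq_e}, together with the fact that $|e_k| \to \infty$, makes this possible. Next, since $w \in \mathcal{L}(Y) \subseteq \mathcal{L}(\Sigma_k)$ and $\Sigma_k$ is the sofic shift presented by the four-leaf clover graph $\mathcal{G}_k$, any finite walk in $\mathcal{G}_k$ consists of a proper suffix of a loop, followed by complete loops, followed by a proper prefix of a loop. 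This yields a decomposition
$$w = w_0\, f_1 f_2 \cdots f_p\, w_{p+1},$$
where each $f_i$ is one of the four $k$-elementary words and $w_0$, $w_{p+1}$ are, respectively, a suffix and a prefix of some $k$-elementary word, both of length at most $|e_k|+1$. In particular $p \le |w|/|e_k|$.

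For each block $f_i$, a direct comparison with $e_k$ or $\overline{e_k}$ (the only difference being a possible trailing $0$, which alters $|\cdot|_v$ by at most $|v|$) combined with the choice of $k$ gives
$$|f_i|_v = \Phi(v)\, |f_i| + O(\varepsilon\, |f_i|),$$
and summing over $i$ yields $\sum_{i=1}^p |f_i|_v = \Phi(v) \sum_{i=1}^p |f_i| + O(\varepsilon\, |w|)$. Applying \cref{l.N_bounds} to the full decomposition produces the bounds
$$\sum_{i=1}^p |f_i|_v \;\le\; |w|_v \;\le\; |w_0| + |w_{p+1}| + \sum_{i=1}^p |f_i|_v + (p+1)(|v|-1),$$
and the total boundary error is at most $2(|e_k|+1) + (p+1)(|v|-1) = O(|e_k|) + \varepsilon\, |w|$, using $p\,|v| \le (|w|/|e_k|)\cdot \varepsilon |e_k| = \varepsilon\, |w|$. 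Putting these together,
$$|w|_v = \Phi(v)\,|w| + O(\varepsilon\, |w|) + O(|e_k|),$$
so choosing $n := |e_k|/\varepsilon$ absorbs the last term and delivers $|w|_v / |w| = \Phi(v) + O(\varepsilon)$ for all $w \in \mathcal{L}$ with $|w| > n$.

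I do not expect any serious obstacle here. The only point worth double-checking is that the decomposition supplied by \cref{l.sofic} really does have short boundary pieces, but this is immediate from the structure of $\mathcal{G}_k$ as a bouquet of four loops sharing the single base vertex $\bullet$.
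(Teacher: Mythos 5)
Your proof is correct and follows essentially the same route as the paper: decompose $w$ into $k$-elementary blocks with short boundary pieces via \cref{l.sofic}, control each block by \cref{l.freq_e}, and assemble with \cref{l.N_bounds}, choosing $n$ of order $|e_k|/\varepsilon$. Your explicit remark that the words $e_k\,0$ and $\overline{e_k}\,0$ differ from $e_k$ and $\overline{e_k}$ only by a trailing $0$ (changing $|\cdot|_v$ by at most $|v|$) is a small point the paper glosses over, and is handled correctly.
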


\begin{proof}
Fix a nonempty word $s \in \mathcal{L}$ and a positive number $\varepsilon$.
Using \cref{l.freq_e}, we see that if $k$ is large enough, we have
\begin{equation}\label{e.coca_cola}
\left| \frac{|u|_v}{|f|} - \Phi(v) \right| < \varepsilon \quad \text{for each $k$-elementary word $f$.}
\end{equation} 
Fix such a large enough $k$ with $|v|< \varepsilon |e_k|$.
Then choose an integer $n > \varepsilon^{-1} (|e_k|+1)$, and let $w$ be any word in $\mathcal{L}$ with length at least $n$.
As a consequence of \cref{l.sofic}, the word $w$ admits a factorization
\begin{equation}\label{e.factors_w}
	w = f_1 f_2 \cdots f_{p-1} f_p
\end{equation}
where the inner factors $f_2, \dots, f_{p-1}$ are $k$-elementary words,
and the extremal factors $f_1$ and $f_p$ have length at most $|e_k|$.
Note that
$|w| \ge (p-2)|e_k|$.
Applying \cref{l.N_bounds} to the factorization \eqref{e.factors_w}, we obtain 
\begin{equation}
\sum_{i=2}^{p-1} |f_i|_v \le |w|_v 
< \sum_{i=1}^p |f_p|_v +  p|v|  \, .
\end{equation}
The difference between the upper and the lower bound is
\begin{align}
|f_0| + |f_1| + p|v|
< 2|e_k| + p \varepsilon |e_k|  
\le 2|e_k| + \varepsilon (|w| - 2|e_k|)
= O(\varepsilon |w|).
\end{align}
For each $i$ between $2$ and $p-1$, by \eqref{e.coca_cola} we have $|f_i|_v = \Phi(v) |f_i| + O(\varepsilon |f_i|)$.
Also,
\begin{equation}
\sum_{i=2}^{p-1} |f_i| = |w| - |f_0| - |f_1| = |w| + O(\varepsilon |w|) \, .
\end{equation}
It follows that
\begin{equation}
|w|_v 
= \sum_{i=2}^{p-1} |f_i|_v + O(\varepsilon |w|) 
= \Phi(v) |w| + O(\varepsilon |w|) \, ,
\end{equation}
as we wanted to show. 
\end{proof}

\begin{lemma}\label{l.ue}
$T$ is uniquely ergodic.
\end{lemma}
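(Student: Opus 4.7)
The strategy is to apply the classical Oxtoby criterion: a continuous map on a compact metric space is uniquely ergodic if and only if, for every continuous test function, the Birkhoff averages converge uniformly. On a shift space, continuous functions are uniformly approximated by cylinder functions, so it suffices to check uniform convergence of the Birkhoff averages of indicator functions of cylinders $[v]$, where $v$ ranges over nonempty words in $\mathcal{L}$.

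First I would translate this into a statement about word frequencies. For $\omega \in Y$ and $n \ge 1$,
\begin{equation}
\sum_{i=0}^{n-1} \mathbf{1}_{[v]}(T^i \omega) = \big| \omega_{\ldbrack 0,\, n+|v|-2 \rdbrack} \big|_v \, ,
\end{equation}
since the left-hand side counts exactly the number of occurrences of $v$ as a subword of $\omega$ starting at positions $0, 1, \dots, n-1$. Writing $w \coloneqq \omega_{\ldbrack 0,\, n+|v|-2 \rdbrack}$, this word has length $n + |v| - 1$ and lies in $\mathcal{L}$.

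Then I would invoke \cref{l.freq_w}: given $\varepsilon>0$, there exists $n_0$ such that every $w \in \mathcal{L}$ with $|w|>n_0$ satisfies $\big||w|_v/|w| - \Phi(v)\big| < \varepsilon$. Consequently, for all $n$ large enough (independently of $\omega \in Y$),
\begin{equation}
\left| \frac{1}{n} \sum_{i=0}^{n-1} \mathbf{1}_{[v]}(T^i \omega) - \Phi(v) \right| < 2\varepsilon \, ,
\end{equation}
where the harmless extra factor accounts for the difference between $n$ and $n + |v| - 1$ in the denominator, and uses that $|v|$ is fixed. This is exactly the uniform convergence required by Oxtoby's criterion, so $T$ is uniquely ergodic, with unique invariant measure determined by $\mu([v]) = \Phi(v)$.

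There is no serious obstacle at this step: all the work has already been done in \cref{l.freq_w}, and the only thing to do is verify that uniform convergence of word frequencies implies uniform convergence of Birkhoff averages on cylinders. The only mild care is passing from cylinders to arbitrary continuous functions, which is a standard density argument.
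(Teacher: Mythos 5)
Your proposal is correct and follows essentially the same route as the paper: reduce unique ergodicity to uniform convergence of Birkhoff averages of cylinder indicator functions (the Oxtoby criterion, cited in the paper as \cite[Proposition~4.1.15]{KatokH}), translate those Birkhoff sums into subword-occurrence counts, and invoke \cref{l.freq_w}. The only difference is that you spell out the bookkeeping identity between Birkhoff sums and $|w|_v$, which the paper leaves implicit.
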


\begin{proof}
Given a nonempty word $v$ in $\mathcal{L}$, consider the cylinder
${}_0 [v]$ consisting of all $\omega \in Y$ such that $\omega_{\ldbrack 0, |v|-1 \rdbrack} = s$.
It follows from \cref{l.freq_w} that the Birkhoff averages of the characteristic function of this cylinder converge uniformly to a constant, namely $\Phi(v)$. 
The same is true for more general cylinders ${}_n [v] \coloneqq T^{-n}({}_0 [v])$.
The set of linear combinations of characteristic functions of cylinders is dense in $C^0(Y,\R)$.
Therefore (see \cite[Proposition~4.1.15]{KatokH}), $T$ is uniquely ergodic. 
\end{proof}

\subsection{Non-unique ergodicity of $T^2$}

Let us define two auxiliary functions $\varphi$ , $\psi$ on $Y$ as follows:
\begin{align}
\label{e.def_phi}
\varphi(\omega) &\coloneqq
\begin{cases}
\phantom{-}1	&\text{if } \omega_0 = \mathord{\uparrow} \, , \\ 
-1 				&\text{if } \omega_0 = \mathord{\downarrow} \, , \\
\phantom{-}0	&\text{if } \omega_0 = 0 \, ,
\end{cases}
\\
\label{e.def_psi}
\psi &\coloneqq \varphi - \varphi \circ T \, .
\end{align}
If we identify the letters $\mathord{\uparrow}$ and $\mathord{\downarrow}$ with the numbers $1$ and $-1$, respectively, then
\begin{equation}
\varphi(\omega) = \omega_0 \quad \text{and} \quad
\psi(\omega) = \omega_0 - \omega_1 \, .
\end{equation}

We define another auxiliary function $\theta \colon \mathcal{L} \to \R$ as follows: $\theta(\emptyset) = 0$ and, if $w \neq \emptyset$,
\begin{equation}
\theta(w) \coloneqq \sum_{i=0}^{|w|-1} (-1)^i  \varphi(T^i(\omega)) \, ,
\end{equation}
where $\omega$ is any element of the cylinder ${}_0 [w]$.
So, in terms of the letter identifications $\mathord{\uparrow} = 1$ and $\mathord{\downarrow} = -1$,
if the word $w$ is spelt $a_0 \cdots a_{n-1}$, then $\theta(w) =  \sum_{i=0}^{n-1} (-1)^i a_i$.
The function $\theta$ has the following properties:
\begin{equation}\label{e.theta_properties}
\theta(\overline{w}) = - \theta(w) \, , \quad
\theta(w v) = \theta(w) + (-1)^{|w|} \theta(v) 
\end{equation}

\begin{lemma}\label{l.c_freq}
The sequence $\frac{\theta(e_k)}{|e_k|}$ converges as $k \to \infty$ to a number $c>0$. 
\end{lemma}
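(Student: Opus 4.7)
The plan is to derive an explicit recursion for $\theta(e_k)$, combine it with the length recursion \eqref{e.len_rec} to write $\theta(e_k)/|e_k|$ as a telescoping product, and show that this product converges to a positive number using the summability hypothesis $\sum 1/m_k < \infty$.

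First, I will note by induction on $k$, using \eqref{e.len_rec}, that $|e_k|$ is always even (the base case is $|e_1|=2$). Thanks to this parity fact, the sign $(-1)^{|w|}$ appearing in \eqref{e.theta_properties} depends only on the parity of the starting position of each block when we concatenate the six factors of $e_{k+1} = e_k^{m_k} \, 0 \, e_k \, \overline{e_k}^{m_k} \, 0 \, \overline{e_k}$. Tracking the starting positions block by block, the first block $e_k^{m_k}$ begins at position $0$ (even) and contributes $m_k\theta(e_k)$; the separating $0$ contributes nothing; the isolated $e_k$ begins at the odd position $m_k|e_k|+1$ and contributes $-\theta(e_k)$; the block $\overline{e_k}^{m_k}$ begins at an odd position and, using $\theta(\overline{w})=-\theta(w)$, contributes $m_k\theta(e_k)$; the next $0$ again contributes nothing; and the trailing $\overline{e_k}$ begins at an even position and contributes $-\theta(e_k)$. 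Summing gives
\begin{equation*}
\theta(e_{k+1}) = 2(m_k-1)\,\theta(e_k).
\end{equation*}

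Combining this with $|e_{k+1}| = 2(m_k+1)|e_k|+2$ from \eqref{e.len_rec} yields the multiplicative recursion
\begin{equation*}
\frac{\theta(e_{k+1})}{|e_{k+1}|} = \frac{\theta(e_k)}{|e_k|}\cdot r_k, \qquad r_k \coloneqq \frac{m_k-1}{m_k+1+1/|e_k|}.
\end{equation*}
The hypothesis $\sum 1/m_k < \infty$ forces $m_k\to\infty$, so for large $k$
\begin{equation*}
0 < 1 - r_k = \frac{2+1/|e_k|}{m_k+1+1/|e_k|} = O(1/m_k),
\end{equation*}
and therefore $\sum_k \log r_k$ converges. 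Consequently the infinite product $\prod_k r_k$ converges to a strictly positive number, and since $\theta(e_1)/|e_1| = 1$, we conclude that $\theta(e_k)/|e_k|$ converges to some $c > 0$.

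The only delicate step is the parity bookkeeping in the derivation of the recursion for $\theta(e_{k+1})$; once the evenness of $|e_k|$ is in hand, every other piece is a direct calculation, and the main conceptual point is the matching of the growth rate $m_k$ in the length recursion with the factor $m_k-1$ in the $\theta$-recursion, which is precisely what is saved by the summability condition on $1/m_k$.
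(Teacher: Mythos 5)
Your proof is correct and follows essentially the same route as the paper: both derive the recursion $\theta(e_{k+1})=2(m_k-1)\theta(e_k)$, combine it with $|e_{k+1}|=2(m_k+1)|e_k|+2$ to get the ratio $r_k=\frac{m_k-1}{m_k+1+|e_k|^{-1}}$, and use $\sum 1/m_k<\infty$ to show the infinite product converges to a positive limit. Your explicit parity bookkeeping just fills in a computation the paper leaves implicit.
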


\begin{proof}
Using the definitions \eqref{e.first_word}, \eqref{e.next_word} and the properties \eqref{e.theta_properties}, we obtain:
\begin{equation}
\theta(e_1)=2  \quad \text{and} \quad \theta(e_{k+1}) = 2 (m_k - 1) \theta(e_k) \, .
\end{equation}
Using the recursive relation \eqref{e.len_rec}, we obtain
\begin{equation}
\frac{\theta(e_{k+1})}{|e_{k+1}|} = \frac{\theta(e_k)}{|e_k|} \, r_k \quad \text{where} \quad 
r_k = \frac{m_k-1}{m_k+1+|e_k|^{-1}} \, . 
\end{equation}
Since $m_k \ge 2$ and $|e_k| \ge 2$, we have
\begin{equation}
0 < \frac{m_k - 1}{m_k + 1.5} \le r_k < 1 
\end{equation}
and 
\begin{equation}
\sum_{k=1}^\infty (1-r_k) \le \sum_{k=1}^\infty \frac{2.5}{m_k + 1.5} < \infty \, ,
\end{equation}
by the convergence condition in \eqref{e.mk}.
If follows that $c \coloneqq \lim \frac{\theta(e_k)}{|e_k|}$ exists and is positive. 
\end{proof}

\begin{lemma}\label{l.square_not_ue}
$T^2$ is not uniquely ergodic.
\end{lemma}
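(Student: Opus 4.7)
The plan is to produce, for each $k$, a point $\omega^{(k)} \in Y$ whose Birkhoff average of $\psi$ under $T^2$ along a time window of length $|e_k|/2$ is close to the nonzero constant $2c$, and then compare with the unique $T$-invariant measure $\nu$, under which $\psi$ clearly integrates to zero.

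First, I would record the elementary observation that every $|e_k|$ is even: this follows by induction from $|e_1|=2$ and the recursion $|e_{k+1}| = 2(m_k+1)|e_k|+2$ proved in \eqref{e.len_rec}. Setting $n_k \coloneqq |e_k|/2$, I note that the cylinder $_0[e_k]$ is nonempty because $e_k$ occurs as a subword of $e_{k+1}$, so it lies in the language $\mathcal{L}$. Pick any $\omega^{(k)} \in {}_0[e_k]$.

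Now comes the key identity: for $0 \le i \le |e_k|-1$, the definition of $\varphi$ gives $\varphi(T^i \omega^{(k)}) = (e_k)_i$ under the identification $\mathord\uparrow = 1$, $\mathord\downarrow = -1$. Using $\psi = \varphi - \varphi\circ T$ and pairing consecutive terms, one obtains the telescoping
\begin{equation}
\sum_{j=0}^{n_k - 1} \psi(T^{2j}\omega^{(k)})
\;=\; \sum_{i=0}^{|e_k|-1} (-1)^i (e_k)_i
\;=\; \theta(e_k).
\end{equation}
Dividing by $n_k = |e_k|/2$ and invoking \cref{l.c_freq}, the Birkhoff average of $\psi$ along the orbit of $\omega^{(k)}$ under $T^2$ over $n_k$ steps equals $2\theta(e_k)/|e_k|$, which converges to $2c > 0$.

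Finally, assume for contradiction that $T^2$ is uniquely ergodic. Since the unique $T$-invariant measure $\nu$ (from \cref{l.ue}) is also $T^2$-invariant, it would be the unique $T^2$-invariant probability measure. By the standard characterization of unique ergodicity, the Birkhoff averages $\frac{1}{n}\sum_{j=0}^{n-1} \psi \circ T^{2j}$ would converge uniformly on $Y$ to $\int \psi \, d\nu$. But $\psi = \varphi - \varphi\circ T$ is a $T$-coboundary, so $\int \psi \, d\nu = 0$. This contradicts the existence of the points $\omega^{(k)}$ with averages approaching $2c > 0$. The only subtle point is verifying that $c > 0$, but this is already in \cref{l.c_freq}; the telescoping identity and the parity of $|e_k|$ are the only new ingredients, and both are immediate.
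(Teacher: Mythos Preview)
Your proof is correct and follows essentially the same approach as the paper: pick $\omega^{(k)} \in {}_0[e_k]$, compute the $T^2$-Birkhoff average of $\psi$ over $n_k = |e_k|/2$ steps as $2\theta(e_k)/|e_k| \to 2c$, and contrast with $\int \psi\, d\nu = 0$ via the coboundary identity. Your explicit verification that $|e_k|$ is even and your spelled-out telescoping are minor elaborations of steps the paper leaves implicit.
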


\begin{proof}
By \cref{l.ue}, $T$ admits a unique invariant probability measure, say $\nu$.
Consider the function $\psi$ defined in \eqref{e.def_psi}.
Since this is a coboundary with respect to $T$, we have $\int \psi \, d\nu = 0$.
If $T^2$ were uniquely ergodic, then its invariant measure would be $\nu$,
and therefore the Birkhoff averages of $\psi$ with respect to $T^2$ would converge uniformly to $0$.
On the other hand, for each $k>0$ we can find a point $\omega$ in the cylinder ${}_0 [e_k]$. Let $n \coloneqq |e_k|/2$ and consider the Birkhoff average
\begin{equation}
\frac{1}{n} \sum_{i=0}^{n-1} \psi(T^{2i} (\omega)) 
= \frac{2 \theta(\omega_{\ldbrack 0,|e_k|-1 \rdbrack})}{|e_k|} 
= \frac{2 \theta(e_k)}{|e_k|} \, .
\end{equation}
By \cref{l.c_freq}, this quantity is close to $2c \neq 0$ if $k$ is large enough. 
So $T^2$ cannot be uniquely ergodic. 
\end{proof}

The combination of \cref{l.entropy,l.minimalities,l.ue,l.square_not_ue} yields \cref{t.our_Veech}.

\subsection{Appendix: An explicit Walters cocycle}\label{ss.appendix}

We have explicitly constructed a Veech-like map $T$ in \cref{t.our_Veech}.
Then \cref{t.Walters} guarantees the existence of a Walters cocycle with base $T$.
However, the proof of the latter \lcnamecref{t.Walters} is slightly indirect. 
For the sake of completeness, let us provide an explicit example of Walters cocycle with base $T$.

\begin{theorem}\label{t.our_Walters}
Let $T \colon Y \to Y$ be the Veech-like map constructed above. 
Let $B \colon Y \to \mathrm{SL}^{\smallpm}(2,\R)$ be defined by 
\begin{equation}\label{e.our_Walters}
B(\omega) = \begin{pmatrix} 0 & e^{\varphi(\omega)} \\ e^{-\varphi(\omega)} & 0 \end{pmatrix} \, , 
\end{equation}
where $\varphi$ is as in \eqref{e.def_phi}. 
Then $(T,B)$ is a Walters cocycle, and its Lyapunov exponent with respect to the unique invariant measure equals the number $c$ from \cref{l.c_freq}.
\end{theorem}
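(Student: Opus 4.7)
The cocycle $B$ has exactly the form \eqref{e.Walters} with $b = e^{\varphi}$, so the reasoning in the proof of \cref{t.Walters} applies verbatim. Setting $\psi \coloneqq \varphi - \varphi \circ T$ one has $B^{(2n)}(\omega) = \mathrm{diag}(e^{S_n(\omega)}, e^{-S_n(\omega)})$ where $S_n(\omega) = \sum_{j=0}^{n-1}\psi(T^{2j}\omega)$, and the identification $\varphi(T^i\omega) = \omega_i$ makes $S_n(\omega) = \theta(\omega_{\llbracket 0, 2n-1\rrbracket})$. Since $\psi$ is a $T$-coboundary, $\int \psi\, d\nu = 0$, and using \cref{l.double_ergodicity} this gives $\int \psi\, d\nu_1 = -\int \psi\, d\nu_0$. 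The argument in the proof of \cref{t.Walters} then identifies the Lyapunov exponent with respect to $\nu$ as $\tfrac{1}{2}\bigl|\int \psi\, d\nu_0\bigr|$, and yields non-uniform hyperbolicity as soon as this quantity is positive. So everything reduces to verifying $\bigl|\int \psi\, d\nu_0\bigr| = 2c$.

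For the \emph{upper bound}, I would establish the uniform combinatorial estimate
\begin{equation*}
	\limsup_{N\to\infty}\sup_{w\in\mathcal{L},\,|w|=N}\frac{|\theta(w)|}{N} \le c.
\end{equation*}
Given $\varepsilon>0$, pick $k$ with $|\theta(e_k)|/|e_k|<c+\varepsilon$ (\cref{l.c_freq}). By \cref{l.sofic}, any $w\in\mathcal{L}$ with $|w|=N$ is a walk in $\mathcal{G}_k$, and factors as $w = f_0 f_1 \cdots f_p f_{p+1}$ where $f_1,\dots,f_p$ are $k$-elementary (of length $|e_k|$ or $|e_k|+1$) and $|f_0|,|f_{p+1}|\le |e_k|$. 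Iterating $\theta(uv)=\theta(u)+(-1)^{|u|}\theta(v)$ together with $|\theta(f_i)| = |\theta(e_k)|$ for inner factors and $|\theta(f_j)|\le|f_j|$ at the boundary, the triangle inequality gives $|\theta(w)|\le 2|e_k| + p|\theta(e_k)|$; since $p\le N/|e_k|$, one gets $|\theta(w)|/N \le c + 2\varepsilon$ once $N \gg |e_k|$. Applying this to a $\nu_0$-generic $\omega$ via the ergodic theorem yields $\bigl|\int\psi\,d\nu_0\bigr| = \lim_n |S_n(\omega)|/n \le 2c$.

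For the \emph{lower bound}, note that because $e_k$ is a prefix of $e_{k+1}$, the nested cylinders ${}_0[e_k]\cap Y$ have nonempty compact intersection; pick any $\omega^*$ in it, so $\omega^*_{\llbracket 0,|e_k|-1\rrbracket}=e_k$ for every $k$. Setting $n_k\coloneqq|e_k|/2$, we get $S_{n_k}(\omega^*)=\theta(e_k)$ and hence $S_{n_k}(\omega^*)/n_k\to 2c$. Any weak-$*$ cluster point $\mu^*$ of the empirical measures $\tfrac{1}{n_k}\sum_{j=0}^{n_k-1}\delta_{T^{2j}\omega^*}$ is a $T^2$-invariant probability measure with $\int\psi\,d\mu^*=2c$. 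Writing $\mu^*=\alpha\nu_0+(1-\alpha)\nu_1$ (using \cref{l.double_ergodicity}), this gives $(2\alpha-1)\int\psi\,d\nu_0=2c$, forcing $\bigl|\int\psi\,d\nu_0\bigr|\ge 2c$. Combined with the upper bound, $\bigl|\int\psi\,d\nu_0\bigr|=2c$, so the Lyapunov exponent equals $c$. The anticipated main obstacle is the uniform combinatorial estimate above; once it is in hand, the convex-decomposition step closes the argument automatically (and in fact forces $\alpha\in\{0,1\}$, so the cluster point $\mu^*$ is necessarily one of $\nu_0,\nu_1$).
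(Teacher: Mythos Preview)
Your argument is correct and takes a genuinely different route from the paper's. Both proofs reduce the problem to verifying $\bigl|\int\psi\,d\nu_0\bigr|=2c$, but the paper establishes this in one stroke via \cref{l.no_other_limits}: it shows that for \emph{any} point $\omega$ where the forward and backward alternating-sign Birkhoff limits both exist, at least one of them equals $c$; applying this to a $\nu_0$-generic point (where both limits equal $\tfrac12\int\psi\,d\nu_0$ by the two-sided ergodic theorem) forces $\int\psi\,d\nu_0=\pm 2c$. That lemma requires a somewhat delicate case analysis of prefixes and suffixes of $(k{+}1)$-elementary words.

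Your approach instead decouples the two inequalities. For the upper bound you prove a uniform estimate $\limsup_{|w|\to\infty}|\theta(w)|/|w|\le c$ over all $w\in\mathcal{L}$, using only the factorization from \cref{l.sofic} and the identity $|\theta(f)|=|\theta(e_k)|$ for every $k$-elementary word $f$; this is simpler than the paper's combinatorics. For the lower bound you avoid combinatorics entirely by a soft weak-$*$ compactness argument: the special point $\omega^*\in\bigcap_k{}_0[e_k]$ produces empirical averages with $\int\psi\,d\mu^*=2c$, and the ergodic decomposition $\mu^*=\alpha\nu_0+(1-\alpha)\nu_1$ from \cref{l.double_ergodicity} immediately yields $\bigl|\int\psi\,d\nu_0\bigr|\ge 2c$. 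What the paper's route buys is a stronger pointwise statement (\cref{l.no_other_limits}) valid for every regular point; what your route buys is a shorter, more modular proof of exactly the quantity needed, with the two halves using essentially independent tools.
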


The proof needs the following observation:

\begin{lemma}\label{l.no_other_limits}
Let $\omega \in Y$ be such that the limits
\begin{equation}\label{e.two_limits}
\lim_{n \to \infty} \frac{1}{n} \left| \sum_{j=0}^{n-1} (-1)^j \varphi(T^j (\omega)) \right| 
\quad \text{and} \quad
\lim_{n \to \infty} \frac{1}{n} \left| \sum_{j=-n}^{-1} (-1)^j \varphi(T^j (\omega)) \right|
\end{equation}
both exist.
Then at least one of them equals $c$.
\end{lemma}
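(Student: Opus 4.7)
Abbreviate $S^+_n(\omega) := \sum_{j=0}^{n-1}(-1)^j\varphi(T^j\omega)$ and $S^-_n(\omega) := \sum_{j=-n}^{-1}(-1)^j\varphi(T^j\omega)$, and let $L^\pm := \lim_n |S^\pm_n|/n$ denote the two limits assumed to exist. The goal is to show $\max(L^+, L^-) = c$. I would proceed in three steps.

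First, establish the a priori bound $L^\pm \le c$. For any $w \in \mathcal{L}(Y)$ of length at least $\ell_k + 1$, the inclusion $Y \subseteq \Sigma_k$ (\cref{l.sofic}) provides a parsing $w = v_0 u_1 \cdots u_M v_1$ where each $u_i$ is a $k$-elementary word and $|v_0|, |v_1| \le \ell_k$. Since $|\theta(u_i)| = \theta(e_k)$ for every $k$-elementary word, and $M \le |w|/\ell_k$, the triangle inequality yields $|\theta(w)|/|w| \le 2(\ell_k + 1)/|w| + \theta(e_k)/\ell_k$. Applied to the prefixes $w = \omega_{[0, n-1]}$, letting $n \to \infty$ and then $k \to \infty$, \cref{l.c_freq} gives $L^+ \le c$; the backward case $L^- \le c$ is identical.

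Second, for each $k$ choose the $k$-elementary word straddling position $0$. Using \cref{l.sofic} together with the fact that $(k+1)$-elementary words decompose into $k$-elementary sub-blocks (formula~\eqref{e.next_word}), there exist integers $p_k \le 0 < q_k$ with $q_k - p_k \in \{\ell_k, \ell_k+1\}$, with $p_k$ nonincreasing and $q_k$ nondecreasing in $k$, such that $u_k := \omega_{[p_k, q_k - 1]}$ is $k$-elementary. In particular $q_k - p_k \to \infty$, so at least one of $q_k, |p_k|$ is unbounded. A direct reindexing of the definition of $\theta$ gives
\begin{equation*}
\theta(u_k) \;=\; (-1)^{p_k}\bigl(S^+_{q_k} + S^-_{-p_k}\bigr),
\end{equation*}
while $|\theta(u_k)| = \theta(e_k)$ in all four cases, thanks to the symmetries $\theta(e_k\, 0) = \theta(e_k)$ and $\theta(\overline{e_k}) = -\theta(e_k)$. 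Hence \cref{l.c_freq} yields
\begin{equation*}
\frac{|S^+_{q_k} + S^-_{-p_k}|}{q_k - p_k} \;\longrightarrow\; c \quad \text{as } k \to \infty.
\end{equation*}

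Third, conclude by case analysis. If both $q_k \to \infty$ and $|p_k| \to \infty$, then $|S^+_{q_k}|/q_k \to L^+$ and $|S^-_{-p_k}|/|p_k| \to L^-$, so by the triangle inequality,
\begin{equation*}
c \;=\; \lim_k \frac{|S^+_{q_k} + S^-_{-p_k}|}{q_k + |p_k|} \;\le\; \limsup_k \frac{L^+ q_k + L^- |p_k|}{q_k + |p_k|} \;\le\; \max(L^+, L^-).
\end{equation*}
Combined with the a priori bound $L^\pm \le c$, this forces $\max(L^+, L^-) = c$. If instead $|p_k|$ stays bounded, then $|S^-_{-p_k}|$ is bounded and the identity forces $|S^+_{q_k}|/q_k \to c$, giving $L^+ = c$; symmetrically, $q_k$ bounded gives $L^- = c$. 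I expect the most delicate point to be the rigorous construction of the central block $u_k$, which rests on the hierarchical structure of $Y$ beyond what is formalized in \cref{l.sofic} but should reduce to a compactness argument using that $\omega_{[-N, N]}$ is a subword of $e_{K(N)}$ with $K(N) \to \infty$.
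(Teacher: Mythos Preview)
Your argument is correct and takes a genuinely different route from the paper's. The paper works with a single $(k+1)$-elementary block $f_0 = \omega_{\ldbrack -p,q-1\rdbrack}$ straddling $0$, keeps only the longer of its two halves $\omega_{\ldbrack -p,-1\rdbrack}$ or $\omega_{\ldbrack 0,q-1\rdbrack}$, and then computes $|\theta(w)|/|w|$ for that half by explicitly dissecting it as a prefix or suffix of $e_{k+1}$ via the recursion \eqref{e.next_word}; this yields directly that the relevant one-sided average is $c+O(\varepsilon)$ along a subsequence. Your proof instead keeps the whole straddling block, exploits the clean identity $\theta(u_k)=(-1)^{p_k}(S^+_{q_k}+S^-_{-p_k})$, and squeezes via the a~priori bound $L^\pm\le c$ from Step~1. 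Your route avoids the structural analysis of prefixes of $e_{k+1}$ entirely, at the cost of proving the separate upper bound; the paper's route is more hands-on but self-contained in a single estimate.

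One simplification: you do \emph{not} need the nested monotonicity $p_k$ nonincreasing, $q_k$ nondecreasing that you flag as delicate. For each $k$, \cref{l.sofic} alone gives \emph{some} parsing of $\omega$ into $k$-elementary words, hence some block $u_k=\omega_{\ldbrack p_k,q_k-1\rdbrack}$ with $p_k\le 0<q_k$; no compatibility between different $k$ is required. Your Step~3 then goes through verbatim once phrased with subsequences: since $q_k-p_k\ge |e_k|\to\infty$, either $q_k\to\infty$ and $-p_k\to\infty$ (giving $c\le\max(L^+,L^-)$), or one of them has a bounded subsequence, along which the identity from Step~2 forces the other one-sided limit to equal $c$. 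This removes the only part you were worried about.
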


\begin{proof}
Let $\omega \in Y$, $\varepsilon>0$, and $n_0 > 0$ be arbitrary.
We will show that there exists $n > n_0$ such that either $\frac{1}{n} \left| \sum_{j=0}^{n-1} (-1)^j \varphi(T^j (\omega)) \right|$ or $\frac{1}{n}  \left| \sum_{j=-n}^{-1} (-1)^j \varphi(T^j (\omega)) \right|$ is $c + O(\varepsilon)$. Once we do that, the \lcnamecref{l.no_other_limits} will be proved. 

Choose $k>0$ satisfying the following conditions:
\begin{equation}\label{e.k_conditions}
|e_{k+1}| > 2n_0 \, , \quad
m_k > \frac{1}{\varepsilon} \, , \quad \text{and} \quad 
\left| \frac{\theta(e_k)}{|e_k|} - c \right| < \varepsilon \, .
\end{equation}
By \cref{l.sofic}, $\omega \in \Sigma_{k+1}$, that is, the bi-infinite word $\omega$ can be written as a concatenation $\cdots f_{-1} f_0 f_1 \cdots$ of $k+1$-elementary words $f_i$, where the zeroth position is somewhere in $f_0$. Write $f_0 = \omega_{\ldbrack -p,q-1 \rdbrack}$, where $p\ge 0$ and $q \ge 1$. 
Let
\begin{equation}
n \coloneqq \max(p,q)
\quad \text{and} \quad 
w \coloneqq 
\begin{cases}
\omega_{\ldbrack -p,-1 \rdbrack} &\text{if } n=p \, , \\
\omega_{\ldbrack 0,q-1 \rdbrack} &\text{if } n=q \, . 
\end{cases}
\end{equation}
Since $p + q = |f_0|$, 
we have 
\begin{equation}\label{e.at_least_half}
|w| = n \ge \frac{|f_0|}{2} \ge \frac{|e_{k+1}|}{2} > n_0 \, . 
\end{equation}
To complete the proof, we will check that 
$\frac{|\theta(w)|}{|w|} = c + O(\varepsilon)$. 

Consider the case $n=p$.
Then $w$ is a prefix of the $k+1$-elementary word $f_0$.
Since $\theta(\overline{w}) = - \theta(w)$, 
we can assume that $f_0$ is either $e_{k+1}$ or $e_{k+1} 0$.
Since $|w| \ge \frac{1}{2} |f_0|$, it follows from the recursive definition \eqref{e.next_word} that, by either adding or removing at most $|e_k|+2$ letters at the end of the word $w$, we can obtain a word of the form
\begin{equation}
\tilde{w} = e_k^{m_k} \, 0 \, e_k \, \overline{e_k}^r \, , \quad \text{where } r \ge 0 \, .
\end{equation}
Then 
\begin{equation}
|w| = 
(m_k+r)|e_k| + O(|e_k|) \, .
\end{equation}
Also, by properties~\eqref{e.theta_properties} we have 
$\theta(\tilde{w}) = (m_k-1+r)\theta(e_k)$ and 
$\big| \theta(w) - \theta(\tilde{w}) \big| \le |e_k| + 2$, so 
\begin{equation}
\theta(w) = (m_k+r) \theta(e_k) + O(|e_k|) \, .
\end{equation}
Then, using a standard Calculus estimate,
\begin{equation}
\frac{\theta(w)}{|w|} 
= \frac{\theta(e_k)}{|e_k|} + O \left(\frac{1}{m_k+r}\right) 
= c + O(\varepsilon) \, ,
\end{equation}
as we wanted to show. 
This completes the argument in the case $n=p$.
The case $n=q$ is handled in a similar manner. 
\end{proof}

\begin{proof}[Proof of \cref{t.our_Walters}]
Let $\nu_0$ and $\nu_1$ be as in \cref{l.double_ergodicity}.
Applying the ergodic theorem to the map $T^2$, the invariant measure $\nu_0$, and the function  $\psi = \varphi - \varphi \circ T$, we ensure the existence of points $\omega \in Y$ such that both limits \eqref{e.two_limits} exist and are equal to $\frac{1}{2} \int \psi \, d\nu_0$. 
Therefore $\int \psi \, d\nu_0 = \pm 2c$, by \cref{l.no_other_limits}.
Interchanging $\nu_0$ and $\nu_1$ if necessary, we can assume that the sign above is a $+$.
On the other hand, since $T_* \nu_0 = \nu_1$, we have
$\int \varphi \, d\nu_0 - \int \varphi \, d\nu_1 = \int \psi \, d\nu_0 = 2c$.
Then the proof of \cref{t.Walters} (formula \eqref{e.lambda}, actually) shows that \eqref{e.our_Walters} defines a Walters cocycle with Lyapunov exponent $c$. 
\end{proof}

\section{Questions}

In this paper, we showed that the hypothesis of H\"older regularity cannot be dropped from the results of \cite{DWG,Guy}. Nevertheless, we do not know the exact regularity threshold for the validity of such theorems. 

\begin{question}
What is the optimal modulus of regularity for a map $A$ as in \cref{t.main_SFT}?
\end{question}

To estimate the modulus of regularity of the maps $A$ arising from our construction, the first step is to analyze the growth of the time scales $m_k$ introduced in \cref{l.time_scale}. These appear to depend intricately on the choice of the Walters cocycle and especially on the underlying Veech-like map. While the remainder of our construction is largely explicit, carefully keeping track of all moduli of regularity would nonetheless involve considerable effort.

\medskip

A motivation for works such as DeWitt--Gogolev \cite{DWG} comes from the problem of \emph{periodic data rigidity}: to what extent do the cocycle products along periodic orbits determine the cohomology of the cocycle? We note that this general problem is  unsolved  for H\"older cocycles over hyperbolic dynamics: see \cite[{\S}13.4.1]{Sad24} for discussion. Under this perspective, the following question, suggested by one of the referees, is natural. Following \cite[Def.~2.3]{DWG}, we say that a linear cocycle has \emph{constant periodic data} if all probability measures supported on periodic orbits have the same Lyapunov exponents. 

\begin{question}
For a continuous cocycles over hyperbolic systems, does constant periodic data imply that the cocycle is monochromatic?
\end{question}

The author expects a negative answer. 
We note that Kalinin's periodic approximation theorem \cite[Theorem~1.4]{Kalinin} fails in the continuous class, as shown in \cite{Bochi_isolated}.

\medskip

The monochromatic cocycles constructed in this paper take values in the disconnected Lie group $\mathrm{SL}^\smallpm(2,\R)$. If we want examples taking values in a connected Lie group, we can use the fact that $\mathrm{SL}^\smallpm(2,\R)$ embeds into $\mathrm{SL}(d,\R)$, for any $d > 2$. 

We can also construct \emph{some} examples taking values in $\mathrm{SL}(2,\R)$, for instance:

\begin{proposition}
If $T = \sigma_4 \colon 4^\Z \to 4^\Z$ is the full shift on four symbols, then for any $\lambda_0>0$ we can find a map $A \colon 4^\Z \to \mathrm{SL}(2,\R)$ satisfying the conclusions of  \cref{t.main_SFT}.
\end{proposition}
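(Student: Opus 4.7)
The plan is to reduce to the main theorem on the full $2$-shift by using the standard conjugacy $\sigma_4\cong\sigma_2^2$. Let $\pi\colon 4^\Z\to 2^\Z$ be the $2$-block homeomorphism that turns each pair of consecutive bits into one of the four letters; it satisfies $\pi\circ\sigma_4=\sigma_2^2\circ\pi$. I apply \cref{t.main_SFT} to the full $2$-shift $\sigma_2$, which is an infinite mixing SFT, with target exponent $\lambda_0/2$. This yields a continuous $A_0\colon 2^\Z\to\mathrm{SL}^\smallpm(2,\R)$ that is monochromatic with exponent $\lambda_0/2$ and such that $(\sigma_2,A_0)$ is not uniformly hyperbolic. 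Because the $2$-shift is a mixing full shift (so the spectral decomposition in the proof of \cref{t.main_SFT} is trivial: $X_{1,1}=X$), the construction produces $A_0$ of the form $B\cdot H\cdot S$, where $B$ is the nonnegative antidiagonal Walters extension (hence $\det B\equiv-1$) and $H,S\in\mathrm{SL}(2,\R)$ are the factors in \eqref{e.H} and \eqref{e.final_perturb}. In particular $\det A_0\equiv-1$.

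Define the candidate $A\colon 4^\Z\to\mathrm{SL}(2,\R)$ by
\begin{equation*}
A(y)\coloneqq A_0(\sigma_2\pi(y))\,A_0(\pi(y)),
\end{equation*}
which is continuous with $\det A\equiv(-1)(-1)=+1$, so it is genuinely $\mathrm{SL}(2,\R)$-valued. The cocycle identity gives $A^{(n)}(y)=A_0^{(2n)}(\pi(y))$. Non-uniform hyperbolicity of $(\sigma_4,A)$ is immediate: uniform hyperbolicity of $(\sigma_4,A)$ would, via the conjugacy $\pi$, force $(\sigma_2^2,A_0^{(2)})$ and hence $(\sigma_2,A_0)$ to be uniformly hyperbolic, contradicting the choice of $A_0$.

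The main step, and the expected obstacle, is monochromaticity. Let $\mu$ be a $\sigma_4$-ergodic probability on $4^\Z$ and set $\mu_0\coloneqq\pi_*\mu$; this measure is $\sigma_2^2$-ergodic on $2^\Z$, and $\lambda(\sigma_4,A,\mu)=\lambda(\sigma_2^2,A_0^{(2)},\mu_0)$. If $\mu_0$ happens to be $\sigma_2$-invariant, then a standard extremality argument shows it is $\sigma_2$-ergodic, and the cocycle relation yields $\lambda(\sigma_2^2,A_0^{(2)},\mu_0)=2\lambda(\sigma_2,A_0,\mu_0)=\lambda_0$. Otherwise, $\mu_0'\coloneqq(\sigma_2)_*\mu_0\neq\mu_0$ is also $\sigma_2^2$-ergodic, and the average $\nu\coloneqq\tfrac12(\mu_0+\mu_0')$ is $\sigma_2$-invariant and $\sigma_2$-ergodic (any $\sigma_2$-ergodic decomposition of $\nu$ must refine the two-atom $\sigma_2^2$-ergodic decomposition $\{\mu_0,\mu_0'\}$, yet neither atom is $\sigma_2$-invariant). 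Monochromaticity of $A_0$ then gives $\lambda(\sigma_2,A_0,\nu)=\lambda_0/2$ and hence $\lambda(\sigma_2^2,A_0^{(2)},\nu)=\lambda_0$. Finally, the cocycle identities
\begin{equation*}
A_0^{(2n+1)}(x)=A_0(\sigma_2^{2n}x)\,A_0^{(2n)}(x)=A_0^{(2n)}(\sigma_2 x)\,A_0(x)
\end{equation*}
show that $\|A_0^{(2n)}(\sigma_2 x)\|$ and $\|A_0^{(2n)}(x)\|$ are uniformly comparable, so $\lambda(\sigma_2^2,A_0^{(2)},\mu_0')=\lambda(\sigma_2^2,A_0^{(2)},\mu_0)$, and the ergodic decomposition of $\nu$ then forces both exponents to equal $\lambda_0$.
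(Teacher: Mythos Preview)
Your proof is correct and follows the same strategy as the paper: pull back the square of a monochromatic $\mathrm{SL}^\smallpm(2,\R)$-cocycle over $\sigma_2$ (obtained from \cref{t.main_SFT} with exponent $\lambda_0/2$ and constant determinant $-1$) via the conjugacy $\sigma_4\cong\sigma_2^2$. The paper's proof is terser and simply asserts that the resulting cocycle ``has the required properties''; your ergodic-decomposition argument verifying that every $\sigma_2^2$-ergodic measure inherits exponent $\lambda_0$ fills in a detail the paper leaves implicit.
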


\begin{proof}
Let $\sigma_2$ be the full shift on two symbols. 
By \cref{t.main_SFT}, there exists $B \colon 2^\Z \to \mathrm{SL}^\smallpm(2,\R)$ such that $\lambda(\sigma_2,B,\mu) = \lambda_0/2$ for every $\sigma_2$-invariant ergodic measure $\mu$ and the cocycle $(\sigma_2,B)$ is not uniformly hyperbolic. The proof of the theorem shows that $B$ can be taken with constant  determinant $-1$. In particular, $B^{(2)}(x) = B(\sigma_2(x)) B(x)$ has determinant $1$. Note that $\sigma_2$ is a square root of $\sigma_4$; more precisely, there exists a homeomorphism $h \colon 4^\Z \to 2^\Z$ such that $h \circ \sigma_4 = \sigma_2^2 \circ h$. Therefore $A \coloneqq B^{(2)} \circ h$ has the required properties. 
\end{proof}

We fundamentally used the existence of a square root. On the other hand, $\sigma_2$ admits no square root, since it has a unique orbit of period $2$. Thus we ask:

\begin{question}
If $T = \sigma_2$ is the full shift on two symbols and $\lambda_0>0$, can we find a map $A \colon 2^\Z \to \mathrm{SL}(2,\R)$ satisfying the conclusions of \cref{t.main_SFT}?
\end{question}

\medskip

As indicated in the introduction, the ``uniformly nonuniformly hyperbolic'' example of Velozo Ruiz \cite[Theorem~4.1]{Velozo} was preceded by examples of \emph{derivative cocycles} with the same property: see \cite{CLR,Gogolev}. In fact, the problem of under which conditions the absence of uniform hyperbolicity for a diffeomorphism implies the existence of a non-hyperbolic measure has been extensively studied over the past two decades: see \cite{DYZ} and the references therein. We close this paper by asking whether monochromatic nonuniform hyperbolicity can be found in smooth dynamics:

\begin{question}
Does there exist a $C^1$-diffeomorphism $f$ of a compact smooth manifold $M$ admitting a non-hyperbolic homoclinic class $H$ such that the derivative cocycle restricted to $H$ is monochromatic?
\end{question}

By a theorem of \cite{CCGWY}, such examples form a meager set in the space $\mathrm{Diff}^1(M)$.


\bigskip

\noindent \textbf{Acknowledgements.} 
I thank Omri Sarig and Scott Schmieding for invaluable help with the construction of a Veech-like map, and Jon DeWitt and Andrey Gogolev for useful comments. 
I am grateful to the referees for their careful reading of the paper and many suggestions to improve the exposition. 


\end{document}